\crefname{subsection}{subsection}{subsections}
\theoremstyle{plain}
\newtheorem{theorem}{Theorem}[section]
\newtheorem{lemma}[theorem]{Lemma}
\newtheorem{proposition}[theorem]{Proposition}
\theoremstyle{definition}
\newtheorem{definition}[theorem]{Definition}
\newtheorem{remark}[theorem]{Remark}
\theoremstyle{remark}
\newcommand{\N}{\mathbb N}
\newcommand{\R}{\mathbb R}
\newcommand{\tr}{{\rm Tr\,}}
\renewcommand{\div}{{\rm div}}
\newcommand{\dist}{{\rm dist}}
\newcommand{\Lip}{{\rm Lip}}
\newcommand{\hno}{{\mathcal H}^{N-1}}
\newcommand{\e}{\varepsilon}
\renewcommand{\o}{\Omega}
\def\ca{\mathbbmss{1}}
\newcommand{\f}{\mathcal{F}}
\newcommand{\h}{\mathcal{H}}
\newcommand{\average}{{\mathchoice {\kern1ex\vcenter{\hrule height.4pt
width 6pt
depth0pt} \kern-9.7pt} {\kern1ex\vcenter{\hrule height.4pt width 4.3pt
depth0pt}
\kern-7pt} {} {} }}
\DeclareMathOperator*{\essinf}{\mathrm{essinf}}
\title[Relaxation of bulk and contact energies]{On the relaxation of functionals with contact terms on non-smooth domains}
\author{Riccardo Cristoferi, Giovanni Gravina}
\address{Radboud University \\ Department of Mathematics \\ Nijmegen \\ Netherlands}
\email{riccardo.cristoferi@ru.nl}
\address{Department of Mathematical Analysis\\Faculty of Mathematics and Physics\\ 
Charles University\\Prague\\ Czech Republic}
\email {gravina@karlin.mff.cuni.cz} 
\keywords{Lower semicontinuity, relaxation, bulk and surface energies}
\subjclass[2010]{49J45, 49Q20, 26B30}
\date{\today}
\begin{document}

\begin{abstract}
We provide the integral representation formula for the relaxation in $BV(\o; \R^M)$ with respect to strong convergence in $L^1(\o; \R^M)$ of a functional with a boundary contact energy term. This characterization is valid for a large class of surface energy densities, and for domains satisfying mild regularity assumptions. Motivated by some classical examples where lower semicontinuity fails, we analyze the extent to which the geometry of the set enters the relaxation procedure.
\end{abstract}

\maketitle
\tableofcontents

\section{Introduction}
Contact energies play an important role in numerous physical and industrial applications, where boundary effects are modeled by a surface integral of the form
\begin{equation}
\label{tau-prot}
\int_{\partial \o}\tau(x, \tr u(x))\,d\hno.
\end{equation}
An important example can be found in the context of the van der Waals--Cahn--Hilliard theory of liquid-liquid phase transitions (see \cite{Cahn-CPW} and \cite{Gur}), where it is customary to consider a contact term as in \eqref{tau-prot} together with a competing bulk energy. A prototype for the energy functionals studied in this case (see \cite{MR921549}) is given by
\begin{equation}
\label{F-modica}
\f(u) \coloneqq \sigma |Du|(\o) + \int_{\partial \o} \tau(x, \tr u(x))\,d\hno.
\end{equation}
Here $\o$ is a bounded open subset of $\R^N$ with Lipschitz continuous boundary, $\tr$ denotes the trace operator on $\partial \o$, and $\hno$ is the $N - 1$-dimensional Hausdorff measure. Moreover, the phase variable $u \in BV(\o)$ represents the density of the fluid (see \Cref{sec-BV}), $\sigma$ is a positive constant, and $\tau \colon \partial \o \times \R \to \R$ is a given function that encodes the energetic interaction per unit area with the boundary of the container $\o$. An energy of this form was considered by Modica (see Proposition 1.2 in \cite{MR921549}), who proved that the functional $\f$ is lower semicontinuous in $BV(\o)$ with respect to the strong topology of $L^1(\o)$, provided the following assumptions are satisfied: 
\begin{enumerate}[label=(H.\arabic*), ref=H.\arabic*]
\item \label{H1} the domain $\o$ is of class $C^2$; 
\item \label{H2} the density function $\tau$ satisfies 
\[
|\tau(x, p) - \tau(x, q)| \le \sigma |p - q|
\]
for all $x \in \partial \o$ and for all $p,q \in \R$.
\end{enumerate}
Additionally, Modica showed that $\f$ may fail to be lower semicontinuous if $\o$ is only a Lipschitz domain, or if the Lipschitz constant of $\tau$ is strictly larger than $\sigma$. This was accomplished with the following two strikingly simple examples (see Remark 1.3 in \cite{MR921549}).
\begin{enumerate}[label=(E.\arabic*), ref=E.\arabic*]
\item \label{E1} Let $\o \coloneqq (0,1)^2$, $\sigma = 1$, and $\tau(x, p) \coloneqq \lambda p$ with $\lambda < - \sqrt{2}/2$. For $n \in \N$, let $u_n$ be defined via 
\[
u_n(x_1, x_2) \coloneqq
\left\{
\arraycolsep=5pt\def\arraystretch{1.5}
\begin{array}{ll}
0 & \text{ if } x_1 + x_2 \ge \frac{1}{n}, \\
n & \text{ if } x_1 + x_2 < \frac{1}{n}.
\end{array}
\right.
\]
Then, as one can readily check, $u_n \to 0$ in $L^1(\o)$ and $\f(u_n) = \sqrt{2} - 2\lambda < 0 = \f(0)$.
\item \label{E2} Let $\o \coloneqq \{x \in \R^2 : |x| < 1\}$, $\sigma = 1$, and $\tau(x, p) \coloneqq \lambda|p|$ with $\lambda > 1$. For $n \in \N$, let $u_n$ be defined via 
\[
u_n(x) \coloneqq \min \{|x|, (n - 1)(1 - |x|)\}.
\]
Then $u_n \to u$ in $L^1(\o)$ where $u(x) \coloneqq |x|$, and by means of a direct computation we see that $\f(u_n) - \f(u) \to 2(1 - \lambda) < 0$.
\end{enumerate}
Motivated by these observations, the aim of this paper is to obtain an integral representation formula for the lower semicontinuous envelope of $\f$ when the hypothesis in (\ref{H1}) and (\ref{H2}) are significantly weakened. In particular, we show that for a large class of domains (see \Cref{a-C1}), the roughness of the set is reflected in a growth condition for $\tau$, but poses no additional restriction on the regularity than the one required in (\ref{H2}). Thus, our work provides with the precise understanding of the interaction between the regularity of the domain and the assumptions on the density $\tau$ in the relaxation procedure. We refer to \Cref{thm:main} for the exact statement of our results. 

Regarding \eqref{H1}, let us remark that while Modica's result is stated for $C^1$ domains, to the best of our understanding, the proof presented actually requires the set to be of class $C^2$ in order to show lower semicontinuity along sequences which are not uniformly bounded in $BV(\o)$. Further insight is provided by comparing the proof of \Cref{liminf-C2} with the first step in the proof of \Cref{liminf-C1pw}. \\

It is worth noting that the study of conditions ensuring the lower semicontinuity of energy functionals with contact terms is of particular interest also in capillarity problems (for an overview on the mathematical study of capillarity phenomena see, for instance, \cite{MR0484001}). Here we only mention the contributions that are closer to the spirit of this work. In the classical paper \cite{MR336507}, Emmer considered the energy functional
\begin{equation}
\label{C-par}
\mathcal{C}(u) \coloneqq \int_\o \sqrt{1 + |Du|^2} + \int_\o u^2(x) \,dx + \nu \int_{\partial\o} \tr u(x) \,d\hno,
\end{equation}
defined for $u \in BV(\o)$, and established existence of a solution to the minimization problem for $\mathcal{C}$ under the condition that
\begin{equation}
\label{Emmer}
|\nu| < \frac{1}{\sqrt{1 + L_{\partial \o}^2}},
\end{equation}
where $L_{\partial \o}$ is the Lipschitz constant of $\o$. Notice that the first bulk contribution on the right-hand side of \eqref{C-par} is the classical non parametric area functional, which is defined via
\begin{multline*}
\int_{\o} \sqrt{1 + |Du|^2} \coloneqq \sup \bigg\{\int_{\o} (\varphi_0(x) + u(x) \div \varphi(x))\,dx : \\ (\varphi_0, \varphi) \in C^{\infty}_c(\o; \R^{N + 1}) \text{ and } \sum_{i = 0}^N\varphi_i(x)^2 \le 1 \bigg\}.
\end{multline*}
Furthermore, we recall that for $N = 2$ minimizers of the energy $\mathcal{C}$ correspond to capillary surfaces meeting the boundary of the container at an angle $\theta$, determined by the relation $\nu = - \cos \theta$. 

As previously remarked by Finn and Gerhardt (see \cite{MR434137}), the condition identified by Emmer is rather restrictive and forbids the treatment of several cases where the existence of a solution can be predicted on the basis of physical arguments. In particular, while \eqref{Emmer} correctly reveals that corner singularities on the boundary of the domain may pose an obstruction to the existence of solutions, one expects a qualitatively different result for small and large included angles, that is, depending on whether the tip of the corner points outside or inside the domain, respectively. This is the central issue addressed in \cite{MR434137}, where the authors proved existence of minimizers for Lipschitz domains satisfying an interior ball condition. A refinement of this result was obtained by Tamanini in \cite{MR483992}. Comparable results to those of Finn and Gerhardt were also obtained by Giusti in \cite{MR482506}, where the author extended the study of the capillarity problem to the case of mixed boundary conditions on two relatively open subsets of $\partial \o$. 

Finally, the parametric case, namely the study of the functional
\[
\mathcal{C}_P(E) \coloneqq (1-\lambda)|D\ca_E|(\o) + \lambda |D\ca_E|(\R^N) + \int_E \rho(x) dx,
\]
where $E \subset \R^N$ is a set of finite perimeter, $\o$ is a Lipschitz domain, and $\rho \in L^1(\o)$, was treated by Massari and Pepe in \cite{MR388973} (for the case of three fluids, see \cite{MR761749}). In this framework the existence of a solution of the volume constrained minimization problem for $\mathcal{C}_P$ was established under the assumption that $\lambda \in [0,1]$. This condition is in accordance with \eqref{H2}. \\

More recently, Fonseca and Leoni in \cite{MR1656999} considered the energy functional 
\[
\mathcal{G}(u) \coloneqq \int_{\o} h(x, u(x), \nabla u(x))\,dx + \int_{\partial \o}\tau(x, \tr u(x))\,d\hno,
\]
defined for vector-valued functions $u \in W^{1,1}(\o;\R^M)$, and proved an integral representation formula for its relaxation in $BV(\o; \R^M)$. The main assumptions required on $h$ are that $h(x, u, \cdot)$ is quasiconvex and satisfies a linear growth condition of the form 
\[
g(x, p)|\xi| \le h(x, p, \xi) \le Cg(x, p)(1 + |\xi|)
\]
for every $x \in \o$, $p \in \R^M$, and $\xi \in \R^{M \times N}$. Furthermore, $\tau \in C(\overline{\o} \times \R^M) \cap C^1(\o \times \R^M)$ is such that
\begin{equation}
\label{tau-FL}
|\nabla_p \tau(x, p)| \le g(x, p)
\end{equation}
for all $x \in \o$ and all $p \in \R^M$. Notice that in this general framework condition \eqref{tau-FL} plays the role of \eqref{H2}. Additionally, their proof hinges on an a Gauss--Green formula (see Lemma 1.2 in \cite{MR1656999}; see also \Cref{FL-lem} below) which  is shown to hold for regular domains with boundary of class $C^2$, and allows to rewrite the boundary term as a bulk contribution. The newly obtained problem can be then treated with the techniques developed in \cite{MR1177778} and \cite{MR1218685} (see also \cite{MR1183605}). 

We also mention here that in \cite{MR1885119}, Bouchitt\'e, Fonseca, and Mascarenhas obtained an integral representation formula for the relaxation in $BV$ of a functional which comprises of a general bulk term and an interfacial energy on a fixed hypersurface $\Sigma \subset \overline{\o}$. The surface densities considered in \cite{MR1885119} are nonnegative and satisfy a certain growth condition, in addition to mild regularity assumptions. Moreover, these are allowed to take the value $+ \infty$ to include in the theory the treatment of variational problems with constraints. \\

To the best of our knowledge, what is missing in the literature is a study of the relaxation of the functional $\f$ (see \eqref{F-modica}) when the contact energy $\tau$ is possibly unbounded from below and fails to satisfy \eqref{H2}. This issue is addressed in the present paper. In addition, as illustrated by Modica's example \eqref{E1} and by the several conditions suggested for the study of capillary surfaces, for the energies we consider, various local properties of $\partial \o$ must also affect the relaxation procedure. Prior to this work, the interaction of these effects has not been investigated in a framework where lower semicontinuity fails. 

In our main result (see \Cref{thm:main}), we consider the case where the boundary of the domain $\o$ is almost of class $C^1$, i.e., of class $C^1$ outside of a closed subset of $\hno$ measure zero, and the contact energy $\tau$ is only Carath\'{e}odory (or even a normal integrand, see \Cref{sec-normal}). We then obtain an integral representation formula for the effective energy $\overline{\f}$, that is, the relaxed functional associated to $\f$, under the assumption that $\tau$ satisfies a lower bound which encodes a geometric restriction. This condition can be seen as a natural generalization of that identified by Giusti in \cite{MR482506}. 

%The result improves the state of the art knowledge in that it weakens both the hypothesis on the domain $\o$ and on the contact energy $\tau$.

%%%%%%%%%%%%%%%%%%%%%%%%%%%%%%%%%%%%%%%%%%%%
%%%%%%%%%%%%%%%%%%%%%%%%%%%%%%%%%%%%%%%%%%%%

\subsection{Statement of the main result}

Let $\o$ be a bounded open subset of $\R^N$, $N \ge 2$, with Lipschitz continuous boundary. Furthermore, let $\sigma$ be a positive constant and $\tau \colon \partial \o \times \R^M \to [-\infty, \infty)$ a given Carath\'eodory function, $M \ge 1$. Throughout the paper, we assume that there are two nonnegative functions $c, L$ such that $c \in L^1(\partial \o)$, $L$ is continuous, and 
\begin{equation}
\label{tau-LB}
\tau(x, p) \ge - c(x) - L(x)|p|
\end{equation}
for $\hno$-a.e.\@ $x \in \partial \o$ and for all $p \in \R^M$. We then define 
\begin{equation}
\label{F}
\f(u) \coloneqq
\left\{
\begin{array}{ll}
\displaystyle \sigma \int_{\o}|\nabla u(x)|\,dx + \int_{\partial \o} \tau(x, \tr u(x))\, d\hno & \text{ if } u \in W^{1,1}(\o; \R^M), \\
&\\
\infty & \text{ otherwise in } L^1(\o; \R^M).
\end{array}
\right.
\end{equation}
Here and in the following we use $\tr$ to denote the trace operator on $\partial \o$. Before we proceed, let us remark that $\f$ is well defined and furthermore $\f \colon L^1(\o; \R^M) \to (-\infty, \infty]$. Indeed, since the trace space of $W^{1,1}(\o; \R^M)$ can be identified with $L^1(\partial \o; \R^M)$, from \eqref{tau-LB} we readily see that
\[
\f(u) \ge \int_{\partial \o}\tau(x, \tr u(x))\,d\hno \ge - \|c\|_{L^1(\partial \o)} - \|L\|_{L^\infty(\partial \o)}\int_{\partial \o}|\tr u(x)|\,d\hno > - \infty.
\]
However, since we do not prescribe any control on $\tau$ from above, it is worth noting that $\f$ is not necessarily finite on $W^{1,1}(\o; \R^M)$. To see this, consider for example $\tau(x, p) \coloneqq |p|^2$. Then $\f(u) = \infty$ for every $u \in W^{1,1}(\o; \R^M)$ such that $\tr u \in L^1(\partial \o; \R^M) \setminus L^2(\partial \o; \R^M)$.

As previously remarked (see \eqref{E1} and \eqref{E2}), without additional assumptions on $\partial \o$ and $\tau$, the functional $\f$ fails, in general, to be lower semicontinuous with respect to the strong topology of $L^1(\o; \R^M)$. Thus, we are lead to consider the relaxed functional $\overline{\f} \colon L^1(\o; \R^M) \to [-\infty, \infty]$, which is classically defined via
\begin{equation}
\label{Fbar}
\overline{\f}(u) \coloneqq \inf \left\{ \liminf_{n \to \infty} \f(u_n) : u_n \to u \text{ in } L^1(\o;\R^M)\right\}.
\end{equation}

The main purpose of this paper is to provide an integral representation formula for $\overline{\f}$. To this end, consider 
\begin{equation}
\label{eq:sigma}
\hat{\tau}(x, p) \coloneqq \inf \left\{\tau(x, q) + \sigma  |p - q| : q \in \R^M \right\},
\end{equation}
and observe that if $\|L\|_{L^{\infty}(\partial \o)} \le \sigma$ then \eqref{eq:sigma} defines a Carath\'eodory function on $\partial \o \times \R^M$ which also satisfies the lower bound \eqref{tau-LB}. Furthermore, notice that for $\hno$-a.e.\@ $x \in \partial \o$, the function $\hat{\tau}(x, \cdot) \colon \R^M \to \R$ coincides with the so-called $\sigma$-Yosida transform of $\tau(x, \cdot)$, and corresponds therefore to the greatest $\sigma$-Lipschitz function below $\tau(x, \cdot)$. Thus, we define
\begin{equation}
\label{H}
\h(u) \coloneqq \left\{
\begin{array}{ll}
\displaystyle  \sigma |Du|(\o) + \int_{\partial \o} \hat{\tau}(x,\tr u(x))\,d\hno & \text{ if } u \in BV(\o; \R^M), \\
&\\
\infty & \text{ otherwise in } L^1(\o; \R^M).
\end{array}
\right.
\end{equation}

Before we state our main result, we give two definitions. The first is that of open set with boundary almost of class $C^1$ (compare it with the definition in Section 9.3 of \cite{MR2976521}).

\begin{definition}
\label{a-C1}
Let $\o$ be an open subset of $\R^N$. We say that $\partial \o$ is \emph{almost of class $C^1$} if it is Lipschitz continuous and there exists a closed set $\mathcal{S} \subset \partial \o$ such that 
\[
\hno(\mathcal{S}) = 0,
\]
and with the property that for every $z \in \partial \o \setminus \mathcal{S}$ there exist $R > 0$, $i \in \{1, \dots, N\}$, and a function $f \colon \R^{N - 1} \to \R$ of class $C^1$ such that either 
\[
\o \cap B(z, R) = \{x \in B(z, R) : x_i > f(x_i')\}
\]
or 
\[
\o \cap B(z, R) = \{x \in B(z, R) : x_i < f(x_i')\},
\]
where $x'_i$ is the point of $\R^{N - 1}$ obtained by removing the $i$-th entry, namely $x_i$, from $x$.
\end{definition}

\begin{remark}
Note that the class of open sets given in \Cref{a-C1} includes all Lipschitz domains with boundary piecewise of class $C^1$. Indeed, this subclass corresponds to case where the singular set $\mathcal{S}$ satisfies $\mathcal{H}^{N - 2}(\mathcal{S}) < \infty$.
\end{remark}

Next, we introduce the object that encodes the influence of the geometry of $\o$ (see Definition 1 in \cite{MR555952}; see also \cite{MR482506}).

\begin{definition}
\label{q-def}
Let $\o$ be an open subset of $\R^N$ with Lipschitz continuous boundary. For $x \in \partial \o$, we define 
\[
q_{\partial\o}(x) \coloneqq  \lim_{\rho \to 0^+} \sup\left\{ \frac{1}{|D\ca_E|(\o)} \int_{\partial \o}\ca_E(x)\,d\hno  : E \subset B(x, \rho),\ \mathcal{L}^N(E) > 0,\ |D\ca_E|(\o) < \infty \right\},
\]
where $\ca_E$ denotes the characteristic function of the set $E$.
\end{definition}

Our main result reads as follows.

\begin{theorem}
\label{thm:main}
Let $\o$ be an open bounded subset of $\R^N$ with Lipschitz continuous boundary.
Given a nonnegative function $c \in L^1(\partial \o)$, $\sigma > 0$, and a continuous function $L \colon \partial \o \to [0,\infty)$ such that $\|L\|_{L^\infty(\partial\o)} \le \sigma$, let $\tau \colon \partial \o \times \R^M \to [- \infty, \infty)$ be a Carath\'eodory function as in \eqref{tau-LB}. Moreover, let $\f$, $\overline{\f}$, and $\h$ be given as in \eqref{F}, \eqref{Fbar}, and \eqref{H}, respectively. Assume furthermore that 
\begin{equation}
\label{F<infty}
\f (\tilde{u}) < \infty
\end{equation}
for some $\tilde{u} \in W^{1,1}(\o; \R^M)$. Then, the following statements hold: 
\begin{itemize}
\item[$(i)$] if $\partial \o$ is of class $C^2$ then $\overline{\f}(u) = \h(u)$ for all $u \in BV(\o; \R^M)$;
\item[$(ii)$] if $\partial \o$ is almost of class $C^1$ and there exists $\e_0 > 0$ such that
\begin{equation}
\label{upperbound}
L(x) q_{\partial \o}(x) \le (1 - 2\e_0) \sigma
\end{equation} 
holds for all $x\in\partial\o$, then $\overline{\f}(u) = \h(u)$ for all $u \in L^1(\o; \R^M)$.
\end{itemize}
\end{theorem}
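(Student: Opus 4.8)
The plan is to establish the two inequalities $\overline{\f} \le \h$ and $\overline{\f} \ge \h$ separately. A useful preliminary reduction is that, since $\hat{\tau}(x,\cdot)$ is the greatest $\sigma$-Lipschitz function below $\tau(x,\cdot)$ (see \eqref{eq:sigma}), we have $\hat{\tau} \le \tau$, and therefore $\f \ge \h$ pointwise on $W^{1,1}(\o;\R^M)$; as $\f = \infty$ on the complement, this gives $\h \le \f$ on all of $L^1(\o;\R^M)$. Consequently, for the lower bound it is enough to show that $\h$ is lower semicontinuous with respect to strong $L^1$ convergence, for then $\h$ is an $L^1$-lower semicontinuous functional below $\f$, hence below its relaxation, i.e.\@ $\h \le \overline{\f}$. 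The advantage of this reduction is that the \emph{liminf} inequality only has to be proven for the Carath\'eodory, \emph{$\sigma$-Lipschitz} surface density $\hat{\tau}$.

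For the upper bound $\overline{\f} \le \h$, I would construct a recovery sequence for a given $u \in BV(\o;\R^M)$ with $\h(u) < \infty$ in two steps. First, approximate $u$ by $W^{1,1}$ (or smooth) functions $v_k$ converging to $u$ strictly in $BV$, so that $\sigma|Dv_k|(\o) \to \sigma|Du|(\o)$ and the traces $\tr v_k \to \tr u$ in $L^1(\partial\o;\R^M)$; such approximations are classical. Second, realize the infimal convolution defining $\hat{\tau}$ by inserting a thin boundary layer of width $\delta \to 0$ in which the trace is continuously deformed from $\tr v_k$ to a measurably chosen, near-optimal competitor $q(x)$ in \eqref{eq:sigma}; the existence of such a selection is guaranteed by the normal integrand theory recalled in \Cref{sec-normal}. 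A direct computation shows that the bulk cost of this deformation is asymptotically $\sigma \int_{\partial\o} |\tr v_k - q|\,d\hno$, while the boundary integral contributes $\int_{\partial\o}\tau(x,q(x))\,d\hno$, so that the total energy along the layer converges to $\int_{\partial\o}\hat{\tau}(x,\tr u)\,d\hno$; a diagonal argument then produces the desired sequence. The assumption \eqref{F<infty} is used to initiate the construction from a configuration of finite energy.

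For the lower bound I would distinguish the two regimes. When $\partial\o$ is of class $C^2$, I would follow the Gauss--Green approach: using the extension of the outer normal available for $C^2$ domains (see \Cref{FL-lem}), the surface integral $\int_{\partial\o}\hat{\tau}(x,\tr u)\,d\hno$ can be rewritten as a bulk integral, turning $\h$ into a convex functional of linear growth whose $L^1$-lower semicontinuity follows from classical results; here the $\sigma$-Lipschitz bound on $\hat{\tau}$ is precisely what guarantees the correct structure of the resulting bulk integrand. This is the content of \Cref{liminf-C2} and yields statement $(i)$ on $BV(\o;\R^M)$.

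When $\partial\o$ is only almost of class $C^1$ the Gauss--Green device is unavailable, and the argument, carried out in \Cref{liminf-C1pw}, proceeds differently; this is the step I expect to be the main obstacle. The \textbf{first} difficulty is coercivity: because $\tau$ may be very negative, a sequence with $\f(u_n)$ bounded need not be bounded in $BV$, and Modica's example \eqref{E1} shows this failure is genuine exactly when the geometry is unfavorable. The role of \Cref{q-def} and of hypothesis \eqref{upperbound} is to repair this: covering $\partial\o$ by small balls and using the very definition of $q_{\partial\o}$, one obtains a localized trace inequality in which the surface term is controlled by $(1 - 2\e_0)\sigma$ times the local total variation, leaving a coercive remainder of order $\e_0\sigma|Du_n|(\o)$. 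This yields a uniform $BV$ bound along sequences of bounded energy and, as a by-product, shows that $\overline{\f}(u) = \infty = \h(u)$ whenever $u \in L^1(\o;\R^M) \setminus BV(\o;\R^M)$, which is why statement $(ii)$ holds on all of $L^1(\o;\R^M)$. The \textbf{second} difficulty is that the trace operator is not continuous under weak-$*$ $BV$ convergence; this is handled by localizing the boundary integral, exploiting the local subgraph structure at points of $\partial\o\setminus\mathcal{S}$ to pass to the limit in the (now Lipschitz) surface density, and absorbing the contribution near the $\hno$-negligible singular set $\mathcal{S}$ into the coercivity margin furnished by $\e_0$. Combining the resulting liminf inequality with the upper bound gives $\overline{\f} = \h$, completing the proof.
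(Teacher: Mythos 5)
Your overall architecture is sound and close to the paper's: the limsup is proved exactly as you describe (strict approximation, a measurable near-optimal selection for the Yosida infimum, and a boundary layer built from a Gagliardo-type extension; see \Cref{limsup-prop} and \Cref{IF-lem}), and your treatment of the almost-$C^1$ liminf (coercivity from \Cref{local-AG} under \eqref{upperbound}, localization, sharp local trace constants at regular points, absorption of the singular set into the $\e_0$ margin) is precisely \Cref{lem:cmpt} and \Cref{liminf-C1pw}. Your reduction of the liminf inequality to lower semicontinuity of $\h$ is a legitimate reformulation: the paper does not phrase it this way, but its key estimate $\tau(x,\tr u_n(x))-\hat\tau(x,\tr u(x))\ge -\sigma|\tr u_n(x)-\tr u(x)|$ is exactly the $\sigma$-Lipschitz bound you would use. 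Note only that in case $(i)$ the semicontinuity of $\h$ holds merely at limit points $u\in BV(\o;\R^M)$ (the one-dimensional $\log$ example in the introduction shows it fails on $L^1$ when $\|L\|_{L^\infty(\partial\o)}=\sigma$), so you cannot literally invoke ``$\h$ is $L^1$-lower semicontinuous''.

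There are two concrete gaps. First, your $C^2$ argument is not the paper's and, as stated, does not work: rewriting $\int_{\partial\o}\hat\tau(x,\tr u(x))\,d\hno$ as a bulk integral via \Cref{FL-lem} is the Fonseca--Leoni device, and it requires $\hat\tau$ to be (at least) $C^1$ jointly in $(x,p)$ so that the chain rule applies to $x\mapsto\hat\tau(x,u(x))$; here $\hat\tau$ is only Carath\'eodory, measurable in $x$, so no such bulk representation is available. The paper instead uses \Cref{FL-lem} only once, applied to the scalar function $|u_n-u|$, to obtain the trace inequality of \Cref{trace-C2} with leading constant exactly $1$ and a controlled zero-order term; combined with the $\sigma$-Lipschitz bound on $\hat\tau$ and a boundary-layer localization $V_\gamma$, this yields \Cref{liminf-C2} without any uniform $BV$ bound on the sequence (which is why $(i)$ needs no hypothesis of the form \eqref{upperbound}). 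Second, in the limsup your ``direct computation'' showing the layer costs asymptotically $\sigma\int_{\partial\o}|\tr v_k(x) - q(x)|\,d\hno$ silently uses that a prescribed boundary datum $g$ can be lifted to $W^{1,1}(\o;\R^M)$ with gradient mass at most $(1+\e)\int_{\partial\o}|g(x)|\,d\hno$; with a generic Lipschitz extension constant $C>1$ the construction would realize the $C\sigma$-Yosida transform instead of $\hat\tau$. This sharp constant is exactly the content of \Cref{giusti-C1} for (almost) $C^1$ boundaries and is one of the genuinely technical points of the proof; it, together with the integrability of the selection $q(\cdot)$ (which the paper secures via \Cref{scorza-dragoni}, the bound $|q|\le R_\eta$ on a compact set, and the reduction to $\|L\|_{L^\infty(\partial\o)}<\sigma$ in \Cref{simplifications}), should be made explicit rather than absorbed into a direct computation.
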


\begin{remark}
In \Cref{sec-normal} we present an important extension of \Cref{thm:main} that allows us to consider surface densities $\tau$ which are not necessarily continuous in the second variable. To be precise, under a mild integrability condition, we prove a representation formula for the relaxation in $BV$ of the functional $\f$ when $\tau$ is a Borel function which is only upper semicontinuous as a function of the variable $p$.
\end{remark}

\subsection{Discussion of the assumptions and additional remarks}
Let us now comment on the main assumptions in \Cref{thm:main}. We begin by observing that the local behavior of $\partial \o$ enters the relaxation procedure through condition \eqref{upperbound}. This in turn can be understood as a restriction on $L$, and therefore (see \eqref{tau-LB}) on the class of surface densities for which the representation formula 
\[
\overline{\f} = \h
\]
holds. It is important to notice, however, that the geometry of the set has no effect on the regularization parameter in the Yosida transform $\hat{\tau}$ (see \eqref{eq:sigma}).

Additionally, we note that condition \eqref{upperbound} is required in order to apply the weighted trace inequality
\[
\int_{\partial \o} s(x)|\tr u(x)|\,d\hno \le (1 - \e)|Du|(\o) + C \int_{\o}|u(x)|\,dx,
\]
which holds for every $u \in BV(\o;\R^M)$ provided that, for $\e > 0$, the function $s \colon \partial \o \to [0, \infty)$ is continuous and satisfies
\begin{equation}
\label{trace-compare}
s(x)q_{\partial\o}(x) \le 1 - 2 \e.
\end{equation}
Here $C$ is a positive constant which only depends on $\o$, $\e$, and $s$. Compare, indeed, \eqref{upperbound} with \eqref{trace-compare}. This trace inequality was first obtained by Giusti (see Lemma 1.2 in \cite{MR482506}; see also \Cref{local-AG} below) and constitutes one of the key tools in our proof of the liminf inequality. It is worth noting also that for domains with boundary of class $C^2$ we use a sharper version of this inequality (see \Cref{trace-C2}) which, roughly speaking, allows to take $\e_0 = 0$ in \eqref{upperbound}. We refer to \Cref{rem-sharp} for more details. \\

Next, we observe that the lower bound \eqref{tau-LB}, together with the assumption that $\|L\|_{L^\infty(\partial\o)}\leq \sigma$, is in some sense optimal in order to have a nontrivial result. Indeed, assume that $\tau \colon \partial \o \times \R^M \to \R$ is a Carath\'eodory function such that
\[
\essinf_{x\in\partial\o} \liminf_{|p|\to\infty} \frac{\tau(x,p)}{|p|} < -\sigma.
\]
Then, as one can readily check, for each $u \in L^1(\o;\R^M)$ it is possible to find a sequence $\{u_n\}_{n \in \N}$ of functions in $W^{1,1}(\o;\R^M)$ with $u_n \to u$ in $L^1(\o; \R^M)$ and such that $\f(u_n) \to -\infty$. In particular, this implies that $\overline{\f} \equiv -\infty$.
\\

It is interesting to observe that in general there is no compactness for energy bounded sequences. Indeed, if for instance $\tau$ is bounded from above, the sequence given by functions of the form $u_n \equiv n$, for $n \in \N$, has uniformly bounded energy, but no convergent subsequence. A more critical loss of compactness in $BV$ occurs if the inequality $\|L\|_{L^{\infty}(\partial \o)} \le \sigma$ fails to be strict. To see this, fix $N = M = 1$, $\sigma = 1$, and consider $\o \coloneqq (0,1)$ and $\tau(x, p) \coloneqq p$. For $n \in \N$, let $u_n$ be defined via
\[
u_n(x) \coloneqq
\left\{
\begin{array}{ll}
\log x & \text{ for } x \in \left(\frac{1}{n}, 1 \right),\\
& \\
\log \frac{1}{n} & \text{ for } x \in \left(0, \frac{1}{n} \right).
\end{array}
\right.
\]
Then $u_n \in W^{1,1}(0,1)$ and by means of a direct computation we see that
\[
\f(u_n) = \int_0^1 |u_n'(x)|\,dx + u_n(1) + u_n(0) = 0.
\]
In particular, since $u_n \to \log$ in $L^1(0,1)$, this shows that $\overline{\f}$ can be finite even for functions in $L^1(0,1) \setminus BV(0,1)$. For this reason, in the case of domains with boundary of class $C^2$, we only provide an integral representation in $BV(\o;\R^M)$. We remark, however, that condition \eqref{upperbound} allows us to obtain a uniform bound on the gradients of energy bounded sequences. This is achieved in \Cref{lem:cmpt} and prevents the situation described above from happening. Hence, in this case we can show that the representation formula $\overline{\f} = \h$ holds in $L^1(\o;\R^M)$.
\\

%\orange{
%Note that in general compactness does not hold for this problem. Indeed, if for instance $\tau$ is bounded from above, the sequence $u_n\equiv n$ for $n\in\N$ has uniformly bounded energy, but has no converging subsequence. More critical loss of compactness in $BV$ occurs if the inequality \eqref{upperbound} fails to be strict.
%Indeed, consider the one dimensional case $\o=(0,1)$ and the boundary energy density $\tau(x,p)\coloneqq p$.
%Let  $f(s)\coloneqq\ln(s)$ and define the sequence $\{u_n\}_{n\in\N}\subset W^{1,1}(\o)$ by
%\[
%u_n(s) \coloneqq
%\left\{
%\begin{array}{ll}
%\sup_{i=1,\dots,n-1} \widetilde{u}_i(s) & \text{ for } s\in\left(1/n, 1\right),\\
%& \\
%n\left( \ln\left(1/(n-1)\right) - 1 \right) & \text{ for } s\in\left( 0, 1/n \right),
%\end{array}
%\right.
%\]
%where $\widetilde{u}_i$ is the tangent line to the graph of $f$ at the point $(1/i, f(1/i))$, namely $\widetilde{u}_i(s)\coloneqq is - \ln (i) - 1$.
% Then $u_n\in W^{1,1}(\o)$ and $u_n\to u$ in $L^1(\o)$, where $u(s)\coloneqq \inf_{i\in\N} \widetilde{u}_i(s)$. Moreover
%\[
%|D u_n|(\o) + \tau(u_n(0)) = 0,
%\]
%for all $n\in\N$, but $u$ does not belong to $BV(\o)$. This is the reason why, in the case of sets of class $C^2$, where we do not require inequality \eqref{upperbound} to be strict, we only provide the integral representation result in $BV(\o;\R^M)$, despite the fact that there could be functions $u\in L^1(\o;\R^M)\setminus BV(\o;\R^M)$ for which $\overline{\f}(u)\in \R$.}\\

Finally, it is natural to ask whether a similar analysis can be carried out for general Lipschitz domains. We plan to address this question in a forthcoming paper. A partial result in this direction, which requires a rather stringent (but classical) condition on the contact energy $\tau$, is presented in \Cref{Lip-dom}. We refer to \Cref{lip-rem} for more details.

%%%%%%%%%%%%%%%%%%%%%%%%%%%%%%%%%%%%%%%%%%%%%%%%
%%%%%%%%%%%%%%%%%%%%%%%%%%%%%%%%%%%%%%%%%%%%%%%%
%%%%%%%%%%%%%%%%%%%%%%%%%%%%%%%%%%%%%%%%%%%%%%%%
%%%%%%%%%%%%%%%%%%%%%%%%%%%%%%%%%%%%%%%%%%%%%%%%

\subsection{Plan of the paper}
The paper is organized as follows. In \Cref{Prelim-sec} we introduce the relevant notation and present a series of useful technical results that will be needed throughout the paper. 
%In particular, \Cref{sec-BV} revolves around functions of bounded variations and their traces. %After briefly recalling the main definitions, we present a version of a classical theorem due to Gagliardo (see \Cref{giusti-C1}) concerning the surjectivity of the trace operator, which is used later in the section to obtain a series of approximation results. 
Particularly important for our purposes are the sharp trace inequalities presented in \Cref{sect-sharp-ineq}. We mention here \Cref{thm:trace} and \Cref{local-AG}, which will be used in \Cref{sec-liminf} to prove the liminf inequality in the case of domains with boundary of class $C^2$ and almost of class $C^1$, respectively. \Cref{sec-limsup} is entirely devoted to the proof of the limsup inequality. Finally, in \Cref{sec-extensions} we prove two variants of \Cref{thm:main}. In \Cref{sec-normal} we show that, under very mild additional integrability assumptions, the techniques presented can be adapted to include surface densities $\tau$ which are not necessarily continuous in the second variable. In \Cref{sec-Lip} we discuss the possibility of extending our analysis to general Lipschitz domains. To be precise, we show that the relaxed energy $\overline{\f}$ is a lower semicontinuous extension of $\f$ in $BV(\o;\R^M)$ by assuming a rather stringent Lipschitz condition on the function $\tau$.

%%%%%%%%%%%%%%%%%%%%%%%%%%%%%%%%%%%%%%%%%%%%%%%%
%%%%%%%%%%%%%%%%%%%%%%%%%%%%%%%%%%%%%%%%%%%%%%%%
%%%%%%%%%%%%%%%%%%%%%%%%%%%%%%%%%%%%%%%%%%%%%%%%
%%%%%%%%%%%%%%%%%%%%%%%%%%%%%%%%%%%%%%%%%%%%%%%%

\section{Preliminary results}
\label{Prelim-sec}
The purpose of this section is to collect some of the definitions, tools, and technical results that will be used throughout the paper, as well as to introduce most of the relevant notation. 

\subsection{Functions of bounded variation}
\label{sec-BV}
We begin by recalling basic definitions and properties of functions of bounded variation. A detailed treatment of these topics can be found, for example, in the monographs \cite{AFP}, \cite{EG}, and \cite{G}. 

\begin{definition}
Let $u \in L^1(\o; \R^M)$. We say that $u$ is of bounded variation in $\o$ if its distributional derivative $Du$ is a finite matrix-valued Radon measure on $\Omega$, i.e.\@, if 
\[
|Du|(\o) = \sup \left\{\sum_{i = 1}^M \int_{\o}u^i(x)(\div \varphi)^i(x)\,dx : \varphi \in C_c^{\infty}(\o; \R^{M \times N}), |\varphi(x)| \le 1\right\} < \infty.
\] 
We write $BV(\o; \R^M)$ to denote the vector space of functions of bounded variation in $\o$.
\end{definition}

We recall below the definition of strict convergence in $BV(\o; \R^M)$. Used in conjunction with a smoothing argument, this notion of convergence will prove useful throughout the rest of the paper to infer the counterpart in $BV(\o; \R^M)$ to several identities for Sobolev functions. 

\begin{definition}
Let $u \in BV(\o; \R^M)$. We say that a sequence $\{u_n\}_{n \in \N} \subset BV(\o; \R^M)$ converges strictly in $BV(\o; \R^M)$ to $u$ if $u_n \to u$ in $L^1(\o; \R^M)$ and $|Du_n|(\o) \to |Du|(\o)$.
\end{definition}

\begin{theorem}
\label{GGT}
Let $\o$ be a bounded open subset of $\R^N$ with Lipschitz continuous boundary. Then there exists a continuous linear operator 
\[
\tr \colon BV(\o; \R^M) \to L^1(\partial \o; \R^M)
\]
with the following properties: 
\begin{itemize}
\item[$(i)$] $\tr u = u$ on $\partial \o$ for all $u \in BV(\o; \R^M) \cap C(\overline{\o}; \R^M)$;
\item[$(ii)$] for all $u \in BV(\o; \R^M)$ and $\varphi \in C^1(\R^N; \R^N)$
\[
\int_{\partial \o} (\varphi(x) \cdot \nu_{\partial \o}(x)) \tr u(x) \,d\hno = \int_{\o} \varphi(x) \cdot dDu(x) + \int_{\o} \div \varphi(x) u(x) \,dx,
\]
where $\nu_{\partial \o}$ denotes the outer unit normal vector to $\partial \o$.
\end{itemize}
\end{theorem}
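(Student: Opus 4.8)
The plan is to localise the problem with a partition of unity, flatten the boundary, and then build the trace directly from the one-dimensional sections of $u$, for which all the estimates are elementary. Since $\partial\o$ is compact and Lipschitz, I would first cover it with finitely many balls $B_k$ such that, after a rigid motion, $\o\cap B_k$ lies on one side of the graph of a Lipschitz function $f_k$; taking a smooth partition of unity $\{\zeta_k\}$ subordinate to this cover (plus one interior cutoff) and using linearity, it suffices to construct $\tr$ and verify $(i)$ and $(ii)$ for functions supported in a single chart and then sum, since $\zeta_k$ is smooth and $\tr(\zeta_k u)=\zeta_k\tr u$. Inside one chart I would flatten the boundary through the bi-Lipschitz change of variables $(x_i',x_i)\mapsto(x_i',x_i-f_k(x_i'))$, which preserves membership in $BV$ and sends $\o\cap B_k$ onto a piece of the half-space $\{x_i>0\}$; the only bookkeeping is to track how $Du$ and the surface measure $\hno$ transform, which is routine.

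In the flattened model, say $\o=Q'\times(0,1)$, I would define the trace by slicing. By the one-dimensional structure theory of $BV$, for $\hno$-a.e.\@ $x'\in Q'$ the section $t\mapsto u(x',t)$ lies in $BV(0,1;\R^M)$ and therefore admits a one-sided limit at the origin, so I set $\tr u(x'):=\lim_{t\to 0^+}u(x',t)$. The fundamental theorem of calculus for one-variable $BV$ functions gives, for a.e.\@ $x'$ and every $t\in(0,1)$, the pointwise bound $|\tr u(x')|\le|u(x',t)|+|D_tu(x',\cdot)|((0,t))$. Integrating in $x'$, averaging over $t\in(0,1)$, and using Fubini together with the slicing identity $\int_{Q'}|D_tu(x',\cdot)|(A)\,dx'=|D_Nu|(Q'\times A)\le|Du|(Q'\times A)$, I obtain $\tr u\in L^1(\partial\o;\R^M)$ and the continuity estimate $\|\tr u\|_{L^1(\partial\o)}\le C(\|u\|_{L^1(\o)}+|Du|(\o))$. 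This shows that $\tr$ is a bounded linear operator.

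For the Gauss--Green identity $(ii)$ I would argue componentwise and first treat $u\in W^{1,1}(\o;\R^M)$ (in particular $C^1(\overline\o;\R^M)$), where $\tr u$ is the usual Sobolev trace and the formula follows from the classical divergence theorem on Lipschitz domains applied to the vector field $u^j\varphi$. To reach a general $u\in BV(\o;\R^M)$ I would approximate by mollifications $u_\e:=u*\rho_\e\in C^\infty(\o;\R^M)\cap W^{1,1}(\o;\R^M)$ converging strictly, so that $\int_\o\div\varphi\,u_\e\,dx\to\int_\o\div\varphi\,u\,dx$ and $\int_\o\varphi\cdot dDu_\e\to\int_\o\varphi\cdot dDu$. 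The delicate point, and the main obstacle of the proof, is the convergence of the boundary term: smooth functions are \emph{not} dense in $BV(\o;\R^M)$ for the norm topology, so the a priori estimate by itself does not justify passing to the limit. I would overcome this by showing, once more through slicing, that the trace is continuous with respect to strict convergence; concretely, for a.e.\@ $x'$ the sections $u_\e(x',\cdot)$ converge strictly in $BV(0,1)$ to $u(x',\cdot)$, whence $\tr u_\e(x')\to\tr u(x')$, and the one-dimensional bound above upgrades this to convergence in $L^1(\partial\o;\R^M)$ by dominated convergence. With the boundary term controlled, the identity passes to the limit.

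Finally, property $(i)$ is immediate from the slicing definition: if $u\in BV(\o;\R^M)\cap C(\overline\o;\R^M)$, then $\lim_{t\to0^+}u(x',t)=u(x',0)$ by continuity, so $\tr u$ coincides $\hno$-a.e.\@ with the pointwise restriction of $u$ to $\partial\o$, and reassembling the charts via the partition of unity yields the global statement.
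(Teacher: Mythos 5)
The paper does not actually prove this statement: Theorem~\ref{GGT} is recalled as a classical result, with the reader referred to the monographs \cite{AFP}, \cite{EG}, \cite{G}. Your argument is the standard textbook proof from exactly those sources (localization, bi-Lipschitz flattening, definition of the trace by one-dimensional slicing, and the Gauss--Green identity by approximation), so in substance it is the ``right'' proof and is correct in outline. Two points, however, are stated more casually than they can be carried out. First, the plain mollification $u_\e := u*\rho_\e$ is not defined on all of $\o$ (the convolution needs room near $\partial\o$); to get smooth approximants converging strictly one must use the variable-scale (Anzellotti--Giaquinta/Meyers--Serrin type) smoothing -- this is precisely the content of \Cref{smoothapprox}, and without it the approximation step does not start. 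Second, the passage from global strict convergence $|Du_\e|(\o)\to|Du|(\o)$ to strict convergence of $\hno$-a.e.\@ one-dimensional section is not automatic: you first need $|D_Nu_\e|(\o)\to|D_Nu|(\o)$ (which follows from Reshetnyak's continuity theorem, not from lower semicontinuity of the components alone), then combine sectionwise lower semicontinuity with convergence of the integrated masses to get a.e.\@ sectional strict convergence along a subsequence, and finally upgrade $\tr u_\e\to\tr u$ pointwise to convergence in $L^1(\partial\o;\R^M)$ by Vitali's theorem (the natural majorant involves $|Du_\e|$, which varies with $\e$, so plain dominated convergence does not apply as written). Both gaps are repairable by standard arguments and do not affect the validity of the strategy; alternatively, the Gauss--Green formula can be verified directly for the sliced trace in the flattened model via Fubini and the one-dimensional fundamental theorem of calculus, bypassing the approximation step altogether.
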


The next result is a slight refinement of a well-known theorem by Gagliardo (see \cite{MR102739}) concerning the surjectivity of the trace operator. 

\begin{theorem}
\label{giusti-C1}
Let $\o$ be a bounded open subset of $\R^N$ with Lipschitz continuous boundary. Then, for every $\e > 0$ there exists a constant $C > 0$ such that for every $g \in L^1(\partial \o; \R^M)$ we can find a function $w \in W^{1,1}(\o;\R^M)$ having trace $g$ on $\partial \o$ and such that
\begin{align*}
\int_{\o} |w(x)|\,dx & \le \e \int_{\partial \o}|g(x)|\,d\hno, \\
\int_{\o} |\nabla w(x)|\,dx & \le C \int_{\partial \o}|g(x)|\,d\hno.
\end{align*}
Moreover, if in addition $\partial \o$ is either of class $C^1$, or almost of class $C^1$ (in the sense of \Cref{a-C1}), we can take $C = 1 + \e$. 
\end{theorem}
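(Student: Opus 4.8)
The plan is to separate the argument into a general construction valid for any Lipschitz domain, which already yields the small $L^1$ norm together with a non-sharp gradient constant $C$, and a second, sharper construction that exploits the $C^1$ structure to bring the gradient constant down to $1 + \e$. For the first part I would start from the classical Gagliardo extension \cite{MR102739}, which produces some $\hat w \in W^{1,1}(\o; \R^M)$ with $\tr \hat w = g$ and $\|\hat w\|_{W^{1,1}} \le C_0 \|g\|_{L^1(\partial\o)}$, and then concentrate it near the boundary by setting $w \coloneqq \hat w\, \beta(\dist(\cdot, \partial\o)/\delta)$, where $\beta$ decreases monotonically from $\beta(0) = 1$ to $0$ on $[0,1]$. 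Since the trace only sees $\beta(0) = 1$, one has $\tr w = g$; moreover $\int_\o |w| \le \int_{\{\dist < \delta\}}|\hat w| \to 0$ as $\delta \to 0^+$, which gives the bound $\e\|g\|_{L^1(\partial\o)}$ for $\delta$ small. The cutoff produces the additional gradient term $\hat w\, \beta'/\delta$, whose $L^1$ norm I would control by a uniform trace estimate on the inner parallel sets $\{\dist = t\}$, $t \in (0,\delta)$, so that the total gradient stays bounded by $C\|g\|_{L^1(\partial\o)}$. This establishes the statement with a (non-sharp) constant $C$.

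For the refined constant I would first treat a dense class of data, say $h$ Lipschitz on $\partial\o$, on a domain of class $C^1$. The \emph{key geometric observation} is that, because the local graphs $f$ are $C^1$, around each point of $\partial\o$ I can rotate coordinates to align with the tangent plane and shrink the neighborhood so that the graph $f_j$ representing $\partial\o$ in $U_j$ satisfies $\|\nabla f_j\|_\infty \le \e$. By compactness finitely many such patches cover $\partial\o$; I fix a subordinate partition of unity $\{\zeta_j\}$, write $h_j \coloneqq \zeta_j h$, and in each patch (say in the case $\o \cap U_j = \{x_N > f_j(x_i')\}$) define the \emph{vertical extension} $w_j(x', x_N) \coloneqq H_j(x')\, \phi\big((x_N - f_j(x'))/\delta\big)$, where $H_j(x') \coloneqq h_j(x', f_j(x'))$ and $\phi$ decreases monotonically from $\phi(0) = 1$ to $0$. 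A direct computation, after the change of variables $s = (x_N - f_j(x'))/\delta$, shows that $\tr w_j = h_j$, that the normal derivative contributes exactly $\int_{\R^{N-1}} |H_j|\,dx' \le \int_{\partial\o} |h_j|\,d\hno$ (using $d\hno = \sqrt{1 + |\nabla f_j|^2}\,dx' \ge dx'$), and that the tangential derivative splits into a term bounded by $\int_{\R^{N-1}} |H_j|\,|\nabla f_j|\,dx' \le \e \int_{\partial\o} |h_j|\,d\hno$ and a term proportional to $\delta \int_{\R^{N-1}}|\nabla_{x'} H_j|\,dx'$. Summing over $j$ and using $\sum_j \zeta_j = 1$ yields $\int_\o |\nabla w| \le (1 + \e)\int_{\partial\o}|h|\,d\hno + O(\delta)$ and $\int_\o |w| = O(\delta)$.

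The decisive point here, which I would emphasize, is that the derivatives of the partition of unity enter \emph{only} through $\nabla_{x'} H_j$, hence only in the $O(\delta)$ terms. Thus, after all other parameters have been fixed, choosing $\delta$ sufficiently small absorbs these contributions into $\e\|h\|_{L^1(\partial\o)}$, leaving the clean constant $1 + \e$. To pass from Lipschitz $h$ to a general $g \in L^1(\partial\o; \R^M)$ I would decompose $g = h + r$ with $h$ Lipschitz and $\|r\|_{L^1(\partial\o)} \le \eta$, extend $h$ by the sharp construction and $r$ by the general one of the first step, and add the two extensions: the trace is $g$, while $\int_\o|\nabla w| \le (1+\e)\|h\|_{L^1(\partial\o)} + C_0 \eta \le (1 + 2\e)\|g\|_{L^1(\partial\o)}$ once $\eta$ is taken small relative to $\|g\|_{L^1(\partial\o)}$, and the $L^1$ norm stays small. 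Relabeling $2\e$ as $\e$ gives the claim for $C^1$ domains.

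Finally, for domains almost of class $C^1$ (\Cref{a-C1}) I would additionally isolate the singular set $\mathcal S$. Since $\mathcal S$ is closed with $\hno(\mathcal S) = 0$, outer regularity of $\hno \restr \partial\o$ provides an open set $V \supset \mathcal S$ with $\hno(\partial\o \cap V)$, and therefore, by absolute continuity of the integral, $\int_{\partial\o \cap V}|g|\,d\hno$, as small as desired. I would then choose the partition of unity so that one function $\zeta_0$ is supported in $V$ and equals $1$ on a neighborhood of $\mathcal S$, which forces the remaining $\zeta_j$ to be supported in the $C^1$ portion $\partial\o \setminus \mathcal S$, where the sharp construction above applies verbatim; the piece $\zeta_0 g$ is handled by the general extension, contributing at most $C_0 \int_{\partial\o \cap V}|g|\,d\hno \le \e\|g\|_{L^1(\partial\o)}$ to the gradient. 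I expect the main obstacle to be precisely the sharp bookkeeping in the vertical construction together with this treatment of $\mathcal S$: one must verify that the patching errors and the partition-of-unity derivatives are all relegated to the negligible $O(\delta)$ and small-neighborhood terms, so that the factor $1+\e$ survives, and one must justify the uniform trace estimate on inner parallel sets used in the first step.
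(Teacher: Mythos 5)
Your proposal is correct, but it reaches the sharp constant by a genuinely different mechanism than the paper. The paper localizes, flattens each $C^1$ patch by the bi-Lipschitz map $(y',t)\mapsto(y',t-f(y'))$, invokes the flat-boundary extension of \Cref{giusti-2.15} (which already gives both the small $L^1$ norm and the $(1+\eta)$ gradient bound for \emph{arbitrary} $L^1$ data), and pulls back; the Jacobian of the flattening costs a factor $C_\rho=\sqrt{1+(N-1)\rho+(N-1)\rho^2}$ controlled by making $|\nabla f|\le\rho$ small, and the partition-of-unity derivatives are absorbed because the local extensions have $L^1$ norm at most $\eta\|g\|_{L^1(\partial\o)}$, so their contribution is $\|\nabla\psi_i\|_\infty\,\eta\le\eta^{1/2}$ after choosing $\eta$ small relative to the covering. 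You instead build an explicit thin-slab vertical extension $H_j(x')\phi((x_N-f_j(x'))/\delta)$, which forces you to restrict to Lipschitz data so that $\nabla_{x'}H_j\in L^1$ and the partition-of-unity/tangential contributions become $O(\delta)$, and then recover general $g\in L^1(\partial\o;\R^M)$ by an $L^1$ decomposition $g=h+r$ with the remainder handled by the crude extension. Both routes are valid: the paper's handles all $L^1$ data in one pass and needs no density step; yours is more self-contained at the sharp stage and makes the role of the slab thickness $\delta$ transparent. Your treatment of the singular set $\mathcal S$ via an open neighborhood of small $\hno$-measure matches the paper's Step 3 in substance. The one point that needs more care than you give it is the ``uniform trace estimate on the inner parallel sets $\{\dist=t\}$'' in your crude step: for a general Lipschitz domain these level sets need not themselves be Lipschitz graphs, so you should instead slice along the vertical direction in each local chart (the surfaces $\{x_N=f(x')+t\}$ are uniform Lipschitz translates of $\partial\o$), which yields the required bound $\int_{\{\dist<\delta\}}|\hat w|\le C\delta\|\hat w\|_{W^{1,1}(\o)}$ and keeps the final constant $C$ independent of $\delta$ and of $g$.
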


As also previously explained in \cite{G} (see Theorem 2.16 and Remark 2.17), \Cref{giusti-C1} can be obtained by first constructing an explicit extension of the boundary value $g$ in the special case where $\partial \o$ is a hyperplane, and conclude in the general case by standard localization and flattening arguments based on a suitably defined partition of unity. While the assertions of \Cref{giusti-C1} are well known to all specialists in the field, to the best of our knowledge a proof of this extension theorem in the present setting is not available in the literature. Since this result is pivotal to our construction of a recovery sequence, such proof is included here for completeness. Following the strategy outlined above, we will make use of the following result, for a proof of which we refer the reader to Proposition 2.15 in \cite{G}.
\begin{proposition}
\label{giusti-2.15}
Let $B_{N - 1}(0', R)$ denote the ball of radius $R$ centered at the origin of $\R^{N - 1}$ and let $g$ be a function in $L^1(B_{N - 1}(0',R); \R^M)$ with compact support. For every $\e > 0$ there exists a function $w \in W^{1,1}(B_{N - 1}(0',R) \times (0, R); \R^M)$ with trace $g$ on $B_{N - 1}(0',R)$ and such that
\begin{align*}
\int_{B_{N - 1}(0',R) \times (0, R)} |w(x)|\,dx & \le \e \int_{B_{N - 1}(0',R)}|g(x)|\,d\hno, \\
\int_{B_{N - 1}(0',R) \times (0, R)} |\nabla w(x)|\,dx & \le (1 + \e) \int_{B_{N - 1}(0',R)}|g(x)|\,d\hno.
\end{align*}
\end{proposition}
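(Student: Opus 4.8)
The plan is to arrange the extension so that its gradient is, up to an arbitrarily small error, purely vertical and of total variation exactly $\|g\|_{L^1}$. This is the natural target: if $w$ is supported in a slab $\{0<x_N<h\}$ and has trace $g$, then integrating $\partial_{x_N}w$ along vertical segments shows $\int|\partial_{x_N}w|\ge\|g\|_{L^1}$, so this vertical cost is irreducible, and I would like to realize it while paying only $\e\|g\|_{L^1}$ for the horizontal derivatives and for the $L^1$ norm. The naive choice $w(x',x_N)=g(x')\psi(x_N)$, with $\psi$ a monotone cutoff with $\psi(0)=1$, attains $\int|\partial_{x_N}w|=\|g\|_{L^1}$ exactly, but its horizontal gradient involves $\nabla_{x'}g$ and is infinite when $g\in L^1\setminus BV$. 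Horizontal regularization is therefore unavoidable; yet a single mollification spoils the exact trace, while letting the mollification scale $\sigma(x_N)$ vanish as $x_N\to0$ introduces a term $\int\psi\,|\sigma'|/\sigma$ that diverges. I would resolve this by regularizing at a discrete sequence of scales that accumulate at the boundary.

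Concretely, I would fix a standard mollifier $\phi$ on $\R^{N-1}$, write $\phi_\delta(y)\coloneqq\delta^{-(N-1)}\phi(y/\delta)$, set $r_0\coloneqq g$, and define recursively $r_{j+1}\coloneqq r_j-r_j*\phi_{\delta_j}$. Using $\|r_j*\phi_{\delta_j}-r_j\|_{L^1}\to0$ as $\delta_j\to0^+$, the scales $\delta_j$ can be chosen so small that
\[
\|r_{j+1}\|_{L^1(\R^{N-1})}\le 2^{-(j+1)}\e\,\|g\|_{L^1(\R^{N-1})},
\]
and, in addition, $\sum_j\delta_j$ is small enough that every $r_j*\phi_{\delta_j}$ stays supported in $B_{N-1}(0',R)$. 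I then introduce the smooth partial sums
\[
G_j\coloneqq g-r_j=\sum_{i=0}^{j-1}r_i*\phi_{\delta_i},
\]
so that $G_0\equiv0$, each $G_j$ is smooth with compact support in $B_{N-1}(0',R)$, and $G_j\to g$ in $L^1(\R^{N-1})$. Choosing heights $a_0>a_1>\cdots\to0$ with $a_0<R$, I would define $w$ on each layer $x_N\in(a_{j+1},a_j)$ by the monotone interpolation
\[
w(x',x_N)\coloneqq\lambda_j(x_N)\,G_j(x')+\bigl(1-\lambda_j(x_N)\bigr)\,G_{j+1}(x'),
\]
with $\lambda_j$ increasing from $\lambda_j(a_{j+1})=0$ to $\lambda_j(a_j)=1$, and set $w\equiv0$ for $x_N\ge a_0$. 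The two adjacent formulas agree on each interface $\{x_N=a_j\}$ (both equal $G_j$), so $w$ is continuous; and since $w(\cdot,x_N)$ lies between $G_j$ and $G_{j+1}$ on the $j$-th layer, one gets $w(\cdot,x_N)\to g$ in $L^1(\R^{N-1})$ as $x_N\to0^+$, whence $\tr w=g$.

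The estimates would then split cleanly. On the $j$-th layer $\partial_{x_N}w=\lambda_j'(x_N)(G_j-G_{j+1})=-\lambda_j'(x_N)\,(r_j*\phi_{\delta_j})$, so by monotonicity of $\lambda_j$
\[
\int_{B_{N-1}(0',R)\times(a_{j+1},a_j)}|\partial_{x_N}w|\,dx=\|r_j*\phi_{\delta_j}\|_{L^1}\le\|r_j\|_{L^1};
\]
summing and using $\|r_0\|_{L^1}=\|g\|_{L^1}$ with the choice of $\delta_j$ gives the sharp bound $\int|\partial_{x_N}w|\le\|g\|_{L^1}+\sum_{j\ge1}2^{-j}\e\|g\|_{L^1}=(1+\e)\|g\|_{L^1}$. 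For the horizontal derivative and the $L^1$ norm I would exploit that, once the scales $\delta_j$ are frozen, each $G_j$ is a fixed smooth function with finite $\|\nabla G_j\|_{L^1}$ and $\|G_j\|_{L^1}\le2\|g\|_{L^1}$; choosing the thicknesses $a_j-a_{j+1}$ small enough a posteriori then forces $\int|\nabla_{x'}w|\le\e\|g\|_{L^1}$ and $\int|w|\le\e\|g\|_{L^1}$, while keeping $\sum_j(a_j-a_{j+1})=a_0<R$. Combining the bounds via $|\nabla w|\le|\partial_{x_N}w|+|\nabla_{x'}w|$ and relabeling $\e$ gives the claim.

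The main obstacle is securing the sharp constant on the vertical part. This is exactly what forces the remainders $\|r_j\|_{L^1}$ to be summably small, so that the layerwise contributions $\|r_j*\phi_{\delta_j}\|_{L^1}$ add up to no more than $(1+\e)\|g\|_{L^1}$; and it is the monotonicity of each profile $\lambda_j$ that makes the $j$-th contribution \emph{equal} to $\|r_j*\phi_{\delta_j}\|_{L^1}$ rather than merely comparable. By contrast, the horizontal and $L^1$ contributions are soft and are rendered negligible only after the scales---and hence the smooth partial sums $G_j$ together with their finite horizontal gradients---have been fixed, by shrinking the layer thicknesses. The delicacy lies entirely in the order of these choices and in verifying that the successive mollifications never enlarge supports beyond $B_{N-1}(0',R)$.
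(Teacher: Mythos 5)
Your construction is correct and is essentially the classical layered-interpolation argument behind this result, which the paper does not prove itself but cites from Giusti's monograph (Proposition 2.15 in \cite{G}): smooth approximations $G_j \to g$ in $L^1$ with geometrically small errors, monotone vertical interpolation across thin layers so the vertical derivative contributes exactly $\sum_j \|G_{j+1}-G_j\|_{L^1} \le (1+\e)\|g\|_{L^1}$, and horizontal and $L^1$ costs made negligible by shrinking the layer thicknesses after the mollification scales are frozen. Your iterated-remainder scheme $r_{j+1} = r_j - r_j \ast \phi_{\delta_j}$ is an inessential (if slightly cleaner) variant of mollifying $g$ directly; the only detail left implicit is that the heights must be arranged to satisfy $a_j \to 0$ while respecting the layerwise thickness bounds, which is achieved for instance by setting $a_j \coloneqq \sum_{i \ge j} \min\{b_i, 2^{-i}\e\}$ with $b_i$ the admissible thickness of the $i$-th layer.
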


Before we proceed with the proof of \Cref{giusti-C1}, let us mention that here and in the following, given a set $E \subset \R^N$ and a finite collection of open sets $\{U_i\}_{i \in I}$ such that $E \subset \bigcup_{i \in I} U_i$, we say that the family of functions $\{\psi_i\}_{i \in I}$ is a partition of unity on $E$ subordinated to the covering $\{U_i\}_{i \in I}$ if $\psi_i \in C^1_c(U_i; [0, 1])$ for each $i \in I$ and furthermore
\[
\sum_{i \in I}\psi_i(x) = 1
\]
for each $x \in E$.  

\begin{proof}[Proof of \Cref{giusti-C1}]
We divide the proof into several steps.
\newline
\textbf{Step 1:} Assume first that $\partial \o$ is of class $C^1$. Then, for every $z \in \partial \o$ we can find an open set $U_z$, $z \in U_z$, a rigid motion $\varphi_z: \R^N\to\R^N$, and a function $f_z \colon \R^{N - 1} \to \R$ of class $C^1$ with 
\begin{equation}
\label{grad=0}
f_z(0') = 0, \qquad \nabla f_z(0') = 0'
\end{equation}
such that
\[
\varphi_z(\o \cap U_z) = \{y = (y', t) \in \varphi_z(U_z) : y' \in U'_{z} \text{ and } t > f_z(y')\},
\]
where $U'_{z} \subset \R^{N - 1}$ is an open neighborhood of $0'$. Moreover, let $\Psi_z \colon \R^N \to \R^N$ be defined via
\[
\Psi_z(y', t) \coloneqq (y', t - f_z(y')),
\]
and set
\[
\Phi_z(x) \coloneqq \Psi_z(\varphi_z(x)).
\]
Notice that $\Phi_z \colon \R^N \to \R^N$ is Lipschitz continuous, one-to-one, and that $|\det \nabla \Phi_z(x)| = 1$ for every $x \in \R^N$. Let $\rho > 0$ be given and let $R_z > 0$ be such that
\begin{equation}
\label{rho-A}
\max \left\{|\nabla f_{z}(y')| : y' \in \overline{B_{N - 1}(0', R_z)}\right\} \le \rho, 
\end{equation}
and 
\[
\Phi_z^{-1}\left(B_{N - 1}(0', R_z) \times (-R_z, R_z)\right) \subset U_z.
\]
Since $\partial \o$ is compact, we can find $z^1, \dots, z^k \in \partial \o$ with the property that 
\[
\partial \o \subset \bigcup_{i = 1}^k V_i, 
\]
where
\[
V_i \coloneqq \Phi_{z^i}^{-1}\left(B_{N - 1}(0', R_{z^i}) \times (-R_{z^i}, R_{z^i})\right).
\]
To keep the notation as simple as possible, throughout the rest of the proof we set $\Phi_i \coloneqq \Phi_{z^i}$, $f_i \coloneqq f_{z^i}$, and use a similar convention whenever applicable. Let $\{\psi_i\}_{i \le k}$ be a partition of unity on $\partial \o$ subordinated to the covering $\{V_i\}_{i \le k}$. Furthermore, let $g_i \coloneqq \psi_i g$ and set
\begin{equation}
\label{vi-def}
v_i(y') \coloneqq g_i(\Phi_i^{-1}(y', 0)).
\end{equation}
Then $v_i \in L^1(B_{N - 1}(0', R_i); \R^M)$ and has compact support. Thus, we are in a position to apply \Cref{giusti-2.15} for $\eta > 0$, chosen in such a way that
\begin{equation}
\label{eta-small}
\max \left\{ \|\nabla \psi_i\|_{L^{\infty}(V_i; \R^N)} : i \le k \right\} \eta^{1/2} \le 1.
\end{equation}
To be precise, for every $i \in \{1, \dots, k\}$, we can find $w_i \in W^{1,1}(B_{N - 1}(0', R_i) \times (0, R_i); \R^M)$ such that
\begin{align}
\int_{B_{N - 1}(0',R_i) \times (0, R_i)} |w_i(q)|\,dq & \le \eta \int_{B_{N - 1}(0',R_i)}|v_i(y')|\,d\hno, \label{poU-eta}\\
\int_{B_{N - 1}(0',R_i) \times (0, R_i)} |\nabla w_i(q)|\,dq & \le (1 + \eta) \int_{B_{N - 1}(0',R_i)}|v_i(y')|\,d\hno. \label{poU-1+eta}
\end{align}
Finally, set
\[
w(x) \coloneqq \sum_{i = 1}^k \psi_i(x)w_i(\Phi_i(x)).
\]
The remainder of this step is dedicated to proving that $w$ has all the desired properties. Indeed, as one can readily check, $\tr w = g$ on $\partial \o$. Moreover, the change of variables $\Phi_i(x) = q$, together with \eqref{vi-def} and \eqref{poU-eta}, yields
\begin{align}
\int_{\o} |w(x)| \,dx  % & \le \sum_{i = 1}^k \int_{\o \cap V_i} |w_i(\Phi_i(x))|\,dx \notag \\
& \le \sum_{i = 1}^k \int_{B_{N - 1}(0', R_i) \times (0, R_i)} |w_i(q)|\,dq \notag \\
& \le \sum_{i = 1}^k \eta \int_{B_{N - 1}(0', R_i)}|v_i(y')|\,d\hno \notag \\
& \le \sum_{i = 1}^k \eta \int_{B_{N - 1}(0', R_i)}|v_i(y')|\sqrt{1 + |\nabla f_i(y')|^2}\,d\hno \notag \\
% & =  \sum_{i = 1}^k \eta \int_{\partial \o \cap V_i}|g_i(x)|\,d\hno 
% & = \eta \sum_{i = 1}^k \int_{\partial \o \cap V_i}\psi_i(x)|g(x)|\,d\hno \notag \\
& = \eta  \int_{\partial \o}|g(x)|\,d\hno. \label{L1bound-e}
\end{align}
For $x \in \o \cap V_i$, let $W_i(x) \coloneqq w_i(\Phi_i(x))$. We claim that for $\mathcal{L}^N$-a.e.\@ $x \in \o \cap V_i$ we have
\begin{equation}
\label{chain-grad}
|\nabla W_i(x)| \le C_{\rho} |\nabla w_i(\Phi_i(x))|,
\end{equation}
where 
\begin{equation}
\label{C-rho}
C_{\rho} \coloneqq \sqrt{1 + (N - 1)\rho + (N - 1)\rho^2}.
\end{equation}
Since $\Phi_i \coloneqq \Psi_i \circ \varphi_i$, where $\varphi_i$ is a fixed rigid motion, it is enough to show that 
\[
|\nabla \tilde{w}_i(y)| \le C_{\rho} |\nabla w_i(\Psi_i(y))|
\]
for $\mathcal{L}^N$-a.e.\@ $y \in \varphi_i(\o \cap V_i)$, where $\tilde{w}_i(y) \coloneqq w_i(\Psi_i(y))$. To see this, we begin by observing that for all $j \in \{1, \dots, N\}$ and for $\mathcal{L}^N$-a.e.\@ $y \in \varphi_i(\o \cap V_i)$
\[
%\label{chain}
\frac{\partial \tilde{w}_i}{\partial y_j}(y) = \sum_{n = 1}^N \frac{\partial w_i}{\partial q_n}(\Psi(y))\frac{\partial \Psi^n}{\partial y_j}(y).
\]
Moreover, as one can readily check, the expression above can be rewritten as 
\[
\frac{\partial \tilde{w}_i}{\partial y_j}(y) = 
\left\{
\arraycolsep=1.4pt\def\arraystretch{2}
\begin{array}{ll}
\displaystyle \frac{\partial w_i}{\partial q_j}(\Psi(y)) - \frac{\partial w_i}{\partial q_N}(\Psi(x))\frac{\partial f_i}{\partial y_j}(y') & \text{ if } j \le N - 1, \\
\displaystyle \frac{\partial w_i}{\partial q_N}(\Psi(y)) & \text{ if } j = N.
\end{array}
\right.
\]
Thus, by Young's inequality and \eqref{rho-A}, for all $j \in \{1, \dots, N - 1\}$ and $\mathcal{L}^N$-a.e.\@ $y \in \varphi_i(\o \cap V_i)$ we have that 
\begin{align*}
\left|\frac{\partial \tilde{w}_i}{\partial y_j}(y)\right|^2 & \le \left|\frac{\partial w_i}{\partial q_j}(\Psi(y))\right|^2 + \left|\frac{\partial f_i}{\partial y_j}(y')\right|^2 \left|\frac{\partial w_i}{\partial q_N}(\Psi(y))\right|^2 + 2 \left|\frac{\partial w_i}{\partial q_j}(\Psi(y)) \frac{\partial f_i}{\partial y_j}(y') \frac{\partial w_i}{\partial q_N}(\Psi(y))\right| \\
% & \le \left|\frac{\partial w_i}{\partial q_j}(\Psi(y))\right|^2 + \left|\frac{\partial f_i}{\partial y_j}(y')\right|^2 \left|\frac{\partial w_i}{\partial q_N}(\Psi(y))\right|^2 + \rho \left|\frac{\partial w_i}{\partial q_j}(\Psi(y))\right|^2 + \frac{1}{\rho} \left|\frac{\partial f_i}{\partial y_j}(y')\right|^2 \left|\frac{\partial w_i}{\partial q_N}(\Psi(y))\right|^2 \\
& \le (1 + \rho)\left|\frac{\partial w_i}{\partial q_j}(\Psi(y))\right|^2 + (\rho + \rho^2)\left|\frac{\partial w_i}{\partial q_N}(\Psi(y))\right|^2,
\end{align*}
and therefore, we obtain 
\begin{align*}
|\nabla \tilde{w}_i(y)|^2 % & = \sum_{i,j = 1}^N \left|\frac{\partial \tilde{w}_i}{\partial y_j}(y)\right|^2 \\
& = \sum_{i = 1}^N \sum_{j = 1}^{N - 1}\left|\frac{\partial \tilde{w}_i}{\partial y_j}(y)\right|^2 + \sum_{i = 1}^N\left|\frac{\partial \tilde{w}_i}{\partial y_N}(y)\right|^2 \\
& \le \sum_{i = 1}^N \sum_{j = 1}^{N - 1} \left[ (1 + \rho)\left|\frac{\partial w_i}{\partial q_j}(\Psi(y))\right|^2 + (\rho + \rho^2) \left|\frac{\partial w_i}{\partial q_N}(\Psi(y))\right|^2 \right] + \sum_{i = 1}^N \left|\frac{\partial w_i}{\partial q_N}(\Psi(y))\right|^2 \\
& = \sum_{i = 1}^N \sum_{j = 1}^{N - 1}(1 + \rho)\left|\frac{\partial w_i}{\partial q_j}(\Psi(y))\right|^2  + \sum_{i = 1}^N (1 + (N - 1)\rho + (N - 1)\rho^2) \left|\frac{\partial w_i}{\partial q_N}(\Psi(y))\right|^2 \\
& \le (1 + (N - 1)\rho + (N - 1)\rho^2)|\nabla w_i(\Psi(y))|^2.
\end{align*}
This concludes the proof of \eqref{chain-grad}. Since
\begin{equation}
\label{grad-PoU}
\int_{\o} |\nabla w(x)|\,dx \le \sum_{i = 1}^k \int_{\o \cap V_i}|\nabla \psi_i(x)||w_i(\Phi(x))|\,dx + \sum_{i = 1}^k \int_{\o \cap V_i} \psi_i(x)|\nabla W_i(x)|\,dx,
\end{equation}
reasoning as in \eqref{L1bound-e} and thanks to our choice of $\eta$ (see \eqref{eta-small}), we deduce that  
\begin{align}
\label{grad-PoU-1}
\sum_{i = 1}^k \int_{\o \cap V_i}|\nabla \psi_i(x)||w_i(\Phi(x))|\,dx & \le \max_i\|\nabla \psi_i(x) \|_{L^{\infty}(V_i; \R^N)}\sum_{i = 1}^k \int_{\o \cap V_i}|w_i(\Phi(x))|\,dx \notag \\
& \le \eta \max_i\|\nabla \psi_i(x) \|_{L^{\infty}(V_i; \R^N)}  \int_{\partial \o}|g(x)|\,d\hno \notag \\
& \le \eta^{1/2} \int_{\partial \o}|g(x)|\,d\hno.
\end{align}
Moreover, it follows from \eqref{chain-grad} that
\begin{align}
\label{grad-PoU-2}
\sum_{i = 1}^k \int_{\o \cap V_i} \psi_i(x)|\nabla W_i(x)|\,dx & \le C_{\rho} \sum_{i = 1}^k \int_{\o \cap V_i}|\nabla w_i(\Phi(x))|\,dx \notag \\
& \le C_{\rho} \sum_{i = 1}^k \int_{B_{N - 1}(0', R_i) \times (0, R_i)}|\nabla w_i(q)|\,dq \notag \\
& \le C_{\rho} (1 + \eta) \sum_{i = 1}^k \int_{B_{N - 1}(0', R_i)}|v_i(y')|\,d\hno \notag \\
& \le C_{\rho} (1 + \eta) \int_{\partial \o}|g(x)|\,d\hno, 
\end{align}
where in the second to last inequality we have used \eqref{poU-1+eta}. Combining \eqref{grad-PoU}, \eqref{grad-PoU-1}, and \eqref{grad-PoU-2} yields 
\begin{equation}
\label{gradbound-e}
\int_{\o} |\nabla w(x)|\,dx \le (\eta^{1/2} + C_{\rho}(1 + \eta)) \int_{\partial \o}|g(x)|\,d\hno.
\end{equation}
Finally, given $\e > 0$, from \eqref{L1bound-e} and \eqref{gradbound-e} we deduce that to conclude it is enough to choose $\rho$ and $\eta$ in such a way that $\eta \le \e$ and $\eta^{1/2} + C_{\rho}(1 + \eta) \le 1 + \e$ (see \eqref{C-rho} for the definition of $C_{\rho}$).
\newline
\textbf{Step 2:} If $\partial \o$ is Lipschitz continuous, the proof requires only minimal changes. Indeed, observe that since $\o$ is bounded, there exists a constant $L_{\partial \o} > 0$ such that for every $z \in \partial \o$ we can find a set of local coordinates and a Lipschitz continuous function $f_z$ with $\Lip f_z \le L_{\partial \o}$ such that $\partial \o$ coincides with the graph of $f_z$ in the new coordinate system. To be precise, it suffices to proceed as in the previous step, with the exception that we do not require that $\nabla f_z(0') = 0'$ in \eqref{grad=0}, and by selecting a rigid motion $\varphi_z$ in such a way that 
\[
\|\nabla f_z\|_{L^{\infty}(B_{N - 1}(0', R_z); \R^{N - 1})} \le L_{\partial \o}.
\]
This estimate is then used in place of \eqref{rho-A}. 
\newline
\textbf{Step 3:} Suppose now that $\partial \o$ is almost of class $C^1$ and let $\mathcal{S}$ be as in \Cref{a-C1}. Given $\e > 0$, let $\gamma$ be a positive constant, which we choose later. Reasoning as in the proof of Theorem 9.6 in \cite{MR2976521}, we can find a countable subset of $\mathcal{S}$, namely $\{z_n\}_{n \in \N} \subset \mathcal{S}$, such that 
\[
\mathcal{S} \subset \mathcal{B}_{\gamma} \coloneqq \bigcup_{n \in \N} B(z_n, \gamma_n),
\]
where each of the $\gamma_n$ is chosen in such a way that (up to a rotation) $\partial \o$ coincides with the graph of a Lipschitz function $f_n$ in $B(z_n, \gamma_n)$ with $\Lip f_n \le L_{\partial \o}$, and furthermore 
\begin{equation}
\label{gamma-n}
\sum_{n \in \N} \gamma_n^{N - 1} < \gamma.
\end{equation}
On the other hand, for every $z \in \partial \o \setminus \mathcal{S}$ we can find $\Phi_z$, $f_z$, and $R_z$ as in Step 1, in such a way that \eqref{rho-A} holds with $\rho = \e/2$. Notice that 
\[
\partial \o \subset \mathcal{B}_{\gamma} \cup \bigcup_{z \in \partial \o \setminus \mathcal{S}} \Phi_z^{-1}(B_{N - 1}(0', R_z) \times (- R_z, R_z)).
\]
Thus, extracting a finite subcover of $\partial \o$ and arguing as in the previous steps, we can find a function $w_{\gamma}$ with trace $g$ on $\partial \o$ such that 
\[
\int_{\o}|w_{\gamma}(x)|\,dx \le \e \int_{\partial \o}|g(x)|\,d\hno
\]
and
\[
\int_{\o}|\nabla w_{\gamma}(x)|\,dx \le \left(1 + \frac{\e}{2}\right)\int_{\partial \o}|g(x)|\,d\hno + C \int_{\partial \o \cap \mathcal{B}_{\gamma}}|g(x)|\,d\hno.
\]
Notice that the constant $C$, given as in the previous step, depends only on $\partial \o$ through $L_{\partial \o}$, and in particular is independent of $\gamma$. To conclude, it is enough to notice that since $\hno(\partial \o \cap \mathcal{B}_{\gamma}) \to 0$ as $\gamma \to 0$ (see \eqref{gamma-n}), 
\[
C \int_{\partial \o \cap \mathcal{B}_{\gamma}}|g(x)|\,d\hno \le \frac{\e}{2} \int_{\partial \o}|g(x)|\,d\hno
\]
for all $\gamma$ sufficiently small. This concludes the proof.
\end{proof}

Next, we recall two approximation results.

\begin{lemma}
\label{smoothapprox}
Let $\o$ be a bounded open subset of $\R^N$ with Lipschitz continuous boundary. Then, for every $u \in BV(\o; \R^M)$ there exists a sequence $\{u_n\}_{n \in \N}$ of functions in $W^{1,1}(\o; \R^M) \cap C(\overline{\o}; \R^M)$ such that $\tr u_n \to \tr u$ in $L^1(\partial \o; \R^M)$ and $u_n \to u$ strictly in $BV(\o;\R^M)$.
\end{lemma}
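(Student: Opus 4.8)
The plan is to prove the lemma by a trace-preserving mollification of Anzellotti--Giaquinta type, localized through a partition of unity adapted to the Lipschitz boundary. The guiding idea is that ordinary mollification destroys the boundary trace, so near $\partial\o$ one first translates $u$ slightly inward and only then mollifies, choosing the mollification parameter much smaller than the translation length. Concretely, since $\partial\o$ is compact and Lipschitz I would first cover it by finitely many balls $B_1,\dots,B_m$ such that, after a rigid motion, $\o\cap B_j=\{(y',y_N):y_N>f_j(y')\}$ for a Lipschitz function $f_j$, and denote by $e_j$ the corresponding inward coordinate direction. Adding a ball $B_0\subset\subset\o$ with $\overline\o\subset\bigcup_{j=0}^m B_j$ and a partition of unity $\{\zeta_j\}_{j=0}^m$ subordinate to $\{B_j\}_{j=0}^m$ reduces the problem to treating each chart separately.

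For the construction, on $B_0$ I would use ordinary mollification $\rho_\e * u$. For $j\ge 1$ and a small parameter $\sigma_j>0$, I set $u^{\sigma_j}(x):=u(x+\sigma_j e_j)$, which is a well-defined $BV$ function on a neighborhood of $\overline{\o\cap\supp\zeta_j}$, and then put $u_\e^{(j)}:=\rho_\e * u^{\sigma_j}$. Taking $\e$ small relative to $\sigma_j$ (the admissible ratio depending on $\Lip f_j$) guarantees that the convolution stencil remains inside $\o$, so that $u_\e^{(j)}$ is smooth on a neighborhood of $\overline{\o\cap\supp\zeta_j}$. The candidate approximants are then $u_\e:=\sum_{j=0}^m \zeta_j\, u_\e^{(j)}$. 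By construction $u_\e\in C^\infty(\o)\cap C(\overline\o;\R^M)$, and once the total variation is shown to be finite one concludes that $u_\e\in W^{1,1}(\o;\R^M)$.

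It remains to verify the three convergences, which I would obtain by letting $\e\to 0$ first and then $\sigma_j\to 0$, and diagonalizing. The convergence $u_\e\to u$ in $L^1(\o;\R^M)$ is immediate from the continuity of translations in $L^1$ together with the standard properties of mollifiers. For strict convergence, lower semicontinuity of the total variation along the $L^1$-convergent sequence gives $\liminf_\e|Du_\e|(\o)\ge|Du|(\o)$; for the reverse inequality I would bound $|Du_\e^{(j)}|$ on $\o\cap B_j$ by the total variation of $u$ on the inward-translated, $\e$-enlarged set, which decreases to $\o\cap B_j$ as $\e,\sigma_j\to 0$, and then absorb the partition-of-unity cross terms $\sum_j \nabla\zeta_j\,(u_\e^{(j)}-u)$, which tend to $0$ in $L^1$ because $\sum_j\nabla\zeta_j\equiv 0$ and each $u_\e^{(j)}\to u$. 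Finally, for the trace I would use that $\tr u_\e^{(j)}\to\tr u^{\sigma_j}$ in $L^1(\partial\o;\R^M)$ as $\e\to 0$, together with the characterization of the $BV$ trace as the $L^1$-limit of inward slices, which yields $\tr u^{\sigma_j}\to\tr u$ as $\sigma_j\to 0$; since $\sum_j\zeta_j\equiv 1$ on $\partial\o$, this gives $\tr u_\e\to\tr u$.

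The hard part is the simultaneous control of the strict and the trace convergence: both rely on coupling the translation lengths $\sigma_j$ to the mollification parameter $\e$ and on the slicing representation of the trace, while the total-variation upper bound further requires a careful treatment of the overlaps of the charts $B_j$ and of the cross terms introduced by the partition of unity. As an alternative route for the $L^1$ and strict estimates away from the trace, one could instead split $u=v+w$, with $w$ an extension of $\tr u$ provided by \Cref{giusti-C1} and $v$ of zero trace, mollify the zero extension of $v$, and recombine; this approach, however, does not directly produce approximants continuous up to $\partial\o$, which is why I favor the inward-translation construction above.
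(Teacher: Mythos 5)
The paper does not actually prove \Cref{smoothapprox}: it refers to Remark 3.22 and Theorem 3.88 in \cite{AFP}. Your proposal reconstructs the classical Anzellotti--Giaquinta--type argument that underlies those references: localize with a partition of unity over Lipschitz charts, translate inward by $\sigma_j$ along the chart direction, mollify at scale $\e\ll\sigma_j$ so the stencil stays in $\o$, and recombine. The skeleton is correct, and your handling of the cross terms $\sum_j\nabla\zeta_j\,(u^{(j)}_\e-u)$ is the standard one; for the total-variation upper bound the clean way to avoid double counting on chart overlaps is the estimate $\int_\o\zeta_j\,d|Du^{(j)}_\e|\le\int(\rho_\e*\zeta_j)\,d|Du^{\sigma_j}|$ together with $\sum_j\zeta_j\equiv 1$, rather than a crude bound by $|Du|$ of enlarged sets, but you flag this point yourself. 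The one step I would genuinely question is the trace convergence: you assert $\tr u^{(j)}_\e\to\tr u^{\sigma_j}$ on $\partial\o$ as $\e\to 0$, but $\rho_\e*u^{\sigma_j}$ restricted to $\partial\o$ converges $\hno$-a.e.\@ to the \emph{precise representative} of $u^{\sigma_j}$ (which is two-sidedly $BV$ near $\partial\o$ after the inward translation), and this differs from the one-sided interior trace on $J_{u^{\sigma_j}}\cap\partial\o$. You must either choose the $\sigma_j$ generically so that this intersection is $\hno$-null (a Fubini argument in the translation parameter), or, more simply, drop the direct trace computation and deduce $\tr u_\e\to\tr u$ in $L^1(\partial\o;\R^M)$ from the strict convergence already established, via the continuity of the trace operator with respect to strict convergence (Theorem 3.88 in \cite{AFP}) --- which is precisely how the paper's two citations are meant to be combined.
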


\begin{lemma}
\label{11approx}
Let $\o$ be a bounded open subset of $\R^N$ with Lipschitz continuous boundary. Then, for every $u \in BV(\o; \R^M)$ there exists a sequence $\{u_n\}_{n \in \N}$ of functions in $W^{1,1}(\o; \R^M)$ such that $\tr u_n = \tr u$ and $u_n \to u$ strictly in $BV(\o;\R^M)$.
\end{lemma}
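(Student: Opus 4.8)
The plan is to combine the two approximation tools already at our disposal. First I would invoke \Cref{smoothapprox} to obtain a sequence $\{v_n\}_{n \in \N}$ of functions in $W^{1,1}(\o; \R^M) \cap C(\overline{\o}; \R^M)$ such that $\tr v_n \to \tr u$ in $L^1(\partial \o; \R^M)$ and $v_n \to u$ strictly in $BV(\o; \R^M)$. These functions converge strictly to $u$ but in general fail to have the prescribed trace; the idea is to correct the trace defect by adding a small perturbation with the right boundary value and negligible $BV$ norm.

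To carry this out, set $g_n \coloneqq \tr u - \tr v_n \in L^1(\partial \o; \R^M)$ and note that $\|g_n\|_{L^1(\partial \o; \R^M)} \to 0$ by the first conclusion of \Cref{smoothapprox}. Fixing any $\e > 0$, I would apply \Cref{giusti-C1} to $g_n$ to obtain $w_n \in W^{1,1}(\o; \R^M)$ with $\tr w_n = g_n$ and
\[
\int_{\o} |w_n(x)|\,dx \le \e \|g_n\|_{L^1(\partial \o; \R^M)}, \qquad \int_{\o} |\nabla w_n(x)|\,dx \le C \|g_n\|_{L^1(\partial \o; \R^M)},
\]
where $C$ depends only on $\o$ and $\e$. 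Since $\|g_n\|_{L^1(\partial \o; \R^M)} \to 0$, both right-hand sides tend to zero, so $w_n \to 0$ in $L^1(\o; \R^M)$ and $\int_{\o} |\nabla w_n| \to 0$. I would then define $u_n \coloneqq v_n + w_n$. By linearity of the trace operator, $\tr u_n = \tr v_n + g_n = \tr u$, which is exactly the required trace condition.

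It remains to verify that $u_n \to u$ strictly in $BV(\o; \R^M)$. Strong $L^1$ convergence is immediate, since $v_n \to u$ in $L^1(\o; \R^M)$ and $w_n \to 0$ in $L^1(\o; \R^M)$. For the convergence of the total variations, I would combine the lower semicontinuity of the total variation with respect to $L^1$ convergence, which gives $|Du|(\o) \le \liminf_{n \to \infty} |Du_n|(\o)$, with the matching upper bound obtained from the construction: since $u_n \in W^{1,1}(\o; \R^M)$,
\[
|Du_n|(\o) = \int_{\o}|\nabla u_n(x)|\,dx \le \int_{\o}|\nabla v_n(x)|\,dx + \int_{\o}|\nabla w_n(x)|\,dx = |Dv_n|(\o) + \int_{\o}|\nabla w_n(x)|\,dx.
\]
Passing to the limit and using $|Dv_n|(\o) \to |Du|(\o)$ together with $\int_{\o}|\nabla w_n| \to 0$ yields $\limsup_{n \to \infty} |Du_n|(\o) \le |Du|(\o)$. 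The two inequalities give $|Du_n|(\o) \to |Du|(\o)$, completing the proof of strict convergence.

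Since every ingredient is supplied by \Cref{smoothapprox} and \Cref{giusti-C1}, there is no genuine obstacle; the only point requiring care is that the trace-correction term $w_n$ must be controlled \emph{simultaneously} in $L^1$ and in the gradient norm, so as not to destroy the strict convergence already achieved by $\{v_n\}$. This is precisely what the refined extension estimate in \Cref{giusti-C1} provides.
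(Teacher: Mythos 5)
Your argument is correct and is essentially the standard proof of this lemma: the paper itself does not prove it but refers to Lemma 2.5 of \cite{BFMglobal}, whose proof proceeds exactly as you do, namely by taking the strictly convergent smooth approximants of \Cref{smoothapprox} and repairing the trace defect with the Gagliardo-type extension of \Cref{giusti-C1}, whose gradient bound is proportional to the vanishing quantity $\|\tr u - \tr v_n\|_{L^1(\partial\o;\R^M)}$. The two-sided argument for $|Du_n|(\o) \to |Du|(\o)$ (lower semicontinuity from below, triangle inequality plus the vanishing correction from above) is exactly the right way to preserve strict convergence, so there is nothing to add.
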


For a proof of \Cref{smoothapprox} we refer to Remark 3.22 and Theorem 3.88 in \cite{AFP}; \Cref{11approx} corresponds to Lemma 2.5 in \cite{BFMglobal}.

The following result is due to Bouchitt\'e, Fonseca, and Mascarenhas (see Lemma 2.1 in \cite{MR1885119}), and is of key importance in our construction of a recovery sequence. We report here the proof for the reader's convenience. 

\begin{lemma}
\label{IF-lem}
Let $\o$ be a bounded open subset of $\R^N$ and assume that $\partial \o$ is almost of class $C^1$. Then, for every $u \in BV(\o; \R^M)$ and every $p \in L^1(\partial \o; \R^M)$, there exists a sequence $\{u_n\}_{n \in \N}$ of functions in $W^{1,1}(\o; \R^M)$ such that $\tr u_n = p$, $u_n \to w$ in $L^1(\o; \R^M)$, and 
\[
\limsup_{n \to \infty} \int_{\o}|\nabla u_n(x)|\,dx \le |Du|(\o) + \int_{\partial \o}|p(x) - \tr u(x)|\,d\hno.
\] 
\end{lemma}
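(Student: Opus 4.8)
The plan is to build the sequence by splitting the construction into two independent pieces: an interior approximation of $u$ that preserves its trace, and a boundary correction that installs the prescribed trace $p$ at a controlled cost. Concretely, I would seek $u_n$ in the form $u_n = v_n + w_n$, where $v_n \in W^{1,1}(\o;\R^M)$ is a strict approximation of $u$ with $\tr v_n = \tr u$, and $w_n \in W^{1,1}(\o;\R^M)$ is an extension of the trace defect $g \coloneqq p - \tr u$, chosen so small in $L^1(\o;\R^M)$ that it does not disturb the convergence $u_n \to u$, yet cheap enough in total variation to leave the gradient bound sharp. (The conclusion $u_n \to u$ is surely intended, the symbol $w$ in the statement being a typo.)

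For the first piece I would invoke \Cref{11approx}: for each $n$ it furnishes $v_n \in W^{1,1}(\o;\R^M)$ with $\tr v_n = \tr u$, with $v_n \to u$ in $L^1(\o;\R^M)$, and with $\int_\o |\nabla v_n|\,dx \to |Du|(\o)$. For the second piece, note that $g = p - \tr u$ belongs to $L^1(\partial\o;\R^M)$, since $p \in L^1(\partial\o;\R^M)$ by hypothesis and $\tr u \in L^1(\partial\o;\R^M)$ by \Cref{GGT}. Applying \Cref{giusti-C1} with parameter $\e = 1/n$ — and crucially using the improved constant $C = 1 + \e$, which is available precisely because $\partial\o$ is almost of class $C^1$ — I would obtain $w_n \in W^{1,1}(\o;\R^M)$ with $\tr w_n = g$ satisfying
\[
\int_\o |w_n|\,dx \le \frac{1}{n}\int_{\partial\o}|p - \tr u|\,d\hno, \qquad \int_\o |\nabla w_n|\,dx \le \Big(1 + \tfrac{1}{n}\Big)\int_{\partial\o}|p - \tr u|\,d\hno.
\]

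With $u_n \coloneqq v_n + w_n$, the three required properties follow by direct estimates. The trace is exact, since $\tr u_n = \tr v_n + \tr w_n = \tr u + (p - \tr u) = p$. For the $L^1$ convergence, $\|u_n - u\|_{L^1} \le \|v_n - u\|_{L^1} + \|w_n\|_{L^1} \to 0$, as the first term vanishes by strict convergence and the second is dominated by $\tfrac{1}{n}\int_{\partial\o}|p - \tr u|\,d\hno$. Finally, from subadditivity of the total variation together with the two gradient bounds,
\[
\int_\o |\nabla u_n|\,dx \le \int_\o |\nabla v_n|\,dx + \int_\o |\nabla w_n|\,dx,
\]
and passing to the $\limsup$ yields exactly $|Du|(\o) + \int_{\partial\o}|p - \tr u|\,d\hno$.

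The argument is short once these two ingredients are in place, so there is no genuinely hard analytic step remaining; the entire difficulty has been front-loaded into \Cref{giusti-C1}. The one point that must be handled with care is that the \emph{sharp} gradient bound requires the extension constant to be $1 + \e$ rather than a fixed $C > 1$: with a merely Lipschitz domain one would only obtain the weaker estimate $|Du|(\o) + C\int_{\partial\o}|p - \tr u|\,d\hno$, so the hypothesis that $\partial\o$ be almost of class $C^1$ is exactly what makes the stated inequality attainable. A diagonalization in the parameter $\e = 1/n$ then closes the proof.
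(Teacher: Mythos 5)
Your proposal is correct and follows essentially the same route as the paper's proof: decompose $u_n = v_n + w_n$ with $v_n$ a strict, trace-preserving approximation from \Cref{11approx} and $w_n$ an extension of the defect $g = p - \tr u$ from \Cref{giusti-C1} with constant $1 + \tfrac{1}{n}$. Your remarks on the typo $w$ versus $u$ and on the necessity of the sharp extension constant for almost-$C^1$ boundaries are both accurate.
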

\begin{proof}
Set $g \coloneqq p - \tr u$. Then, by \Cref{giusti-C1}, for every $n \in \N$ we can find a function $w_n \in W^{1,1}(\o; \R^M)$ such that $\tr w_n = g$ and furthermore 
\begin{align*}
\int_{\o} |w_n(x)|\,dx & \le \frac{1}{n} \int_{\partial \o} |g(x)|\,d\hno, \\
\int_{\o} |\nabla w_n(x)|\,dx & \le \left(1 + \frac{1}{n}\right) \int_{\partial \o} |g(x)|\,d\hno.
\end{align*}
Moreover, \Cref{11approx} gives the existence of a sequence $\{v_n\}_{n \in \N} \subset W^{1,1}(\o; \R^M)$ with $\tr v_n = \tr u$ and such that $v_n \to u$ strictly in $BV(\o; \R^M)$. Set $u_n \coloneqq v_n + w_n$. Then $u_n \in W^{1,1}(\o; \R^M)$, $\tr u_n = p$, and furthermore 
\begin{align*}
\limsup_{n \to \infty} \int_{\o} |\nabla u_n(x)|\,dx & \le \limsup_{n \to \infty}\int_{\o}|\nabla v_n(x)|\,dx + \limsup_{n \to \infty} \int_{\o} |\nabla w_n(x)|\,dx \\
& \le |Du|(\o) + \int_{\o}|p(x) - \tr u(x)|\,d\hno.
\end{align*}
Similarly, one can show that $u_n \to u$ in $L^1(\o; \R^M)$; thus, the sequence $\{u_n\}_{n \in \N}$ has all the desired properties. 
\end{proof}

%%%%%%%%%%%%%%%%%%%%%%%%%%%%%%%%%%%%%%%%%%%%
%%%%%%%%%%%%%%%%%%%%%%%%%%%%%%%%%%%%%%%%%%%%

\subsection{Sharp trace inequalities}
\label{sect-sharp-ineq}
In this subsection we record some fundamental results concerning the attainability of a trace inequality of the form
\begin{equation}
\label{QC}
\int_{\partial \o} |\tr u(x)|\,d\hno \le Q |Du|(\o) + C\int_{\o}|u(x)|\,dx,
\end{equation}
where $Q$ and $C$ are positive constants, independent of $u$.
Furthermore we discuss the existence of an optimal constant $Q$, denoted below by $Q_{\partial \o}$, and report its explicit value for certain geometries. A refined version of inequality \eqref{QC}, where the local geometry of the set $\o$ is taken into consideration, is also presented below (see \Cref{local-AG}). 

In the classical paper \cite{MR555952}, Anzellotti and Giaquinta identified necessary and sufficient conditions that characterize the class of sets for which the trace operator is well defined and continuous with respect to strict convergence in $BV$. Roughly speaking, the sets which satisfy these conditions are the ones for which an inequality of the form \eqref{QC} holds. Before we state their precise results, let us remark that the definition of $q_{\partial \o}$ (see \Cref{q-def}) can be readily extended to include the case where $\o$ is a Caccioppoli set. Moreover, throughout the following we let 
\begin{equation}
\label{Q-def}
Q_{\partial\o} \coloneqq \sup \left\{ q_{\partial\o}(x) : x \in \partial \o \right\}.
\end{equation}
The following theorem gives a precise connection between the constant $Q_{\partial \o}$ and the trace inequality \eqref{QC}. For a proof we refer to Theorem 4 and Theorem 5 in \cite{MR555952}.

\begin{theorem}
\label{thm:trace}
Let $\o$ be an open subset of $\R^N$ with $\hno(\partial \o) < \infty$ and $Q_{\partial\o} < \infty$. Then, for every $\e > 0$ there exists $C(\o,\e) > 0$ such that
\begin{equation}\label{eq:traceinequality}
\int_{\f \o} |\tr u(x)| \,d\hno \le (Q_{\partial \o} + \e) |Du|(\o) + C(\o, \e) \int_\o |u(x)| \,dx
\end{equation}
for all $u \in BV(\o;\R^M)$. Here $\f \o$ denotes the reduced boundary of $\o$. Moreover, if in addition $\o$ is bounded and satisfies $|D\ca_{\o}|(\R^N) = \hno(\partial \o)$, then an inequality of the form \eqref{QC} holds true if and only if $Q_{\partial\o} < \infty$. In particular, $Q_{\partial \o}$ is the infimum among all the constants $Q$ for which there exists $C$ with the property that a bound of the form \eqref{QC} holds for every $u \in BV(\o; \R^M)$.
\end{theorem}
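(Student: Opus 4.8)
The plan is to peel the function-valued inequality down to the set-based statement that is literally the content of \Cref{q-def}, to reassemble the global estimate by a partition of unity, and finally to obtain sharpness of $Q_{\partial\o}$ by testing on characteristic functions. As a first reduction, since the trace commutes with the modulus, $|\tr u| = \tr|u|$ $\hno$-a.e.\ on $\partial\o$, and since $|D|u||(\o) \le |Du|(\o)$, it suffices to establish \eqref{eq:traceinequality} for a scalar $u \in BV(\o)$ with $u \ge 0$. This reduction is what makes the problem tractable, because $q_{\partial\o}$ is defined through scalar characteristic functions and the modulus is the only bridge from the vectorial setting.

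\emph{Local estimate (the crux).} Fix $x \in \partial\o$ and $\e>0$. The supremum appearing in \Cref{q-def} is non-increasing in $\rho$, so its limit equals its infimum and I may pick $\rho_x>0$ for which the scale-$\rho_x$ supremum does not exceed $q_{\partial\o}(x)+\e$. For $u \ge 0$ with compact support in $B(x,\rho_x)$, I would combine the layer-cake representation of the trace with the coarea formula,
\[
\int_{\partial\o}\tr u\dhno = \int_0^\infty\!\!\int_{\partial\o}\tr\ca_{\{u>t\}}\dhno\,dt, \qquad |Du|(\o)=\int_0^\infty|D\ca_{\{u>t\}}|(\o)\,dt .
\]
Because $u$ is supported in $B(x,\rho_x)$, for a.e.\ $t$ the superlevel set $\{u>t\}$ is an admissible competitor in \Cref{q-def} at scale $\rho_x$, so that $\int_{\partial\o}\tr\ca_{\{u>t\}}\dhno \le (q_{\partial\o}(x)+\e)\,|D\ca_{\{u>t\}}|(\o)$; integrating in $t$ yields $\int_{\partial\o}\tr u\dhno \le (q_{\partial\o}(x)+\e)\,|Du|(\o)$, with no $L^1$ remainder at this local stage. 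The layer-cake identity for the trace is the delicate point, and I would justify it by mollifying $u$ (keeping the support inside a slightly larger ball), where the identity is classical, and passing to the limit via strict convergence together with the $L^1$-convergence of the traces supplied by \Cref{smoothapprox}.

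\emph{Globalization.} By compactness of $\partial\o$ I extract a finite subcover $\{B(x_j,\rho_{x_j})\}$ and a subordinate partition of unity $\{\psi_j\}$ with $0\le\sum_j\psi_j\le 1$ on $\o$ and $\sum_j\psi_j=1$ on $\partial\o$. Applying the local estimate to each $\psi_j u \ge 0$ and using the product rule $|D(\psi_j u)|(\o)\le \int_\o\psi_j\,d|Du|+\int_\o|u|\,|\nabla\psi_j|\,dx$, then summing over $j$ and bounding $q_{\partial\o}(x_j)\le Q_{\partial\o}$, produces \eqref{eq:traceinequality} with $C(\o,\e)=(Q_{\partial\o}+\e)\max_j\|\nabla\psi_j\|_{L^\infty}$ up to the bounded overlap of the cover; here the distinction between $\partial\o$ and $\f\o$ is immaterial, since they agree up to an $\hno$-null set under the stated hypotheses.

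\emph{Necessity and optimality.} Assume now a bound of the form \eqref{QC} holds with some constant $Q$, and test it on $u=\ca_E$ for $E\subset B(x,\rho)$ with $\mathcal{L}^N(E)>0$ and finite perimeter: this gives $\int_{\partial\o}\ca_E\dhno \le Q\,|D\ca_E|(\o)+C\,\mathcal{L}^N(E)$. The relative isoperimetric inequality furnishes $\mathcal{L}^N(E)\le C'\rho\,|D\ca_E|(\o)$, so dividing by $|D\ca_E|(\o)$, taking the supremum over such $E$, and letting $\rho\to0^+$ yields $q_{\partial\o}(x)\le Q$ for every $x$, hence $Q_{\partial\o}\le Q$. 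Combined with the sufficiency, according to which $Q_{\partial\o}+\e$ is admissible for every $\e>0$, this identifies $Q_{\partial\o}$ as the infimum of all admissible constants. The main obstacle throughout is the local coarea step — establishing the layer-cake formula for the trace and guaranteeing that the superlevel sets remain confined to $B(x,\rho_x)$ so that \Cref{q-def} applies — whereas the globalization and the converse are comparatively routine.
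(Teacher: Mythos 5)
The paper does not prove this theorem itself: it is quoted from Anzellotti and Giaquinta (Theorems 4 and 5 in \cite{MR555952}). Your argument is, however, essentially the same machine the paper \emph{does} run in detail for the closely related \Cref{local-AG}: layer-cake representation of the trace, coarea formula, the local competitor estimate from \Cref{q-def}, a partition of unity to globalize, and \Cref{smoothapprox} to pass from smooth functions to $BV$. In the setting the paper actually works in (bounded domains with Lipschitz boundary) your proof is correct in outline, and the converse direction via testing \eqref{QC} on characteristic functions and absorbing the volume term with a relative isoperimetric inequality is the standard and correct way to see that $Q_{\partial\o}$ is the optimal constant. (Two small slips: the supremum in \Cref{q-def} is non-\emph{decreasing} in $\rho$, not non-increasing --- the conclusion you draw is unaffected; and the constant you display for $C(\o,\e)$ should also absorb the contribution of the Lipschitz constants through the covering, not just $\max_j\|\nabla\psi_j\|_{L^\infty}$.)

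Where your proof falls short of the statement as written is in its generality. The first part of the theorem assumes only that $\o$ is open with $\hno(\partial\o)<\infty$ and $Q_{\partial\o}<\infty$: $\o$ need not be bounded, $\partial\o$ need not be compact, and $\partial\o$ need not be Lipschitz. This breaks several steps you rely on: the finite subcover of $\partial\o$ in the globalization; the identification of $\f\o$ with $\partial\o$ up to $\hno$-null sets (which is exactly why the statement integrates over $\f\o$ and is only upgraded to $\partial\o$ under the extra hypothesis $|D\ca_\o|(\R^N)=\hno(\partial\o)$ of the second part); the approximation result \Cref{smoothapprox} with $L^1$-convergent traces, which the paper only states for Lipschitz boundaries; and the relative isoperimetric inequality near $\partial\o$, which is not free for an arbitrary open set and is precisely where the hypotheses ``$\o$ bounded'' and ``$|D\ca_\o|(\R^N)=\hno(\partial\o)$'' must enter in the necessity argument. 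Handling these points is the real content of \cite{MR555952}; your write-up proves the theorem in the regular case but silently assumes that regularity throughout.
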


\begin{remark}\label{rem:Q}
Note that, roughly speaking, the value of $q_{\partial\o}(x)$ quantifies the best way to locally encapsulate $\partial\o$ with a set $E$ in a neighborhood of $x$. The function $q_{\partial\o}$, and consequently the constant $Q_{\partial \o}$, can be computed explicitly (or at least estimated) in several cases of interest. We record some of these computations below.
\begin{itemize}
\item[$(i)$] If $\partial \o$ is of class $C^1$ in a neighborhood of $x$ then $q_{\partial\o}(x) = 1$ (see Proposition 1.4 in \cite{MR482506}).
\item[$(ii)$] Let $N = 2$ and assume that $\o \coloneqq \{(x_1, x_2) : x_2 > \ell |x_1|\}$. Then (see at the end of Section 1 in \cite{MR482506})
\[
q_{\partial\o}(0) = 
\left\{
\begin{array}{ll}
\displaystyle \sqrt{1 + \ell^2} & \text{ if } \ell > 0, \\
\displaystyle 1 & \text{ if } \ell \le 0. 
\end{array}
\right.
\]
In particular, it is interesting to notice that $Q_{\partial \o}$ can be strictly smaller than the Lipschitz constant of the domain (see \Cref{lipQ}).
\item[$(iii)$] If $\o$ is an open subset of $\R^N$ with uniformly Lipschitz continuous boundary, then we have $Q_{\partial \o} < \infty$ (see, for example, Theorem 18.22 in \cite{L}).
\end{itemize}
\end{remark}

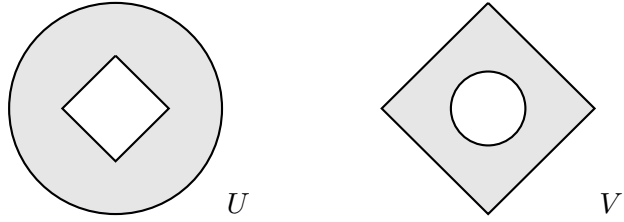
\begin{figure}
\begin{center}
\begin{tikzpicture}[blend group=normal, scale=0.7]
%\draw[help lines] (0,0) grid (14,8);
\draw [thick, fill = black, fill opacity = 0.1] (3, 3) circle (2cm);
\draw [thick, fill = white] (3,4) -- (4, 3) -- (3, 2) -- (2, 3) -- (3, 4);
\node [right] at (4.9, 1.2) {$U$};
\draw [thick, fill = black, fill opacity = 0.1] (10,5) -- (12, 3) -- (10, 1) -- (8, 3) -- (10, 5);
\draw [thick, fill = white] (10, 3) circle (0.7cm);
\node [right] at (11.9, 1.2) {$V$};
\end{tikzpicture}
\caption{An example of two domains with the same Lipschitz constant, but different trace embedding constant. Indeed, $Q_{\partial U} = 1$ and $Q_{\partial V} = \sqrt{2}$.}
\label{lipQ}
\end{center}
\end{figure}

The following lemma is due to Giusti (see Lemma 1.2 in \cite{MR482506}).
\begin{lemma}
\label{local-AG}
Let $\o$ be a bounded open subset of $\R^N$ with Lipschitz continuous boundary. For every $\e > 0$ and every continuous function $s \colon \partial \o \to [0, \infty)$ such that 
\begin{equation}
\label{sq}
s(x)q_{\partial\o}(x) \le 1 - 2 \e
\end{equation}
for all $x\in\partial\o$, there exists a constant $C(\o, \e, s) > 0$ with the property that for all $u \in BV(\o; \R^M)$ we have
\[
\int_{\partial \o} s(x)|\tr u(x)|\,d\hno \le (1 - \e)|Du|(\o) + C(\o, \e, s) \int_{\o}|u(x)|\,dx.
\]
\end{lemma}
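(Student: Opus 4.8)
The plan is to prove this local trace inequality by a covering and localization argument, using the definition of $q_{\partial\o}$ to control the trace contribution on small balls, and then summing up with a partition of unity. The overall strategy is to transform the weighted boundary term into a perimeter-type quantity to which the definition of $q_{\partial\o}$ applies, while keeping track of the weight $s$.

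\emph{Step 1: Localization via the definition of $q_{\partial\o}$.} First I would fix $x_0 \in \partial\o$ and observe that, by \Cref{q-def}, for every $\eta > 0$ there exists $\rho = \rho(x_0) > 0$ such that for all sets of finite perimeter $E \subset B(x_0, \rho)$ with $\mathcal{L}^N(E) > 0$ we have
\[
\int_{\partial\o} \ca_E \,d\hno \le (q_{\partial\o}(x_0) + \eta)\, |D\ca_E|(\o).
\]
By the continuity of $s$ and the hypothesis \eqref{sq}, shrinking $\rho$ if needed, we may assume $s(x) \le s(x_0) + \eta$ on $B(x_0,\rho) \cap \partial\o$ and that $(s(x_0)+\eta)(q_{\partial\o}(x_0)+\eta) \le 1 - \e$ for all $x_0$, by taking $\eta$ small relative to $\e$. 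The key idea is then that the weighted trace integral over $B(x_0,\rho) \cap \partial\o$ is bounded by $(1-\e)$ times the total variation in $B(x_0,\rho)\cap\o$, up to the lower-order $L^1$ term. To pass from characteristic functions to a general $u \in BV$, I would apply the above inequality to the super-level sets $E_t = \{u > t\}$ (working componentwise or via the scalar reduction $|\tr u| \le \sum_i |\tr u^i|$) and integrate in $t$ using the coarea formula $|Du|(\o) = \int_{\R} |D\ca_{\{u>t\}}|(\o)\,dt$ together with the layer-cake representation of $\int_{\partial\o} s|\tr u|\,d\hno$.

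\emph{Step 2: Partition of unity and summation.} By compactness of $\partial\o$, extract a finite subcover $\{B(x_j, \rho_j)\}_{j=1}^k$ and a subordinate partition of unity $\{\psi_j\}$. For each $j$ I would apply the localized estimate of Step 1 to the function $\psi_j u$ (or to $u$ restricted to the ball, handled via the localized versions of the trace and coarea formulas), producing boundary integrals weighted by $s$ bounded by $(1-\e)|Du|$ on the corresponding ball plus an $L^1$ remainder. The cutoffs introduce commutator terms of the form $\int |u|\,|\nabla\psi_j|$, which are absorbed into the constant $C(\o,\e,s)\int_\o |u|$. Summing over $j$ and using that the $\psi_j$ form a partition of unity recovers the global left-hand side, while the total-variation contributions combine (with finite overlap controlled by the covering) to give the factor $(1-\e)$ on $|Du|(\o)$.

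\emph{Main obstacle.} The delicate point is handling the overlaps of the covering balls so that the total-variation terms genuinely sum to $(1-\e)|Du|(\o)$ rather than a larger multiple: naive summation over $k$ balls would inflate the constant by the number of overlaps. The hard part will be organizing the estimate so that the gradient of $u$ is counted essentially once. This is typically resolved by working with a single weight function and applying the super-level set inequality globally after localizing the \emph{constant} (i.e.\ using that $s(x) q_{\partial\o}(x) \le 1-2\e$ holds pointwise, so the effective multiplier can be taken below $1-\e$ uniformly), rather than patching ball-by-ball bounds on $|Du|$. A clean route is to prove the inequality first for the characteristic function of a set $E$ of finite perimeter with the sharp local constant via a Besicovitch-type covering of $\supp(\ca_E)\cap\partial\o$ by balls on which the defining ratio in \Cref{q-def} is nearly attained, thereby obtaining $\int_{\partial\o} s\,\ca_E\,d\hno \le (1-\e)|D\ca_E|(\o) + C\,\mathcal{L}^N(E)$ directly, and only afterward integrate in $t$ via coarea. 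This avoids multiplicities in the $|Du|$ term and isolates all overlap-dependent constants into the lower-order $L^1$ term.
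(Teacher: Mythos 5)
Your proposal is correct and follows essentially the same route as the paper: localize using \Cref{q-def} together with the continuity of $s$ so that $(s(z)+\eta)(q_{\partial\o}(z)+\eta)\le 1-\e$ on each small ball, prove the estimate for functions supported in such a ball via the layer-cake formula and the coarea formula applied to the level sets of $|u|$, and then patch with a finite subcover and a subordinate partition of unity, absorbing the commutator terms $\int_\o|u|\,|\nabla\psi_j|\,dx$ into $C(\o,\e,s)\int_\o|u|\,dx$. Two small caveats. First, the ``main obstacle'' you describe is not actually present: since the partition of unity satisfies $\sum_j\psi_j\le 1$ on $\o$, one has $\sum_j|D(\psi_j u)|(\o)\le\int_\o\bigl(\sum_j\psi_j\bigr)\,d|Du|+\sum_j\int_\o|u|\,|\nabla\psi_j|\,dx\le|Du|(\o)+C\int_\o|u|\,dx$, so the gradient is automatically counted once and no Besicovitch-type refinement is needed; this is exactly how the paper concludes (as in the proof of \Cref{giusti-C1}, followed by \Cref{smoothapprox} for general $BV$ functions). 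Second, of the two scalar reductions you mention parenthetically, only the reduction to the modulus $|u|$ (using $|\nabla|u||\le|\nabla u|$, as in the paper) preserves the sharp constant; the componentwise bound $|\tr u|\le\sum_i|\tr u^i|$ would produce $(1-\e)\sum_i|Du^i|(\o)$, which can exceed $(1-\e)|Du|(\o)$ by a dimensional factor and hence does not yield the stated inequality.
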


\begin{proof}
Fix $\delta > 0$ such that 
\begin{equation}
\label{delta-small-def}
\delta (\|s\|_{L^{\infty}(\partial \o)} + Q_{\partial \o}) + \delta^2 \le \e.
\end{equation}
By the definition of $q_{\partial\o}$ (see \Cref{q-def}), for each $z \in \partial \o$ we can find $R_z > 0$ be such that for every $E \subset B(z, R_z)$  
\begin{equation}
\label{qr}
\int_{\partial \o} \tr \ca_E(x) \,d\hno \le (q_{\partial\o}(z) + \delta) |D\ca_{E}|(\o).
\end{equation}
Moreover, since by assumption $s$ is continuous, by eventually replacing $R_z$ with a smaller number we can assume that $s(x) \le s(z) + \delta$ for all $x \in B(z, R_z)$.
Assume first that $u \in W^{1,1}(\o; \R^M) \cap C(\overline{\o}; \R^M)$ and that its support is contained in the ball $B(z, R_z)$. Then, by the layer cake formula, \eqref{qr}, and the coarea formula we get
\begin{align}
\int_{\partial \o} |u(x)|\,d\hno & = \int_0^{\infty} \hno (\{x \in \partial \o : |u(x)| > t\})\,dt \notag \\
& = \int_0^{\infty} \int_{\partial \o} \ca_{\{|\tr u| > t\}}(x)\,d\hno dt \notag \\
& \le (q_{\partial \o}(z) + \delta) \int_0^{\infty}  |D\ca_{\{|u| > t\}}|(\o)\,dt \notag \\
& = (q_{\partial \o}(z) + \delta) \int_0^{\infty} \hno (\{x \in \o : |u(x)| = t\})\,dt \notag \\
& = (q_{\partial \o}(z) + \delta) \int_{\o}|\nabla u(x)|\,dx. \label{giusti-proof}
\end{align}
Therefore, combining \eqref{sq}, \eqref{delta-small-def}, and \eqref{giusti-proof} we arrive at
\begin{equation}
\label{supp-in-B}
\int_{\partial \o} s(x)|u(x)|\,d\hno \le (s(z) + \delta)(q_{\partial\o}(z) + \delta) \int_{\o}|\nabla u(x)|\,dx \le (1 - \e)\int_{\o}|\nabla u(x)|\,dx.
\end{equation}
To remove the assumption that the support of $u$ is contained in $B(z, R_z)$, we consider the collection of sets $\{B(z, R_z)\}_{z \in \partial \o}$ and extract a finite cover of $\partial \o$, namely $\{B_i\}_{i \le k}$. Let $\{\psi_i\}_{i \le k}$ be a partition of unity on $\partial \o$ subordinated to $\{B_i\}_{i \le k}$. We can then apply \eqref{supp-in-B} to each $u_i \coloneqq u \psi_i$ and conclude as in the proof of \Cref{giusti-C1}. The estimate for a general function $u \in BV(\o; \R^M)$ is a direct consequence of \Cref{smoothapprox}.
\end{proof}

In view of \Cref{rem:Q} $(i)$, if $\partial \o$ is of class $C^1$ we may then take $Q_{\partial \o} = 1$. This result appears also in a more recent paper by Motron (see Theorem 2.7 in \cite{MR1893737}), where it is derived from the following local version of \eqref{eq:traceinequality}. As a note to the reader, we believe it is important to mention that the cited result is stated for a domain $\o$ that is bounded and \emph{piecewise} of class $C^1$. However, this is not true in general as shown by (a suitable modification of) Modica's example in the square (see Remark 1.3 in \cite{MR921549}; see also \eqref{E1}). 

\begin{proposition}
\label{local-trace}
Suppose $U \coloneqq \{x = (x', x_N) : x_N > f(x')\}$, where $f \colon \R^{N - 1} \to \R$ is of class $C^1$. Then, for every $u \in BV(U; \R^M)$ we have 
\[
\int_{\partial U}|\tr u(x)|\,d\hno \le \sqrt{1 + \|\nabla f\|_{L^{\infty}}} |Du|(U).
\]
\end{proposition}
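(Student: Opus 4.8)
The plan is to flatten the graph $\partial U$ by a shear transformation and thereby reduce the estimate to the elementary trace inequality on a half-space, where the optimal constant equals $1$. We may assume $\|\nabla f\|_{L^\infty} < \infty$, since otherwise the right-hand side is infinite and there is nothing to prove. Consider the map $\Phi(x) \coloneqq (x', x_N - f(x'))$, which is a bi-Lipschitz bijection of $\R^N$ with $|\det \nabla \Phi| \equiv 1$ and which carries $U$ onto the half-space $V \coloneqq \{y \in \R^N : y_N > 0\}$ and the graph $\partial U$ onto the hyperplane $\{y_N = 0\}$. Setting $\tilde{u} \coloneqq u \circ \Phi^{-1}$, the unit-Jacobian and bi-Lipschitz properties of $\Phi$ guarantee that $\tilde{u} \in BV(V; \R^M)$ with $\|\tilde{u}\|_{L^1(V)} = \|u\|_{L^1(U)}$.

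The crucial point is that $\Phi$ shears only in the $x_N$-direction, so it preserves the normal derivative. First I would record the pointwise identity $(\partial_{y_N} \phi) \circ \Phi = \partial_{x_N}(\phi \circ \Phi)$, valid for every $\phi \in C^1_c(V)$, which follows from $\partial_{x_N} \Phi_k \equiv \delta_{kN}$. Testing the distributional derivative $D_N \tilde{u}$ against $\phi$ and changing variables through $\Phi$ (whose Jacobian is $1$) then gives $\langle D_N \tilde{u}, \phi \rangle = \langle D_N u, \phi \circ \Phi \rangle$, that is, $D_N \tilde{u} = \Phi_{\#}(D_N u)$ as $\R^M$-valued measures. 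Since $\Phi$ is a homeomorphism this yields $|D_N \tilde{u}|(V) = |D_N u|(U) \le |Du|(U)$, the last inequality because the $N$-th column of $Du$ has norm bounded by $|Du|$.

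Next I would establish the \textbf{half-space lemma}: for every $v \in BV(V; \R^M)$ one has $\int_{\{y_N = 0\}} |\tr v(y', 0)|\,dy' \le |D_N v|(V)$. By an interior-mollification argument in the spirit of \Cref{smoothapprox} (shifting slightly into $V$ and convolving, which preserves strict convergence and convergence of traces), it suffices to treat $v \in W^{1,1}(V; \R^M) \cap C(\overline{V}; \R^M)$. For such $v$, for almost every $y'$ the function $t \mapsto v(y', t)$ is absolutely continuous on $(0, \infty)$, lies in $L^1(0,\infty)$, and has derivative in $L^1(0, \infty)$; hence it admits a limit as $t \to \infty$, which must be $0$. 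The fundamental theorem of calculus then gives $v(y', 0) = - \int_0^\infty \partial_N v(y', t)\,dt$, and integrating $|v(y', 0)| \le \int_0^\infty |\partial_N v(y', t)|\,dt$ over $y' \in \R^{N - 1}$ yields the claim.

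Finally I would combine these ingredients. Since $\Phi$ maps $\partial U$ onto $\{y_N = 0\}$, the traces transform as $\tr u(x', f(x')) = \tr \tilde{u}(x', 0)$, and the area formula gives $d\hno = \sqrt{1 + |\nabla f(x')|^2}\,dx'$ on the graph. Therefore
\[
\int_{\partial U} |\tr u(x)|\,d\hno = \int_{\R^{N - 1}} |\tr \tilde{u}(x', 0)| \sqrt{1 + |\nabla f(x')|^2}\,dx' \le \sqrt{1 + \|\nabla f\|_{L^\infty}^2} \int_{\R^{N - 1}} |\tr \tilde{u}(x', 0)|\,dx',
\]
and the half-space lemma together with the identity $|D_N \tilde{u}|(V) = |D_N u|(U) \le |Du|(U)$ bounds the remaining integral by $|Du|(U)$; this produces exactly the constant $\sqrt{1 + \|\nabla f\|_{L^\infty}^2}$ (matching the surface-area element on $\partial U$). \emph{The main obstacle} I anticipate is the rigorous bookkeeping of the trace under the bi-Lipschitz flattening — namely that boundary integrals transform with the surface-area factor and that $\tr u \circ \Phi^{-1}$ coincides with $\tr \tilde{u}$ — together with making the half-space estimate legitimate on the \emph{unbounded} domain $V$, where one must control the behaviour of the mollified traces near infinity.
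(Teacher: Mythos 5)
Your argument is correct, and it is worth noting up front that the paper does not actually prove this proposition: it is quoted from Motron (Theorem 2.7 in \cite{MR1893737}), so there is no internal proof to compare against. Your route --- shearing the epigraph onto the half-space $V=\{y_N>0\}$ via $\Phi(x)=(x',x_N-f(x'))$, observing that the unit-Jacobian shear pushes $D_Nu$ forward to $D_N\tilde u$ so that $|D_N\tilde u|(V)=|D_Nu|(U)\le|Du|(U)$, proving the half-space trace bound by one-dimensional slicing, and then paying the surface-area factor $\sqrt{1+|\nabla f|^2}$ when returning to the graph --- is self-contained and elementary; in particular it avoids both the Anzellotti--Giaquinta machinery ($q_{\partial\o}$, \Cref{thm:trace}) and the layer-cake/coarea argument the paper uses for the related \Cref{local-AG}, and, unlike \Cref{local-AG}, it produces no lower-order term, which is the whole point of the statement. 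The price is the bookkeeping you flag yourself (transformation of traces under the bi-Lipschitz flattening, mollification on the unbounded half-space with control of the traces); all of this is standard, and your identification of $D_N$ as the only component of $Du$ that is actually needed is exactly what makes the constant come out right.

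One substantive remark: your proof yields the constant $\sqrt{1+\|\nabla f\|_{L^\infty}^2}$, whereas the proposition as printed has $\sqrt{1+\|\nabla f\|_{L^\infty}}$. Yours is the correct statement. Indeed, the cone example behind \Cref{rem:Q}$(ii)$ (take $E$ to be the triangle between the graph of $x_1\mapsto\ell|x_1|$ and a horizontal segment at height $h$, smooth the vertex to make $f$ of class $C^1$ with $\|f'\|_{L^\infty}=\ell$, and let $h\to\infty$) produces ratios of trace to interior perimeter approaching $\sqrt{1+\ell^2}$, so no constant below $\sqrt{1+\|\nabla f\|_{L^\infty}^2}$ can work in an inequality without a lower-order term; the printed exponent is a typo, and the bound as displayed is in fact false whenever $\|\nabla f\|_{L^\infty}>1$. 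This is harmless for the rest of the paper: the only place the proposition is invoked is Step 2 of the proof of \Cref{liminf-C1pw}, where the local charts satisfy $\|\nabla f\|_{L^\infty}\le\e$, and there both constants are bounded by $1+\e$.
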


% Notice \Cref{thm:trace} follows immediately by \Cref{local-AG} by considering, for a given $\e>0$, the nonnegative continuous function $s \colon \partial \o \to \R$ defined, for all $x\in\partial\o$, as $s(x) \coloneqq (Q_{\partial \o} + \e)^{-1}$.

The next lemma can be regarded as a Gauss--Green formula for vector-valued $BV$ functions in a regular domain with boundary of class $C^2$ (compare it with \Cref{GGT}). The result is due to Fonseca and Leoni (see Lemma 2.1 in \cite{MR1656999}).

\begin{lemma}
\label{FL-lem}
Let $\o$ be a bounded open subset of $\R^N$ and assume that $\partial \o$ is of class $C^2$. Then there exists $\varphi \in C^1_0(\R^N; \R^N)$ with $|\varphi(x)| < 1$ in $\o$ such that for any $u \in BV(\o; \R^M)$
\[
\int_{\partial \o}\tr u(x)\,d\hno = \int_{\o} \varphi(x) \cdot dDu(x) + \int_{\o} \div \varphi(x) u(x)\,dx.
\]
\end{lemma}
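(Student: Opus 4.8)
The plan is to reduce the statement to the construction of a single, explicit vector field and then to invoke the Gauss--Green formula already recorded in \Cref{GGT}. Indeed, \Cref{GGT}$(ii)$ asserts that for every $\varphi \in C^1(\R^N; \R^N)$ and every $u \in BV(\o; \R^M)$ one has
\[
\int_{\partial \o} (\varphi(x) \cdot \nu_{\partial \o}(x))\, \tr u(x)\,d\hno = \int_{\o} \varphi(x) \cdot dDu(x) + \int_{\o} \div \varphi(x)\, u(x)\,dx.
\]
Consequently, it suffices to produce a field $\varphi \in C^1_0(\R^N; \R^N)$ with the two properties that $\varphi \cdot \nu_{\partial \o} = 1$ on $\partial \o$ and $|\varphi(x)| < 1$ for every $x \in \o$: the first forces the left-hand side above to collapse to $\int_{\partial \o} \tr u(x)\,d\hno$, while the second is exactly the pointwise bound claimed in the statement.

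First I would build $\varphi$ from the signed distance function. Since $\partial \o$ is compact and of class $C^2$, a standard result guarantees the existence of $\delta_0 > 0$ such that the signed distance
\[
d(x) \coloneqq \dist(x, \partial \o) \quad (x \notin \o), \qquad d(x) \coloneqq - \dist(x, \partial \o) \quad (x \in \o)
\]
is of class $C^2$ on the tubular neighborhood $\{|d| < \delta_0\}$, where moreover $|\nabla d| \equiv 1$ and $\nabla d$ extends the outer unit normal, so that $\nabla d = \nu_{\partial \o}$ on $\partial \o$. Fix $\delta \in (0, \delta_0)$ and choose a cutoff $\chi \in C^\infty(\R; [0,1])$ with $\chi(0) = 1$, $\supp \chi \subset (-\delta, \delta)$, and $\chi(t) < 1$ for every $t$ with $0 < |t| < \delta$. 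I would then set
\[
\varphi(x) \coloneqq \chi(d(x))\, \nabla d(x)
\]
on $\{|d| < \delta_0\}$, and $\varphi \equiv 0$ elsewhere. Because $\chi$ and $\chi'$ vanish for $|t| \ge \delta$ while $d$ is $C^2$ on $\{|d| < \delta_0\}$, the field $\varphi$ is $C^1$ across the gluing region and has support contained in the compact set $\{|d| \le \delta\}$; hence $\varphi \in C^1_0(\R^N; \R^N)$.

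It then remains to verify the two defining properties. On $\partial \o$ we have $d = 0$, so $\varphi = \chi(0)\nabla d = \nu_{\partial \o}$ and therefore $\varphi \cdot \nu_{\partial \o} = |\nu_{\partial \o}|^2 = 1$. For the interior bound, fix $x \in \o$, so that $d(x) < 0$: if $|d(x)| \ge \delta$ then $\varphi(x) = 0$, while if $-\delta < d(x) < 0$ then $|\varphi(x)| = \chi(d(x))\,|\nabla d(x)| = \chi(d(x)) < 1$ by the strict inequality imposed on $\chi$ together with $|\nabla d| = 1$. In either case $|\varphi(x)| < 1$, as required. Substituting this $\varphi$ into the identity of \Cref{GGT}$(ii)$ and using $\varphi \cdot \nu_{\partial \o} \equiv 1$ on $\partial \o$ yields the claim. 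The only genuinely nontrivial ingredient is the $C^2$ regularity of the signed distance $d$ in a tubular neighborhood of $\partial \o$; this is precisely where the hypothesis that $\partial \o$ be of class $C^2$ (rather than merely $C^1$) becomes essential, since $\nabla d$ must be $C^1$ for $\varphi$ to meet the regularity demanded by \Cref{GGT}$(ii)$. Apart from this, the argument is an elementary verification, so I would expect no further obstacle.
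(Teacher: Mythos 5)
Your proof is correct. The paper does not prove this lemma itself---it imports it verbatim from Fonseca--Leoni (Lemma 2.1 in \cite{MR1656999})---and your construction $\varphi = \chi(d)\,\nabla d$, with $d$ the signed distance (which is $C^2$ in a tubular neighborhood precisely because $\partial\o$ is $C^2$) and $\chi$ a cutoff equal to $1$ only at $0$, is exactly the standard argument behind that reference; combined with \Cref{GGT}$(ii)$ and the identity $\varphi\cdot\nu_{\partial\o}\equiv 1$ on $\partial\o$, it yields the claim with no gaps. You also correctly isolate the one place where $C^2$ regularity (as opposed to $C^1$) is genuinely used, namely that $\nabla d$ must be $C^1$ for $\varphi$ to be admissible in \Cref{GGT}$(ii)$.
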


The following is a direct consequence of \Cref{FL-lem}.

\begin{theorem}
\label{trace-C2}
Let $\o$ be a bounded open subset of $\R^N$ and assume that $\partial \o$ is of class $C^2$. Then there exists a constant $C > 0$, which only depends on $\o$, such that
\[
\int_{\partial \o}|\tr u(x)|\,d\hno \le |Du|(\o) + C(\o) \int_{\o}|u(x)|\,dx
\]
holds for every $u \in BV(\o; \R^M)$.
\end{theorem}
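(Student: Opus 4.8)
The plan is to obtain the inequality as an immediate application of the Gauss--Green formula in \Cref{FL-lem}, applied not to $u$ itself but to the scalar function $|u|$, and to exploit that the vector field $\varphi$ furnished by that lemma depends only on $\o$ and satisfies $|\varphi| < 1$ in $\o$.

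Two standard facts from the theory of $BV$ functions underpin the argument, and I would record them first. Let $u \in BV(\o;\R^M)$ and set $v \coloneqq |u|$. Since $t \mapsto |t|$ is $1$-Lipschitz, $v \in BV(\o)$ and $|Dv|(\o) \le |Du|(\o)$. Moreover, its trace satisfies $\tr v = |\tr u|$ for $\hno$-a.e.\ $x \in \partial\o$: this follows from the characterization of the trace as the approximate limit of $u$ at $\hno$-a.e.\ boundary point, together with the elementary inequality $\big||u(y)| - |\tr u(x)|\big| \le |u(y) - \tr u(x)|$, which shows that $|\tr u(x)|$ is the approximate limit of $|u|$ at $x$. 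Alternatively one can argue by approximating $u$ strictly with functions in $W^{1,1}(\o;\R^M)\cap C(\overline{\o};\R^M)$ via \Cref{smoothapprox}, for which the identity is immediate from $(i)$ in \Cref{GGT}, and using the reverse triangle inequality on the boundary traces.

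With these at hand, I would apply \Cref{FL-lem} in the scalar case $M = 1$ to $v = |u|$, obtaining
\[
\int_{\partial\o}|\tr u(x)|\,d\hno = \int_{\partial\o}\tr v(x)\,d\hno = \int_{\o}\varphi(x)\cdot dDv(x) + \int_{\o}\div\varphi(x)\,v(x)\,dx.
\]
The two terms on the right are then estimated directly. For the first, the bounds $|\varphi| < 1$ in $\o$ and $|Dv|(\o) \le |Du|(\o)$ give
\[
\Big|\int_{\o}\varphi\cdot dDv\Big| \le \int_{\o}|\varphi|\,d|Dv| \le |Dv|(\o) \le |Du|(\o).
\]
For the second, since $\div\varphi$ is continuous with compact support it is bounded, whence
\[
\Big|\int_{\o}\div\varphi\,v\,dx\Big| \le \|\div\varphi\|_{L^\infty}\int_{\o}|u(x)|\,dx.
\]
Setting $C(\o) \coloneqq \|\div\varphi\|_{L^\infty}$, which depends only on $\o$ through $\varphi$, and combining the three displays yields the claim. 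The computation is entirely routine; the only point requiring a moment of care is the trace identity $\tr|u| = |\tr u|$ of the second step, which is where the passage to the modulus is justified, while everything else is a direct substitution into \Cref{FL-lem} using $|\varphi|\le 1$.
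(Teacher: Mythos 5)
Your proof is correct and follows essentially the same route as the paper: apply \Cref{FL-lem} to the scalar function $|u|$, use $|\varphi|\le 1$ to absorb the measure term into $|Du|(\o)$, and bound the divergence term by $\|\div\varphi\|_{L^\infty}$. The only cosmetic difference is that you work directly with a general $u\in BV(\o;\R^M)$ (justifying $|u|\in BV(\o)$, $|D|u||(\o)\le|Du|(\o)$, and $\tr|u|=|\tr u|$ up front), whereas the paper first treats $u\in W^{1,1}(\o;\R^M)\cap C(\overline{\o};\R^M)$ and then passes to the limit via \Cref{smoothapprox} --- the alternative you yourself mention.
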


\begin{proof}
Let $u \in W^{1,1}(\o; \R^M) \cap C(\overline{\o}; \R^M)$ and set $v(x) \coloneqq |u(x)|$. Then $v \in W^{1,1}(\o) \cap C(\overline{\o})$ and by \Cref{FL-lem} (applied to the scalar-valued function $v$), we obtain
\begin{align*}
\int_{\partial \o}|u(x)|\,d\hno & = \int_{\partial \o} v(x)\,d\hno \\
& = \int_{\o} \varphi(x) \cdot \nabla v(x)\,dx + \int_{\o}\div \varphi(x) v(x)\,dx \\
& \le \int_{\o}|\nabla v(x)|\,dx + \|\div \varphi\|_{L^\infty(\R^N)} \int_{\o}|u(x)|\,dx \\
& \le \int_{\o}|\nabla u(x)|\,dx + \|\div \varphi\|_{L^\infty(\R^N)} \int_{\o}|u(x)|\,dx.
\end{align*}
This shows that the desired inequality holds for every $u \in W^{1,1}(\o; \R^M) \cap C(\overline{\o}; \R^M)$. The estimate in the general case follows from an application of \Cref{smoothapprox}.
\end{proof}

\begin{remark}
\label{rem-sharp}
The proof of the liminf inequality, as presented in the next section, relies heavily on the trace inequalities provided by \Cref{local-AG} and \Cref{local-trace} for the case of almost $C^1$ domains (see \Cref{liminf-C1pw}), and \Cref{trace-C2} for domains with boundary of class $C^2$ (see \Cref{liminf-C2}). 

It is worth noting that while $Q_{\partial \o} = 1$ even when $\partial \o$ is only of class $C^1$, \Cref{trace-C2} shows that, with some additional regularity on $\partial \o$, it is possible to choose the constant $C(\o, \e)$ in \Cref{thm:trace} in such a way that 
\[
\limsup_{\e \to 0^+}C(\o, \e) < \infty.
\] 
This discrepancy is reflected in the assumptions of \Cref{thm:main}. Indeed, for domains with boundary of class $C^2$ we only require that $\|L\|_{L^\infty(\partial \o)} \le \sigma$, while for domains of class $C^1$ (or $C^{1, \alpha}$ with $\alpha \in (0, 1]$, but not $C^2$) condition \eqref{upperbound} is equivalent to $\|L\|_{L^{\infty}(\partial \o)} < \sigma$.
\end{remark}

%%%%%%%%%%%%%%%%%%%%%%%%%%%%%%%%%%%%%%%%%%%%
%%%%%%%%%%%%%%%%%%%%%%%%%%%%%%%%%%%%%%%%%%%%

\subsection{Carath\'eodory integrands and Scorza Dragoni's property}
The purpose of this subsection is to specify the class of surface energy densities $\tau$ that we consider in the core sections of this paper. We begin by recalling the notion of Carath\'eodory integrand.

\begin{definition}
Let $\o$ be a bounded open subset of $\R^N$ with Lipschitz continuous boundary and let $\tau \colon \partial \o \times \R^M \to [-\infty, \infty)$. We say that $\tau$ is Carath\'eodory if: 
\begin{itemize}
\item[$(i)$] for $\hno$-a.e.\@ $x \in \partial \o$, the function $\tau(x, \cdot)$ is continuous;
\item[$(ii)$] for every $p \in \R^M$, the function $\tau(\cdot, p)$ is measurable.
\end{itemize}
\end{definition}

Notice that since $\partial \o$ coincides locally with the graph of a Lipschitz function, this notion can be reconducted to that given, for example, in \cite{FL}. In addition to several of the tools that were introduced above, our proof of the limsup inequality will also make use of the following version of Scorza Dragoni's theorem. 

\begin{theorem}
\label{scorza-dragoni}
Let $\o$ be a bounded open subset of $\R^N$ with Lipschitz continuous boundary and let $\tau \colon \partial \o \times \R^M \to [-\infty, \infty)$ be a Carath\'eodory function. Then, for every $\e > 0$ there exists a compact subset of $\partial \o$, namely $C_{\e}$, such that $\hno(\partial \o \setminus C_{\e}) < \e$ and with the property that the restriction of $\tau$ to $C_{\e} \times \R^M$ is continuous. 
\end{theorem}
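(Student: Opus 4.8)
The plan is to prove the Scorza Dragoni property for $\tau$ by reducing it to the classical version on Euclidean space, exploiting the fact that $\partial \o$ is covered by finitely many Lipschitz graphs. First I would use the compactness of $\partial \o$ together with its Lipschitz regularity to write $\partial \o$ as a finite union of pieces $\Gamma_j$, each of which (after a rigid motion) is the graph of a Lipschitz function $f_j \colon A_j \subset \R^{N-1} \to \R$ over a compact set $A_j$. The map $\pi_j \colon A_j \to \Gamma_j$ given by $y' \mapsto (y', f_j(y'))$ is a bi-Lipschitz homeomorphism, and crucially it pushes forward $\hno$ restricted to $\Gamma_j$ to a measure on $A_j$ that is absolutely continuous with respect to $\mathcal{L}^{N-1}$ (with density $\sqrt{1 + |\nabla f_j|^2}$), and conversely. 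This ensures that $\hno$-null sets on $\partial \o$ correspond to $\mathcal{L}^{N-1}$-null sets on $A_j$, which is exactly what is needed to transfer the Carath\'eodory structure.

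The key step is then to pull $\tau$ back to the flat domains. For each $j$ I would define $\tau_j \colon A_j \times \R^M \to [-\infty, \infty)$ by $\tau_j(y', p) \coloneqq \tau(\pi_j(y'), p)$. Since $\pi_j$ is a homeomorphism and preserves null sets in both directions, $\tau_j$ inherits the Carath\'eodory property: $\tau_j(y', \cdot)$ is continuous for $\mathcal{L}^{N-1}$-a.e.\@ $y'$, and $\tau_j(\cdot, p)$ is $\mathcal{L}^{N-1}$-measurable for each $p$. Now I would invoke the classical Scorza Dragoni theorem on the Euclidean domain $A_j \times \R^M$ (in the form valid for integrands with values in $[-\infty, \infty)$, as recorded in references such as \cite{FL}): for each fixed $\e > 0$, there exists a compact set $K_j \subset A_j$ with $\mathcal{L}^{N-1}(A_j \setminus K_j) < \e'$, for a suitably chosen $\e'$, such that the restriction of $\tau_j$ to $K_j \times \R^M$ is continuous.

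To conclude I would transport the good sets back to the boundary. Setting $C_{\e} \coloneqq \bigcup_j \pi_j(K_j)$ (intersected appropriately so the pieces patch together consistently), this is a finite union of continuous images of compact sets, hence compact, and the restriction of $\tau$ to $C_\e \times \R^M$ is continuous because it coincides with $\tau_j \circ (\pi_j^{-1} \times \id)$ on each piece and the $\pi_j^{-1}$ are continuous. The measure estimate $\hno(\partial \o \setminus C_\e) < \e$ follows from the absolute continuity of the push-forward: choosing each $\e'$ small enough relative to the Lipschitz constants of the $f_j$ and the finite number of charts guarantees the total excluded $\hno$-measure is below $\e$.

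The main obstacle I anticipate is bookkeeping at the overlaps of the charts rather than any deep difficulty: one must ensure that the finitely many pieces $\Gamma_j$ cover $\partial \o$ while controlling the measure of the overlaps and of the parts discarded in each chart, so that continuity on $C_\e\times\R^M$ genuinely holds on the common set and the measures add up correctly. A clean way around this is to first decompose $\partial \o$ into finitely many \emph{disjoint} Borel pieces (e.g.\@ by subtracting previously covered portions), each still a Lipschitz graph up to an $\hno$-null set, so that the charts do not interfere; the compactness of $C_\e$ is then recovered by a routine closure argument since each $K_j$ is compact and $\pi_j$ is continuous.
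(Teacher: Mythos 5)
Your proposal is correct and follows exactly the route the paper has in mind: the paper states this theorem without proof, remarking only that, since $\partial \o$ coincides locally with Lipschitz graphs, the Carath\'eodory notion (and hence the Scorza Dragoni property) reduces to the classical Euclidean one in \cite{FL}, which is precisely your localization, pull-back, and push-forward argument. Your worry about overlaps is not really an issue: the sets $\pi_j(K_j)$ are compact, $\tau$ restricted to each $\pi_j(K_j)\times\R^M$ is continuous, and continuity on the finite union follows from the pasting lemma for closed covers, while the measure estimate only requires $\hno(\Gamma_j \setminus \pi_j(K_j)) < \e/k$ for each of the $k$ charts.
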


% for a proof we refer the reader to Proposition 1.3 in \cite{MR1142437}. in the sense of Rockafellar (see \cite{MR0512209}), as needed for \Cref{thm:main}. 

%%%%%%%%%%%%%%%%%%%%%%%%%%%%%%%%%%%%%%%%%%%%%%%%
%%%%%%%%%%%%%%%%%%%%%%%%%%%%%%%%%%%%%%%%%%%%%%%%
%%%%%%%%%%%%%%%%%%%%%%%%%%%%%%%%%%%%%%%%%%%%%%%%
%%%%%%%%%%%%%%%%%%%%%%%%%%%%%%%%%%%%%%%%%%%%%%%%

\section{Liminf inequality}
\label{sec-liminf}
This section is dedicated to the proof of the liminf inequality. We begin by addressing the case of a regular domain with boundary of class $C^2$. Our proof is reminiscent of the work of Modica \cite{MR921549}.

\begin{proposition}
\label{liminf-C2}
Let $\o$ be a bounded open subset of $\R^N$ with boundary of class $C^2$. Given a nonnegative function $c \in L^1(\partial \o)$, $\sigma > 0$, and a continuous function $L \colon \partial \o \to [0,\infty)$ such that $\|L\|_{L^\infty(\partial\o)} \le \sigma$, let $\tau \colon \partial \o \times \R^M \to [- \infty, \infty)$ be a Carath\'eodory function as in \eqref{tau-LB}. Furthermore, let $\f$ and $\h$ be defined as in \eqref{F} and \eqref{H}, respectively. Then, for every $u \in BV(\o; \R^M)$ and every sequence $\{u_n\}_{n \in \N}$ of functions in $W^{1,1}(\o; \R^M)$ such that $u_n \to u$ in $L^1(\o; \R^M)$, we have that 
\[
\liminf_{n \to \infty} \f(u_n) \ge \h(u). 
\]
\end{proposition}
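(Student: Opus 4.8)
The plan is to prove the stronger fact that $\h$ is lower semicontinuous along $\{u_n\}$, and then conclude via the pointwise bound $\f(u_n)\ge\h(u_n)$. Indeed, since $\hat\tau(x,\cdot)\le\tau(x,\cdot)$ for $\hno$-a.e.\ $x$ (see \eqref{eq:sigma}) and $|Du_n|(\o)=\int_\o|\nabla u_n|\,dx$ for $u_n\in W^{1,1}(\o;\R^M)$, one has $\f(u_n)\ge\h(u_n)$, so it suffices to show $\liminf_n\h(u_n)\ge\h(u)$. Discarding the trivial case $\liminf_n\f(u_n)=+\infty$ and passing to a subsequence, I would assume $\sup_n\f(u_n)<\infty$ and, up to a further subsequence, $u_n\to u$ a.e.\ in $\o$.

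The decisive step — and the one where the $C^2$ regularity is used — is to convert the boundary integral in $\h(u_n)$ into a bulk integral by means of the Gauss--Green formula of \Cref{FL-lem}, exactly as in the proof of \Cref{trace-C2} and following the strategy of Fonseca--Leoni \cite{MR1656999}. Let $\varphi\in C^1_0(\R^N;\R^N)$ with $|\varphi|<1$ in $\o$ be as in \Cref{FL-lem}. Assuming first that $\hat\tau$ is smooth (extended from $\partial\o$ into $\overline\o$ along normals, which is admissible and $C^1$ precisely because $\partial\o$ is of class $C^2$), the chain rule for $x\mapsto\hat\tau(x,u_n(x))\in W^{1,1}(\o)$ together with \Cref{FL-lem} gives
\[
\h(u_n)=\int_\o f(x,u_n,\nabla u_n)\,dx+\int_\o\psi(x,u_n)\,dx,
\]
where $f(x,s,\xi)\coloneqq\sigma|\xi|+\varphi(x)\cdot\big(\nabla_p\hat\tau(x,s)\,\xi\big)$ and $\psi(x,s)\coloneqq\varphi(x)\cdot\nabla_x\hat\tau(x,s)+\div\varphi(x)\,\hat\tau(x,s)$. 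The key structural observation is that, since $|\varphi|<1$ in $\o$ and $|\nabla_p\hat\tau|\le\sigma$ (because $\hat\tau(x,\cdot)$ is $\sigma$-Lipschitz), the integrand $f(x,s,\cdot)$ is nonnegative, convex and positively $1$-homogeneous; this sharp-constant balance is the analytic heart of the argument.

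I would then pass to the limit termwise. The lower-order term converges, $\int_\o\psi(x,u_n)\,dx\to\int_\o\psi(x,u)\,dx$, by Vitali's convergence theorem, using the linear growth of $\hat\tau$ and the equi-integrability of $\{u_n\}$. For the principal term I would apply the lower semicontinuity theorem for convex integrands of linear growth on $BV$ (in the spirit of \cite{MR1177778} and \cite{MR1218685}); since $f$ is already $1$-homogeneous in $\xi$, this yields
\[
\liminf_n\int_\o f(x,u_n,\nabla u_n)\,dx\ge\int_\o f(x,u,\nabla u)\,dx+\sigma|D^su|(\o)+\int_\o\varphi\cdot dD^s\big[\hat\tau(\cdot,u)\big],
\]
where the jump contribution is computed through the fundamental theorem of calculus. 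Collecting the three resulting pieces and running the chain rule and \Cref{FL-lem} \emph{backwards} on $\hat\tau(\cdot,u)\in BV(\o)$ recombines them into $\sigma|Du|(\o)+\int_\o\varphi\cdot dD[\hat\tau(\cdot,u)]+\int_\o\div\varphi\,\hat\tau(\cdot,u)\,dx=\sigma|Du|(\o)+\int_{\partial\o}\hat\tau(x,\tr u)\,d\hno=\h(u)$, as desired.

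I expect two main obstacles. First, $\{u_n\}$ need not be bounded in $BV(\o)$: this is exactly the situation the $C^2$ hypothesis is meant to accommodate (cf.\ the first step of \Cref{liminf-C1pw}), and it rules out any argument based on weak-$*$ compactness of the sequence, forcing the recession/relaxation framework above; the degeneracy of $f$ near $\partial\o$, where $|\varphi|\to1$, is the trace of this difficulty. Second, the density $\hat\tau$ is only Carath\'eodory, so the differentiations used above are not literally available; I would remove the smoothness assumption by approximating $\hat\tau$ from below by smooth $\sigma$-Lipschitz integrands — obtained via \Cref{scorza-dragoni} and mollification in $x$, which preserves both the $\sigma$-Lipschitz bound in $p$ and the lower bound \eqref{tau-LB} — and then passing to the supremum by monotone convergence.
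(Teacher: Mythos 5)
Your proposal takes a genuinely different route from the paper. The paper's proof never converts the boundary term of $\h$ into a bulk integral: it uses only the defining property of the Yosida transform, $\hat{\tau}(x,\tr u(x)) \le \tau(x,\tr u_n(x)) + \sigma|\tr u_n(x)-\tr u(x)|$, to reduce the whole statement to $\liminf_n \bigl(|Du_n|(\o) - |Du|(\o) - \int_{\partial\o}|\tr u_n - \tr u|\,d\hno\bigr) \ge 0$, and then applies the sharp (constant $1$) trace inequality of \Cref{trace-C2} on a thin $C^2$ collar $V_\gamma$ around $\partial\o$, together with lower semicontinuity of the total variation on $\o\setminus\overline{V_\gamma}$. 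This avoids the BV chain rule, any relaxation theorem for $u$-dependent integrands, and any regularization of $\tau$ in the liminf part (Scorza Dragoni is only used in the paper for the limsup). Your route is essentially that of Fonseca--Leoni \cite{MR1656999}; it is structurally sound (the algebra of the recombination, the $1$-homogeneity and nonnegativity of $f$, and the computation of the jump density via the path integral of $\nabla_p\hat{\tau}$ all check out), but it buys nothing here and imports substantially heavier machinery, whose hypotheses you must verify for an integrand that is a priori only Carath\'eodory.

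That is where the genuine gap lies: the regularization step. Mollification in $x$ does not produce \emph{minorants} of $\hat{\tau}$, and a Carath\'eodory function whose lower bound involves a merely $L^1$ function $c$ does not admit continuous (let alone $C^1$) minorants converging pointwise $\hno$-a.e.\ without further surgery --- one must truncate $c$ at a level $A_\eta$, restrict to a Scorza Dragoni compact set $C_\eta$, extend off $C_\eta$ by something like $\inf_{y\in C_\eta}\{\hat{\tau}(y,p)+k|x-y|\}\wedge(-A_\eta-\sigma|p|)$, and control the error $\int_{\partial\o\setminus C_\eta}$ separately; none of this is "mollification preserving the lower bound." Moreover, the lower semicontinuity theorems of \cite{MR1177778} and \cite{MR1218685} that you invoke require continuity of the integrand in $(x,s)$ and deliver the jump contribution through a cell formula that must be computed (your "fundamental theorem of calculus" remark is the right idea, but it is a step, not a footnote), and the passage $n\to\infty$ must be performed for each fixed smooth approximant \emph{before} the approximation parameter is sent to its limit, with Fatou from below (using the minorant $-c-\sigma|p|$ composed with $\tr u\in L^1$) justifying the final interchange. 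Each of these points is fixable, but as written the argument has a hole precisely at the place where the paper's proof is a two-line application of \Cref{trace-C2}.
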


\begin{proof}
Eventually extracting a subsequence (which we do not relabel), we can assume without loss of generality that
\[
\liminf_{n \to \infty} \f(u_n) = \lim_{n \to \infty} \f(u_n) < \infty.
\]
Then, by recalling the definition of $\hat{\tau}$ (see \eqref{eq:sigma}) we deduce that 
\[
\int_{\partial \o}\tau(x, \tr u_n(x)) - \hat{\tau}(x, \tr u(x))\,d\hno \ge - \sigma \int_{\partial \o}|\tr u_n(x) - \tr u(x)|\,d\hno,
\]
and we therefore obtain
\begin{align*}
\f(u_n) - \h(u) & = \sigma|Du_n|(\o) - \sigma|Du|(\o) + \int_{\partial \o}\tau(x, \tr u_n(x)) - \hat{\tau}(x, \tr u(x))\,d\hno \\
& \ge \sigma \left(|Du_n|(\o) - |Du|(\o) - \int_{\partial \o}|\tr u_n(x) - \tr u(x)|\,d\hno\right).
\end{align*}
In turn, to prove the desired inequality it is enough to show that 
\begin{equation}
\label{wtsC2}
\liminf_{n \to \infty} \mathcal{G}_n \ge 0, 
\end{equation}
where
\begin{equation}
\label{Gn}
\mathcal{G}_n \coloneqq |Du_n|(\o) - |Du|(\o) - \int_{\partial \o}|\tr u_n(x) - \tr u(x)|\,d\hno.
\end{equation}
To this end, for each $\gamma > 0$ we let $\o_{\gamma} \coloneqq \{x \in \o : \dist(x, \partial \o) > \gamma\}$ and we select an open set $U_{\gamma}$ with boundary of class $C^2$ such that $\o_{\gamma} \subset U_{\gamma} \subset \o_{\gamma/2}$. Finally, let $V_{\gamma} \coloneqq \o \setminus \overline{U_{\gamma}}$. Then $\partial \o \subset \partial V_{\gamma}$ and $\partial V_{\gamma}$ is of class $C^2$. Therefore, an application of \Cref{trace-C2} yields
\begin{align*}
\int_{\partial \o}|\tr u_n(x) - \tr u(x)|\,d\hno & \le \int_{\partial V_{\gamma}}|\tr u_n(x) - \tr u(x)|\,d\hno \\
& \le |D(u_n - u)|(V_{\gamma}) + C(V_{\gamma}) \int_{V_{\gamma}}|u_n(x) - u(x)|\,dx. 
\end{align*}
Consequently, we get
\begin{equation}
\label{wtsC2-1}
\mathcal{G}_n \ge |Du_n|(\o) - |Du|(\o) - |D(u_n - u)|(V_{\gamma}) - C(V_{\gamma}) \int_{V_{\gamma}}|u_n(x) - u(x)|\,dx,
\end{equation}
and we furthermore notice that
\begin{align}
\mathcal{I}_n  \coloneqq &\  |Du_n|(\o) - |Du|(\o) - |D(u_n - u)|(V_{\gamma}) \notag \\
 = &\ |Du_n|(\o \setminus \overline{V_{\gamma}}) - |Du|(\o \setminus \overline{V_{\gamma}}) + |Du_n|(V_{\gamma}) - |Du|(\overline{V_{\gamma}} \cap \o) - |D(u_n - u)|(V_{\gamma}) \notag \\
 \ge &\ |Du_n|(\o \setminus \overline{V_{\gamma}}) - |Du|(\o \setminus \overline{V_{\gamma}}) - 2 |Du|(\overline{V_{\gamma}} \cap \o). \label{wtsC2-2}
\end{align}
Thus, combining \eqref{wtsC2-1} and \eqref{wtsC2-2} with the fact that $u_n \to u$ in $L^1(\o; \R^M)$ yields
\begin{equation}
\label{wtsC2-3}
\liminf_{n \to \infty}\mathcal{G}_n \ge \liminf_{n \to \infty} \mathcal{I}_n \ge - 2 |Du|(\overline{V_{\gamma}} \cap \o),
\end{equation}
where in the last step we have used the lower semicontinuity of the total variation with respect to convergence in $L^1(\o \setminus \overline{V_{\gamma}}; \R^M)$. Finally, since $u \in BV(\o; \R^M)$, letting $\gamma \to 0$ in \eqref{wtsC2-3} we arrive at \eqref{wtsC2}. This concludes the proof.
\end{proof}

The reminder of this section is devoted to showing that, under a more stringent assumption on the function $L$ in the lower bound for $\tau$ (see \eqref{tau-LB}), the liminf inequality continues to hold also for domains $\o$ whose boundaries are either of class $C^1$ or almost of class $C^1$. 

\begin{proposition}
\label{liminf-C1pw}
Let $\o$ be a bounded open subset of $\R^N$ and assume that $\partial \o$ is almost of class $C^1$. Given a nonnegative function $c \in L^1(\partial \o)$, $\sigma > 0$, and a continuous function $L \colon \partial \o \to [0,\infty)$ such that $\|L\|_{L^\infty(\partial\o)} \le \sigma$, let $\tau \colon \partial \o \times \R^M \to [- \infty, \infty)$ be a Carath\'eodory function as in \eqref{tau-LB}. Let $\f$ and $\h$ be defined as in \eqref{F} and \eqref{H}, respectively. Furthermore, assume that there exist $\tilde{u} \in W^{1,1}(\o; \R^M)$ as in \eqref{F<infty} and $\e_0 > 0$ such that for all $x \in \partial \o$
\[
L(x)q_{\partial \o}(x) \le (1 - 2 \e_0) \sigma,
\]
where $q_{\partial \o}$ is given as in \Cref{q-def}. Then, for every $u \in L^1(\o; \R^M)$ and every sequence $\{u_n\}_{n \in \N}$ of functions in $W^{1,1}(\o; \R^M)$ such that $u_n \to u$ in $L^1(\o; \R^M)$, we have that 
\[
\liminf_{n \to \infty} \f(u_n) \ge \h(u). 
\]
\end{proposition}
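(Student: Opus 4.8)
The plan is to prove the liminf inequality in two stages: first establish a uniform bound on the gradients of the sequence $\{u_n\}_{n \in \N}$ (so that we may pass to $BV$), and then run an argument analogous to that of \Cref{liminf-C2}, but using the geometry-dependent trace inequality of \Cref{local-AG} in place of \Cref{trace-C2}. The crucial extra ingredient compared to the $C^2$ case is that here we have no a priori control on $|Du_n|(\o)$, only on $\f(u_n)$, and since $\tau$ may be unbounded below (via the term $-L(x)|p|$), large traces could in principle drive the surface energy to $-\infty$ while the bulk term grows. The role of condition \eqref{upperbound} is precisely to prevent this: it will allow the weighted trace inequality to absorb the negative surface contribution into a fraction $(1-\e)$ of the Dirichlet energy, leaving a coercive remainder.

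\medskip

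\noindent\textbf{Step 1 (compactness).} Assume as usual that $\liminf_n \f(u_n) = \lim_n \f(u_n) < \infty$, and write $M_0$ for a finite upper bound on $\f(u_n)$. Applying \Cref{local-AG} with the continuous weight $s \coloneqq L/\sigma$, which satisfies $s(x) q_{\partial\o}(x) \le 1 - 2\e_0$ by hypothesis, we obtain a constant $C = C(\o, \e_0, L/\sigma)$ such that
\[
\int_{\partial \o} L(x)|\tr u_n(x)|\,d\hno \le (1 - \e_0)\sigma |Du_n|(\o) + C \int_{\o}|u_n(x)|\,dx.
\]
Combining this with the lower bound \eqref{tau-LB} on $\tau$ gives, for each $n$,
\[
\f(u_n) \ge \sigma|Du_n|(\o) - \|c\|_{L^1(\partial\o)} - \int_{\partial\o} L(x)|\tr u_n(x)|\,d\hno \ge \e_0 \sigma |Du_n|(\o) - \|c\|_{L^1(\partial\o)} - C\int_{\o}|u_n(x)|\,dx.
\]
Since $u_n \to u$ in $L^1(\o; \R^M)$ the last integral is bounded uniformly in $n$, and rearranging yields $\sup_n |Du_n|(\o) < \infty$. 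Hence $u \in BV(\o; \R^M)$ and, up to a subsequence, $u_n \to u$ strictly enough to apply lower semicontinuity; in particular $\h(u) < \infty$ is meaningful and the right-hand side is finite. (This step corresponds to the compactness result referred to as \Cref{lem:cmpt} in the introduction and is exactly the point that fails in the $C^2$ argument, where one does not need it because the representation is only claimed on $BV$.)

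\medskip

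\noindent\textbf{Step 2 (liminf via localized trace inequality).} With compactness in hand, the remainder parallels \Cref{liminf-C2}. Using the definition of $\hat\tau$ in \eqref{eq:sigma} one estimates $\f(u_n) - \h(u) \ge \sigma \mathcal{G}_n$ where $\mathcal{G}_n$ is as in \eqref{Gn}, so it suffices to show $\liminf_n \mathcal{G}_n \ge 0$. For $\gamma > 0$ set $V_\gamma \coloneqq \o \setminus \overline{U_\gamma}$ with $U_\gamma$ a smooth inner approximation as before; the difference is that the boundary trace of $u_n - u$ on $\partial\o$ must now be controlled by a trace inequality valid on $V_\gamma$. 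Here I would invoke \Cref{local-trace} (or \Cref{local-AG} with $s \equiv 1$ in a neighborhood where $q_{\partial\o}$ is close to $1$) on the set $V_\gamma$, exploiting that $\partial\o$ is almost of class $C^1$ so that, away from the negligible singular set $\mathcal{S}$, the relevant encapsulation constant is essentially $1$; the contribution near $\mathcal{S}$ is controlled because $\hno(\mathcal{S}) = 0$ and can be made arbitrarily small by a covering argument as in Step 3 of the proof of \Cref{giusti-C1}. This produces
\[
\int_{\partial\o}|\tr u_n - \tr u|\,d\hno \le (1 + o_\gamma(1))|D(u_n - u)|(V_\gamma) + C_\gamma \int_{V_\gamma}|u_n - u|\,dx,
\]
and substituting into $\mathcal{G}_n$, splitting the total variation across $\o \setminus \overline{V_\gamma}$ and $V_\gamma$ exactly as in \eqref{wtsC2-2}, and using $L^1$-lower semicontinuity of the total variation on $\o \setminus \overline{V_\gamma}$, one obtains $\liminf_n \mathcal{G}_n \ge -2|Du|(\overline{V_\gamma} \cap \o) - (\text{error near } \mathcal{S})$. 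Letting $\gamma \to 0$ kills both terms, giving $\liminf_n \mathcal{G}_n \ge 0$.

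\medskip

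\noindent\textbf{Main obstacle.} The delicate point is Step 2's trace estimate near the singular set $\mathcal{S}$: \Cref{local-trace} applies cleanly only on $C^1$ graph pieces, so one cannot simply apply a single global trace inequality with constant $1 + o(1)$ on $\partial V_\gamma$ as was done with \Cref{trace-C2} in the $C^2$ case. The resolution is to cover $\mathcal{S}$ by balls of small total $(N-1)$-measure (possible since $\hno(\mathcal{S}) = 0$), handle the $C^1$ part with the sharp constant, and absorb the bad part using the already-established uniform $BV$ bound from Step 1 together with $\hno(\partial\o \cap \mathcal{B}_\gamma) \to 0$. Keeping the constant in front of $|D(u_n - u)|(V_\gamma)$ arbitrarily close to $1$ — rather than a fixed larger Lipschitz constant — is essential, since any factor strictly greater than $1$ would not be absorbed by the rearrangement and would destroy the inequality; this is the structural reason condition \eqref{upperbound} (equivalently $\|L\|_{L^\infty} < \sigma$ in the genuinely $C^1$ case, per \Cref{rem-sharp}) cannot be relaxed.
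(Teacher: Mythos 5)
Your Step 1 is exactly the paper's \Cref{lem:cmpt}, and your treatment of the genuinely $C^1$ portion of the boundary matches the paper's argument. The gap is in Step 2, in the handling of the singular set $\mathcal{S}$. After reducing to $\mathcal{G}_n$ as in \eqref{Gn}, you need the estimate
\[
\int_{\partial\o}|\tr u_n - \tr u|\,d\hno \le (1 + o_\gamma(1))\,|D(u_n - u)|(V_\gamma) + C_\gamma\int_{V_\gamma}|u_n - u|\,dx,
\]
i.e.\@ a trace inequality with unweighted constant arbitrarily close to $1$ on all of $\partial\o$. No such inequality holds near $\mathcal{S}$: by \Cref{rem:Q}, at a reentrant corner one has $q_{\partial\o}(x) = \sqrt{1+\ell^2} > 1$, and $q_{\partial\o}$ is a local quantity, so covering $\mathcal{S}$ by balls of small total $\hno$-measure does not improve the constant there. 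Your proposed fix --- absorbing the contribution over $\mathcal{B}_\gamma$ using the uniform $BV$ bound and $\hno(\partial\o\cap\mathcal{B}_\gamma)\to 0$ --- does not work, because the uniform bound only gives $\sup_n\int_{\partial\o}|\tr u_n|\,d\hno < \infty$, not equi-integrability of the traces; the sequence can concentrate its trace mass on $\partial\o\cap\mathcal{B}_\gamma$. Modica's example \eqref{E1} in the square (which \emph{is} almost of class $C^1$) makes this concrete: with $\sigma = 1$ and $\tau(x,p) = \lambda p$ for $\lambda \in (-1, -\sqrt{2}/2)$ one has $\|L\|_{L^\infty} = |\lambda| < \sigma$, yet $\liminf_n\f(u_n) = \sqrt{2} + 2\lambda < 0 = \h(0)$, so the conclusion you are trying to prove via $\liminf_n\mathcal{G}_n \ge 0$ is simply false under the hypotheses your Step 2 actually uses. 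Note that in your argument the geometric condition \eqref{upperbound} is invoked only for compactness, never in the liminf estimate itself --- a sign that something must be missing, since \eqref{upperbound} fails precisely in Modica's counterexample ($|\lambda|\sqrt{2} > 1$).

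The paper's proof avoids this by \emph{not} reducing to $\mathcal{G}_n$ near $\mathcal{S}$. It introduces a partition of unity separating a neighborhood of $\mathcal{S}$ from the $C^1$ graph pieces; on the $C^1$ pieces it uses the $\sigma$-Lipschitz bound $\tau - \hat\tau \ge -\sigma|\tr u_n - \tr u|$ together with \Cref{local-trace} (constant $1+\e$), but near $\mathcal{S}$ it keeps only the raw lower bound \eqref{tau-LB}, $\tau(x,\tr u_n) \ge -c(x) - L(x)|\tr u_n - \tr u| - L(x)|\tr u|$, and controls the middle term via the \emph{weighted} trace inequality of \Cref{local-AG} with weight $s = L/\sigma$. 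It is exactly hypothesis \eqref{upperbound} that makes $s\,q_{\partial\o}\le 1-2\e_0$, so the resulting coefficient of $\int\psi_i^{\mathcal{S}}|\nabla u_n|\,dx$ is $(1-\e_0)\sigma < \sigma$ and is absorbed by the bulk term; the leftover boundary terms $\int_{\partial\o\cap\mathcal{B}_\gamma}\bigl(c + \sigma|\tr u| + \hat\tau(\cdot,\tr u)\bigr)\,d\hno$ involve only the fixed limit $u$ and vanish as $\gamma\to 0$ by dominated convergence and \Cref{Hfinite}. You should restructure your Step 2 along these lines: the reduction to $\mathcal{G}_n$ is only legitimate where the sharp constant-$1$ trace inequality is available.
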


We begin by proving two preliminary lemmas. 
\begin{lemma}
\label{lem:cmpt}
Under the assumptions of \Cref{liminf-C1pw}, let $\{u_n\}_{n \in \N} \subset W^{1,1}(\o;\R^M)$ be such that $u_n \to u$ in $L^1(\o;\R^M)$ and
\[
\liminf_{n \to \infty} \f(u_n) < \infty.
\]
Then $u \in BV(\o;\R^M)$ and $\{u_n\}_{n \in \N}$ admits a subsequence, namely $\{u_{n_k}\}_{k \in \N}$, which is bounded in $W^{1,1}(\o; \R^M)$. In particular, $u_{n_k}$ converges to $u$ weakly-$*$ in $BV(\o; \R^M)$.
\end{lemma}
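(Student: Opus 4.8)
The plan is to extract from the energy bound a uniform control on $\int_\o |\nabla u_n|\,dx$, which (together with the $L^1$ convergence, hence $L^1$ boundedness) gives a subsequence bounded in $W^{1,1}(\o;\R^M)$; the weak-$*$ $BV$ convergence and the fact that $u \in BV$ then follow from standard compactness and lower semicontinuity. First I would pass to a subsequence (not relabeled) along which $\liminf_n \f(u_n) = \lim_n \f(u_n) =: \Lambda < \infty$, so that $\f(u_n) \le \Lambda + 1$ for all large $n$. Using the lower bound \eqref{tau-LB} on $\tau$ I would estimate
\[
\f(u_n) = \sigma \int_\o |\nabla u_n|\,dx + \int_{\partial\o}\tau(x,\tr u_n)\,d\hno \ge \sigma \int_\o |\nabla u_n|\,dx - \|c\|_{L^1(\partial\o)} - \int_{\partial\o} L(x)|\tr u_n(x)|\,d\hno.
\]
The main obstacle, and the reason the hypothesis \eqref{upperbound} is needed here (as opposed to the $C^2$ case), is that the boundary term $\int_{\partial\o} L|\tr u_n|\,d\hno$ is not controlled by the $L^1$ norm of $u_n$ alone; it can absorb a full unit of the gradient energy. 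This is precisely where the weighted trace inequality comes in.

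The key step is to apply \Cref{local-AG} with the weight $s(x) \coloneqq L(x)/\sigma$. Since $L$ is continuous and, by the hypothesis \eqref{upperbound}, $s(x)\,q_{\partial\o}(x) = L(x)q_{\partial\o}(x)/\sigma \le 1 - 2\e_0$ for all $x\in\partial\o$, the function $s$ satisfies \eqref{sq} with $\e = \e_0$. Thus there is a constant $C = C(\o,\e_0,L) > 0$ with
\[
\int_{\partial\o} L(x)|\tr u_n(x)|\,d\hno = \sigma \int_{\partial\o} s(x)|\tr u_n(x)|\,d\hno \le \sigma (1-\e_0)|Du_n|(\o) + \sigma C \int_\o |u_n(x)|\,dx
\]
for every $n$, where $|Du_n|(\o) = \int_\o |\nabla u_n|\,dx$ since $u_n \in W^{1,1}(\o;\R^M)$. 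Substituting this into the previous estimate yields
\[
\Lambda + 1 \ge \f(u_n) \ge \sigma \e_0 \int_\o |\nabla u_n|\,dx - \sigma C \int_\o |u_n|\,dx - \|c\|_{L^1(\partial\o)}
\]
for all large $n$. Since $u_n \to u$ in $L^1(\o;\R^M)$, the sequence $\{\int_\o |u_n|\,dx\}_n$ is bounded, so rearranging gives a uniform bound on $\int_\o |\nabla u_n|\,dx$; the crucial point is that the factor $\sigma\e_0 > 0$ in front of the gradient term is strictly positive thanks to \eqref{upperbound}.

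Finally, combining this gradient bound with the boundedness of $\|u_n\|_{L^1}$ shows that the (relabeled) subsequence is bounded in $W^{1,1}(\o;\R^M)$, hence in $BV(\o;\R^M)$. By the compactness theorem in $BV$, a further subsequence $\{u_{n_k}\}_k$ converges weakly-$*$ in $BV(\o;\R^M)$ to some limit, which must coincide with $u$ by uniqueness of the $L^1$ limit; in particular $u \in BV(\o;\R^M)$. This completes the proof.
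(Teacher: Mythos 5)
Your proposal is correct and follows essentially the same route as the paper: pass to a subsequence realizing the liminf, apply the weighted trace inequality of \Cref{local-AG} with $s = L/\sigma$ (using \eqref{upperbound} to verify \eqref{sq}), absorb the boundary term to leave a coefficient $\sigma\e_0 > 0$ on the gradient, and conclude via $BV$ compactness. No gaps.
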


\begin{proof}
Eventually extracting a subsequence (which we do not relabel), we can assume that 
\[
K \coloneqq \liminf_{n \to \infty} \f(u_n) = \lim_{n \to \infty} \f(u_n).
\]
Notice that by \Cref{local-AG} we have
\begin{equation}
\label{bdd-grad}
\int_{\partial \o} L(x)|\tr u_n(x)|\,d\hno \le (1 - \e_0)\sigma \int_{\o}|\nabla u_n(x)|\,dx + \sigma C(\o, \e_0, L/\sigma) \int_{\o}|u_n(x)|\,dx.
\end{equation}
Consequently, using \eqref{tau-LB} and \eqref{bdd-grad} we see that for every $n$ sufficiently large
\begin{align*}
K + 1 \ge \f(u_n) & = \sigma \int_{\o}|\nabla u_n(x)|\,dx + \int_{\partial \o} \tau(x, \tr u_n(x))\, d\hno   \\
& \ge \sigma \int_{\o}|\nabla u_n(x)|\,dx - \|c\|_{L^1(\partial \o)} - \int_{\partial \o} L(x)|\tr u_n(x)| \,d\hno  \\
& \ge \sigma \e_0 \int_{\o}|\nabla u_n(x)|\,dx - \|c\|_{L^1(\partial \o)} - \sigma C(\o,\e_0, L/\sigma) \int_{\o} |u_n(x)| \,dx.
\end{align*}
Since $u_n \to u$ in $L^1(\o; \R^M)$, we readily deduce that for some $\tilde{n} \in \N$
\[
\sup \left\{ \int_\o |\nabla u_n(x)|\,dx : n \ge \tilde{n} \right\} < \infty.
\]
Hence $\{u_n\}_{n \ge \tilde{n}}$ is bounded in $W^{1,1}(\o; \R^M)$. The rest of the proof follows by standard arguments (see, for example, Proposition 3.13 in \cite{AFP}).
\end{proof}

\begin{lemma}
\label{Hfinite}
Let $\o$ be a bounded open subset of $\R^N$ with Lipschitz continuous boundary. Given a nonnegative function $c \in L^1(\partial \o)$, $\sigma > 0$, and a continuous function $L \colon \partial \o \to [0,\infty)$ such that $\|L\|_{L^\infty(\partial\o)} \le \sigma$, let $\tau \colon \partial \o \times \R^M \to [- \infty, \infty)$ be a Carath\'eodory function as in \eqref{tau-LB}. Let $\f$ and $\h$ be defined as in \eqref{F} and \eqref{H}, respectively. Furthermore, assume that there exists $\tilde{u} \in W^{1,1}(\o; \R^M)$ such that $\f(\tilde{u}) < \infty$. Then we have:
\begin{itemize}
\item[$(i)$] $\tau(\cdot, \tr \tilde{u}(\cdot)) \in L^1(\partial \o)$;
\item[$(ii)$] $\hat{\tau}(\cdot, v(\cdot)) \in L^1(\partial \o)$ for all $v \in L^1(\partial \o; \R^M)$ and in particular $\h(u)$ is finite for all $u \in BV(\o; \R^M)$.
\end{itemize}
\end{lemma}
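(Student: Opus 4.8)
The plan is to treat the two assertions in turn, using the lower bound \eqref{tau-LB} throughout and reserving the hypothesis \eqref{F<infty} for the only delicate point, namely an upper bound in part $(ii)$.

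For $(i)$, I would first note that since $\tau$ is Carath\'eodory and $\tr\tilde u \in L^1(\partial\o;\R^M)$, the superposition $x \mapsto \tau(x,\tr\tilde u(x))$ is $\hno$-measurable. The lower bound \eqref{tau-LB} gives
\[
\tau(x,\tr\tilde u(x)) \ge -c(x) - L(x)|\tr\tilde u(x)|,
\]
and the right-hand side lies in $L^1(\partial\o)$ because $c \in L^1(\partial\o)$, $L$ is continuous (hence bounded on the compact set $\partial\o$), and $\tr\tilde u \in L^1(\partial\o;\R^M)$. Thus the negative part of $\tau(\cdot,\tr\tilde u(\cdot))$ is integrable, so $\int_{\partial\o}\tau(x,\tr\tilde u(x))\,d\hno$ is well defined in $(-\infty,\infty]$. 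Since $\tilde u \in W^{1,1}(\o;\R^M)$ makes the bulk contribution $\sigma\int_\o|\nabla\tilde u|$ finite, the assumption $\f(\tilde u)<\infty$ forces $\int_{\partial\o}\tau(x,\tr\tilde u(x))\,d\hno<\infty$, and therefore $\tau(\cdot,\tr\tilde u(\cdot)) \in L^1(\partial\o)$.

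For the lower bound in $(ii)$ I would first verify, as claimed in the paragraph following \eqref{eq:sigma}, that $\hat\tau$ inherits \eqref{tau-LB}: writing $|q| \le |p| + |p-q|$ and using $L(x)\le\sigma$ gives $-L(x)|q| + \sigma|p-q| \ge -L(x)|p|$, whence $\hat\tau(x,p) \ge -c(x) - L(x)|p|$ and so $\hat\tau(\cdot,v(\cdot)) \ge -c - L|v| \in L^1(\partial\o)$. The essential point is the matching upper bound, and this is where \eqref{F<infty} enters through the variational definition \eqref{eq:sigma}: choosing the competitor $q = \tr\tilde u(x)$ in the infimum yields
\[
\hat\tau(x,v(x)) \le \tau(x,\tr\tilde u(x)) + \sigma|v(x) - \tr\tilde u(x)|.
\]
By part $(i)$ the first term is in $L^1(\partial\o)$, while $|v - \tr\tilde u| \le |v| + |\tr\tilde u| \in L^1(\partial\o)$, so $\hat\tau(\cdot,v(\cdot))$ is dominated above by an integrable function. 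Since $\hat\tau$ is Carath\'eodory, $x\mapsto\hat\tau(x,v(x))$ is measurable, and being squeezed between two $L^1$ functions it belongs to $L^1(\partial\o)$.

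Finally, for any $u \in BV(\o;\R^M)$ the trace $\tr u$ lies in $L^1(\partial\o;\R^M)$ by \Cref{GGT}, so applying the previous step with $v = \tr u$ shows that the surface integral in \eqref{H} is finite; combined with $\sigma|Du|(\o)<\infty$ this gives finiteness of $\h(u)$. I expect the only genuinely substantive step to be the upper bound in $(ii)$: the lower bounds and the measurability are routine, but the finiteness \emph{from above} crucially exploits the existence of a single admissible competitor $\tilde u$, which through the Yosida transform \eqref{eq:sigma} produces a pointwise upper bound on $\hat\tau(\cdot,v(\cdot))$ for \emph{every} $v \in L^1(\partial\o;\R^M)$ simultaneously.
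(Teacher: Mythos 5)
Your proof is correct and follows essentially the same route as the paper: part $(i)$ via the lower bound \eqref{tau-LB} plus $\f(\tilde u)<\infty$ (the paper phrases this with the decomposition $\tau=\tau^+-\tau^-$), and part $(ii)$ by sandwiching $\hat\tau(\cdot,v(\cdot))$ between $-c-L|v|$ and $\tau(\cdot,\tr\tilde u(\cdot))+\sigma|v-\tr\tilde u|$. Your direct use of the competitor $q=\tr\tilde u(x)$ in the infimum \eqref{eq:sigma} is just an unwrapped version of the paper's two-step argument (first $\hat\tau(\cdot,\tr\tilde u(\cdot))\in L^1$, then the $\sigma$-Lipschitz estimate \eqref{hatL1}), so the content is identical.
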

\begin{proof}
Denote with $\tau^+$ and $\tau^-$ the positive and the negative part of $\tau$, respectively, so that 
\[
\tau(x, p) = \tau^+(x, p) - \tau^-(x, p) \qquad  \text{ and } \qquad |\tau(x, p)| = \tau^+(x, p) + \tau^-(x, p).
\]
Then it follows from \eqref{tau-LB} that $\tau^-(\cdot, v(\cdot)) \in L^1(\partial \o)$ for all $v \in L^1(\partial \o; \R^M)$, and moreover, recalling that by assumption $\f(\tilde{u}) < \infty$, we have
\begin{align*}
\infty & > \int_{\partial \o} \tau(x, \tr \tilde{u}(x))\,d\hno \\
& = \int_{\partial \o} \tau^+(x, \tr \tilde{u}(x)) - \tau^-(x, \tr \tilde{u}(x))\,d\hno \\
& \ge \int_{\partial \o} \tau^+(x, \tr \tilde{u}(x))\,d\hno - \|c\|_{L^1(\partial \o)} - \sigma \int_{\partial \o}|\tr \tilde{u}(x)|\,d\hno.
\end{align*}
Since this shows that $\tau^+(\cdot, \tr \tilde{u}(\cdot)) \in L^1(\partial \o)$, statement $(i)$ readily follows. 

On the other hand, since
\[
|\hat{\tau}(x, p)| \le \max\{c(x) + \sigma|p|, |\tau(x, p)|\}
\]
for $\hno$-a.e.\@ $x \in \partial \o$ and for all $p \in \R^M$, we obtain that also $\hat{\tau}(\cdot, \tr \tilde{u}(\cdot)) \in L^1(\partial \o)$. Therefore, for all $v \in L^1(\partial \o; \R^M)$ we have that
\begin{align}
\int_{\partial \o} |\hat{\tau}(x, v(x))|\,d\hno & \le \int_{\partial \o}|\hat{\tau}(x, v(x)) - \hat{\tau}(x, \tr \tilde{u}(x))| + |\hat{\tau}(x, \tr \tilde{u}(x))|\,d\hno \notag \\
& \le \int_{\partial \o}\sigma|v(x) - \tr \tilde{u}(x)| + |\hat{\tau}(x, \tr \tilde{u}(x))|\,d\hno < \infty. \label{hatL1}
\end{align}
This concludes the proof.
\end{proof}

\begin{proof}[Proof of \Cref{liminf-C1pw}]
Notice that by eventually extracting a subsequence (which we do not relabel), we can assume without loss of generality that
\[
\liminf_{n \to \infty} \f(u_n) = \lim_{n \to \infty} \f(u_n) < \infty.
\]
Then, in view of \Cref{lem:cmpt}, we have that $u \in BV(\o; \R^M)$ and furthermore we can find a positive number $\Lambda$ such that
\begin{equation}
\label{Lambda-bound}
\int_{\o}|\nabla u_n(x)|\,dx \le \Lambda
\end{equation}
for every $n$ large enough. We divide the rest of the proof into two steps.
\newline
\textbf{Step 1:} Assume first that $\partial \o$ is of class $C^1$, so that $Q_{\partial \o} = 1$ (see \Cref{rem:Q} and \eqref{Q-def}). Reasoning as in the proof of \Cref{liminf-C2}, our aim is to show that $\liminf_{n \to \infty}\mathcal{G}_n \ge 0$, where $\mathcal{G}_n$ is defined as in \eqref{Gn}. To this end, as in the previous case, we proceed to find an open subset of $\o \setminus \o_{\gamma}$ with boundary of class $C^1$, namely $V_{\gamma}$, such that $\partial \o \subset \partial V_{\gamma}$. Then, since we are no longer in a position to apply \Cref{trace-C2}, we deduce from \Cref{thm:trace} (see also \Cref{rem:Q} and \eqref{Q-def}) with $Q_{\partial V_{\gamma}} = 1$ that for every $\e > 0$ it holds
\[
\mathcal{G}_n \ge |Du_n|(\o) - |Du|(\o) - (1 + \e)|D(u_n - u)|(V_{\gamma}) - C(V_{\gamma}, \e) \int_{V_{\gamma}}|u_n(x) - u(x)|\,dx.
\]
In turn, for every $n$ sufficiently large we obtain
\begin{align*}
\tilde{\mathcal{I}}_n \coloneqq &\  |Du_n|(\o) - |Du|(\o) - (1 + \e)|D(u_n - u)|(V_{\gamma}) \\
\ge &\ |Du_n|(\o \setminus \overline{V_{\gamma}}) - |Du|(\o \setminus \overline{V_{\gamma}}) - \e \Lambda - (2 + \e) |Du|(\overline{V_{\gamma}} \cap \o), 
\end{align*}
where $\Lambda$ is given as in \eqref{Lambda-bound}. Therefore, combining the previous inequalities we get
\[
\liminf_{n \to \infty} \mathcal{G}_n \ge \liminf_{n \to \infty} \tilde{\mathcal{I}}_n \ge - \e \Lambda - (2 + \e) |Du|(\overline{V_{\gamma}} \cap \o).
\]
Finally, letting $\e, \gamma \to 0$ concludes the proof in this case.
\newline
\textbf{Step 2:} In this step we address the general case of a domain with boundary almost of class $C^1$, in the sense of \Cref{a-C1}. Fix $\e, \gamma > 0$. Reasoning as in the proof of \Cref{giusti-C1}, we can find two finite collections of points, namely $\{y^1, \dots, y^{\ell}\} \subset \mathcal{S}$ and $\{z^1, \dots, z^k\} \subset \partial \o \setminus \mathcal{S}$, with the following properties:
\begin{enumerate}[label=(P.\arabic*), ref=P.\arabic*]
\item \label{P1} there are positive constants $\gamma_1, \dots, \gamma_{\ell}$, with $\gamma_i < \gamma$, such that
\begin{equation}
\label{Bg-0}
\sum_{i = 1}^{\ell} \gamma_i^{N - 1} < \gamma
\end{equation}
and
\begin{equation}
\label{Bg-def}
\mathcal{S} \subset \mathcal{B}_{\gamma} \coloneqq \bigcup_{i = 1}^{\ell} B(y^i, \gamma_i);
\end{equation}
\item \label{P2} for each $j \in \{1, \dots, k\}$ there exists an open neighborhood of $z^j$, namely $V_j$, such that $V_j \subset \{x \in \R^N : \dist(x, \partial \o) < \gamma\}$ and (up to a rotation) $\partial \o \cap V_j$ coincides with the graph of a function of class $C^1$ with gradient uniformly bounded from above by $\e$. 
\item \label{P3} the collection of sets in \eqref{P1} and \eqref{P2} forms a covering of $\partial \o$, that is
\[
\partial \o \subset \mathcal{B}_{\gamma} \cup \bigcup_{j = 1}^k V_j.
\]
\end{enumerate}
In view of \eqref{P3}, there exists an open set $U_{\gamma} \subset \o$ such that 
\[
\overline{\o} \subset U_{\gamma} \cup \mathcal{B}_{\gamma} \cup \bigcup_{j = 1}^k V_j.
\]
Let $\{\psi^{\gamma}, \{\psi_i^{\mathcal{S}}\}_{i \le \ell}, \{\psi_j\}_{j \le k}\}$ be a partition of unity on $\overline{\o}$ subordinated to the open covering $\{U_{\gamma}, \{B(y^i, \gamma_i)\}_{i \le \ell}, \{V_j\}_{j \le k}\}$ and denote by $\mathcal{E}$ the excess contact energy, that is 
\begin{equation}
\label{Bun}
\mathcal{E}(u_n, u) \coloneqq \int_{\partial \o} \tau(x, \tr u_n(x)) - \hat{\tau}(x, \tr u(x)) \,d\hno.
\end{equation}
With these notations at hand, we can write
\begin{multline}
\label{tau-hat}
\mathcal{E}(u_n, u) \ge - \sigma \sum_{j = 1}^k \int_{\partial \o} \psi_j(x)|\tr u_n(x) - \tr u(x)|\,d\hno \\ 
+ \sum_{i = 1}^{\ell}\int_{\partial \o} \psi_i^{\mathcal{S}}(x) \left[\tau(x, \tr u_n(x)) - \hat{\tau}(x, \tr u(x)) \right]\,d\hno , 
\end{multline}
and proceed to estimate each term on the right-hand side separately. To begin, we claim that for each $j \in \{1, \dots, k\}$
\begin{multline}
\int_{\partial \o} \psi_j(x)|\tr u_n(x) - \tr u(x)| \,d\hno \le (1 + \e) \biggr(\int_{\o} \psi_j(x) |\nabla u_n(x)|\,dx + \int_{\o} \psi_j(x)\,d|Du|(x) \\
+ \|\nabla \psi_j\|_{L^{\infty}(V_j; \R^N)}\int_{\o}|u_n(x) - u(x)|\,dx\biggr). \label{bdry-bound-j}
\end{multline}
Notice indeed that in view of \eqref{P2}, an application of \Cref{local-trace} yields
\[
\int_{\partial \o} \psi_j(x)|\tr u_n(x) - \tr u(x)|\,d\hno \le (1 + \e) |D(\psi_j(u_n - u))|(\o).
\]
Moreover, since by Leibniz's formula 
\[
D(\psi_j(u_n - u)) = \psi_jD(u_n - u) + (u_n - u) \otimes \nabla \psi_i \mathcal{L}^N,
\] 
we see that
\begin{multline}
\label{tot-var-est}
|D(\psi_j(u_n - u))|(\o) \le \int_{\o} \psi_j(x) |\nabla u_n(x)|\,dx + \int_{\o} \psi_j(x)\,d|Du|(x) \\ + \|\nabla \psi_j\|_{L^{\infty}(V_j; \R^N)}\int_{\o}|u_n(x) - u(x)|\,dx,  
\end{multline}
and \eqref{bdry-bound-j} readily follows. Similarly, we claim that for every $i \in \{1, \dots, \ell \}$ we have
\begin{align}
\label{bdry-bound-S}
\int_{\partial \o} L(x)\psi_i^{\mathcal{S}}(x)|\tr u_n(x) - \tr u(x)|\,d\hno & \le (1 - \e_0) \sigma \int_{\o} \psi_i^{\mathcal{S}}(x) |\nabla u_n(x)|\,dx \notag \\
& \qquad + (1 - \e_0) \sigma \int_{\o} \psi_i^{\mathcal{S}}(x)\,d|Du|(x) \notag \\
& \qquad + \tilde{C}(\o, \e_0, L/\sigma, \psi_i^{\mathcal{S}}) \int_{\o} |u_n(x) - u(x)|\,dx,
\end{align}
where 
\[
\tilde{C}(\o, \e_0, L/\sigma, \psi_i^{\mathcal{S}}) \coloneqq \sigma C(\o, \e_0, L/\sigma) + \|\nabla \psi_i^{\mathcal{S}}\|_{L^{\infty}(B(y^i, \gamma_i); \R^N)}.
\]
Indeed, an application of \Cref{local-AG} yields
\begin{multline}
\label{bdry-bound-0}
\int_{\partial \o} L(x)\psi_i^{\mathcal{S}}(x)|\tr u_n(x) - \tr u(x)|\,d\hno \le (1 - \e_0) \sigma |D(\psi_i^{\mathcal{S}}(u_n - u))|(\o) \\
+ \sigma C(\o, \e_0, L/\sigma)\int_{\o}|u_n(x) - u(x)|\,dx,
\end{multline}
and \eqref{bdry-bound-S} follows from \eqref{bdry-bound-0} with similar computations to those in \eqref{tot-var-est}. Using the fact that for every such $i$
\begin{align*}
\int_{\partial \o} \psi_i^{\mathcal{S}}(x)\tau(x, &\,\tr u_n(x))\,d\hno  \\
& \ge \int_{\partial \o} \psi_i^{\mathcal{S}}(x)( - c(x) - L(x)|\tr u_n(x)|)\,d\hno \\
& \ge \int_{\partial \o} \psi_i^{\mathcal{S}}(x)( - c(x) - L(x)|\tr u_n(x) - \tr u(x)| - L(x)|\tr u(x)|)\,d\hno,
\end{align*}
together with \eqref{tau-hat}, \eqref{bdry-bound-j}, and \eqref{bdry-bound-S}, we deduce that
\begin{align}
\label{tau-diff}
\mathcal{E}(u_n, u) & \ge - (1 + \e)\sigma \left( \int_{\o} (1 - \psi^{\gamma}(x))|\nabla u_n(x)|\,dx + \int_{\o}(1 - \psi^{\gamma}(x))\,d|Du|(x)\right) + \mathcal{Q}(n, \gamma),
\end{align}
where 
\begin{multline}
\mathcal{Q}(n, \gamma) \coloneqq - \biggr((1 + \e) \sigma \sum_{j = 1}^k\|\nabla \psi_j\|_{L^{\infty}(V_j; \R^N)} + \sum_{i = 1}^{\ell}\tilde{C}(\o, \e_0, L/\sigma, \psi_i^{\mathcal{S}})\biggr) \int_{\o}|u_n(x) - u(x)|\,dx \\
- \sum_{i = 1}^{\ell}\int_{\partial \o} \psi_i^{\mathcal{S}}(x)(c(x) + L(x)|\tr u(x)| + \hat{\tau}(x, \tr u(x)))\,d\hno. \label{Qn-def}
\end{multline}
Consequently, it follows from \eqref{tau-diff} that 
\begin{align}
\label{wts-diff}
\f(u_n) - \h(u) & = \sigma \int_{\o} |\nabla u_n(x)|\,dx - \sigma|Du|(\o) + \mathcal{E}(u_n, u) \notag \\
& \ge (1 + \e) \sigma \int_{\o} \psi^{\gamma}(x) |\nabla u_n(x)|\,dx - \sigma |Du|(\o) - \sigma \e \int_{\o} |\nabla u_n(x)|\,dx \notag \\
& \qquad - (1 + \e) \sigma \int_{\o}(1 - \psi^{\gamma}(x))\,d|Du|(x) + \mathcal{Q}(n , \gamma).
\end{align}
Using the fact that, in view of \eqref{P1} and \eqref{P2}, $\psi^{\gamma} = 1$ in $\o_{\gamma} \coloneqq \{x \in \o : \dist(x, \partial \o) > \gamma \}$, from \eqref{wts-diff} we obtain that
\begin{multline}
\label{wts-diff-22}
\f(u_n) - \h(u) \ge \sigma \left(\int_{\o_{\gamma}}|\nabla u_n(x)|\,dx - |Du|(\o_{\gamma})\right) - \sigma \e \int_{\o} |\nabla u_n(x)|\,dx \\
 - (2 + \e)\sigma |Du|(\o \setminus \o_{\gamma}) + \mathcal{Q}(n, \gamma).
\end{multline}
Consequently, by \eqref{Lambda-bound} and \eqref{wts-diff-22}, together with the lower semicontinuity of the total variation with respect to convergence in $L^1(\o_{\gamma}; \R^M)$, we obtain
\begin{equation}
\label{inf-wts}
\liminf_{n \to \infty} \f(u_n) - \h(u) \ge - \sigma \e \Lambda - (2 + \e)\sigma |Du|(\o \setminus \o_{\gamma}) + \liminf_{n \to \infty} \mathcal{Q}(n, \gamma).
\end{equation}
Since $u_n \to u$ in $L^1(\o; \R^M)$, from \eqref{Bg-def} and \eqref{Qn-def} we see that 
\begin{align}
\label{inf-wts-2}
\liminf_{n \to \infty} \mathcal{Q}(n, \gamma) & = - \sum_{i = 1}^{\ell}\int_{\partial \o} \psi_i^{\mathcal{S}}(x)(c(x) + L(x)|\tr u(x)| + \hat{\tau}(x, \tr u(x)))\,d\hno \notag \\
& \ge - \int_{\partial \o \cap \mathcal{B}_{\gamma}} c(x) + \sigma|\tr u(x)| + \hat{\tau}(x, \tr u(x))\,d\hno.
\end{align}
Notice that by \Cref{Hfinite} and the fact that $\hno(\partial \o \cap \mathcal{B}_{\gamma}) \to 0$ as $\gamma \to 0$ (see \eqref{Bg-0}), an application of Lebesgue's dominated convergence theorem readily yields
\[
\lim_{\gamma \to 0}\int_{\partial \o \cap \mathcal{B}_{\gamma}} c(x) + \sigma|\tr u(x)| + \hat{\tau}(x, \tr u(x))\,d\hno = 0.
\]
Thus, since $u \in BV(\o; \R^M)$, the desired result follows from \eqref{inf-wts} and \eqref{inf-wts-2} by letting $\e, \gamma \to 0$. This concludes the proof. 
\end{proof}

%%%%%%%%%%%%%%%%%%%%%%%%%%%%%%%%%%%%%%%%%%%%%%%%
%%%%%%%%%%%%%%%%%%%%%%%%%%%%%%%%%%%%%%%%%%%%%%%%
%%%%%%%%%%%%%%%%%%%%%%%%%%%%%%%%%%%%%%%%%%%%%%%%
%%%%%%%%%%%%%%%%%%%%%%%%%%%%%%%%%%%%%%%%%%%%%%%%

\section{Limsup Inequality}
\label{sec-limsup}
To conclude the proof of \Cref{thm:main}, we are left to show the existence of a recovery sequence. Unlike for the liminf inequality, in this case there is no difference in the proof for sets with $C^2$ or almost $C^1$ boundary. This is achieved by the following proposition.

\begin{proposition}
\label{limsup-prop}
Let $\o$ be a bounded open subset of $\R^N$ and assume that $\partial \o$ is almost of class $C^1$. Given a nonnegative function $c \in L^1(\partial \o)$, $\sigma > 0$, and a continuous function $L \colon \partial \o \to [0,\infty)$ such that $\|L\|_{L^\infty(\partial\o)} \le \sigma$, let $\tau \colon \partial \o \times \R^M \to [- \infty, \infty)$ be a Carath\'eodory function as in \eqref{tau-LB}. Let $\f$ and $\h$ be defined as in \eqref{F} and \eqref{H}, respectively. Furthermore, assume that there exist $\tilde{u} \in W^{1,1}(\o; \R^M)$ as in \eqref{F<infty}. Then, for every $u \in L^1(\o; \R^M)$ there exists a sequence $\{u_n\}_{n \in \N}$ of functions in $W^{1,1}(\o; \R^M)$ such that $u_n \to u$ in $L^1(\o; \R^M)$ and 
\[
\limsup_{n \to \infty} \f(u_n) \le \h(u). 
\]
\end{proposition}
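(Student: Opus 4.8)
The plan is to exploit the variational structure of the Yosida transform $\hat\tau$ (see \eqref{eq:sigma}): at $\hno$-a.e.\@ boundary point one replaces the trace $\tr u(x)$ by a value $q(x)$ at which $\tau$ is nearly minimal, paying the transport cost $\sigma|q(x)-\tr u(x)|$ as bulk gradient. This cost is then absorbed by \Cref{IF-lem}, which produces a competitor with prescribed trace $q$ and controlled gradient. Concretely, if $u\notin BV(\o;\R^M)$ then $\h(u)=\infty$ and any sequence of smooth functions converging to $u$ in $L^1(\o;\R^M)$ (e.g.\@ mollifications) does the job; hence I may assume $u\in BV(\o;\R^M)$, in which case $\h(u)$ is finite by \Cref{Hfinite}.

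The heart of the argument is, for fixed $\e>0$, the construction of a measurable selection $q\in L^1(\partial\o;\R^M)$ satisfying
\[
\int_{\partial\o}\big[\tau(x,q(x))+\sigma|q(x)-\tr u(x)|\big]\,d\hno \le \int_{\partial\o}\hat\tau(x,\tr u(x))\,d\hno+\e.
\]
To produce such $q$ I would first invoke \Cref{scorza-dragoni} to restrict $\tau$ to a compact set on which it is jointly continuous, and there select, by a standard measurable selection, a function $q^{(\delta)}$ for which $\tau(x,q^{(\delta)}(x))+\sigma|q^{(\delta)}(x)-\tr u(x)|\le \hat\tau(x,\tr u(x))+\delta$ for $\hno$-a.e.\@ $x$; exhausting $\partial\o$ by such compacta yields a selection defined almost everywhere. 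Since $\hat\tau(\cdot,\tr u)\in L^1(\partial\o)$ by \Cref{Hfinite}, the integrand above lies between $\hat\tau(\cdot,\tr u)$ and $\hat\tau(\cdot,\tr u)+\delta$, hence is integrable.

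The delicate point, and the main obstacle, is that $q^{(\delta)}$ need not belong to $L^1(\partial\o;\R^M)$: in the borderline case $\|L\|_{L^\infty(\partial\o)}=\sigma$ the infimum defining $\hat\tau$ may be approached only as $|q|\to\infty$, so the lower bound \eqref{tau-LB} gives no control on $|q^{(\delta)}-\tr u|$. I would remedy this by a truncation-with-fallback, setting $q(x):=q^{(\delta)}(x)$ where $|q^{(\delta)}(x)|\le R$ and $q(x):=\tr\tilde u(x)$ otherwise, where $\tilde u$ is the function from \eqref{F<infty}. Then $q\in L^1(\partial\o;\R^M)$, since on $\{|q^{(\delta)}|\le R\}$ it is bounded and $\hno(\partial\o)<\infty$, while on the complement it equals $\tr\tilde u\in L^1(\partial\o;\R^M)$. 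The energy over $\{|q^{(\delta)}|> R\}$ equals $\int_{\{|q^{(\delta)}|> R\}}\big[\tau(x,\tr\tilde u)+\sigma|\tr\tilde u-\tr u|\big]\,d\hno$, which tends to $0$ as $R\to\infty$ because $\tau(\cdot,\tr\tilde u)\in L^1(\partial\o)$ by \Cref{Hfinite}$(i)$ while the set shrinks to a null set; meanwhile, by dominated convergence, the energy over $\{|q^{(\delta)}|\le R\}$ tends to $\int_{\partial\o}[\tau(x,q^{(\delta)})+\sigma|q^{(\delta)}-\tr u|]\,d\hno\le\int_{\partial\o}\hat\tau(\cdot,\tr u)\,d\hno+\delta\,\hno(\partial\o)$. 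Choosing $\delta$ small and $R$ large then secures the displayed bound.

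Finally, applying \Cref{IF-lem} with $p:=q$ furnishes a sequence $\{u_n\}\subset W^{1,1}(\o;\R^M)$ with $\tr u_n=q$, $u_n\to u$ in $L^1(\o;\R^M)$, and $\limsup_{n}\int_\o|\nabla u_n|\,dx\le|Du|(\o)+\int_{\partial\o}|q-\tr u|\,d\hno$. Since the boundary term of $\f(u_n)$ is the constant $\int_{\partial\o}\tau(x,q)\,d\hno$, I obtain
\[
\limsup_{n\to\infty}\f(u_n)\le \sigma|Du|(\o)+\int_{\partial\o}\big[\tau(x,q)+\sigma|q-\tr u|\big]\,d\hno\le\h(u)+\e.
\]
A standard diagonal argument over $\e=1/k$ then produces a single recovery sequence with $\limsup_n\f(u_n)\le\h(u)$, which is the claim. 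Once the truncation-with-fallback device is in place to guarantee integrability of the selection, every remaining step is routine.
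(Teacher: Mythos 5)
Your proposal is correct and shares the paper's overall architecture---choose a near-optimal boundary datum for the Yosida transform $\hat\tau$ and realize it via \Cref{IF-lem}---but it handles the selection step by a genuinely different route. The paper first reduces, in \Cref{simplifications}, to $u \in W^{1,1}(\o;\R^M)\cap C(\overline{\o};\R^M)$ and to the strict inequality $\|L\|_{L^\infty(\partial\o)} < \sigma$ (the latter via the perturbation $\tau + \tfrac1k|p|$); these reductions make the selection elementary: on a Scorza--Dragoni compactum intersected with $\{c \le A_\eta\}$, every $\delta$-minimizer in \eqref{eq:sigma} lies in a fixed ball $\overline{B(0,R_\eta)}$, so uniform continuity and a finite covering yield an explicit piecewise-constant selection $p_{\e,\eta}$, with fallback $\tr\tilde u$ off the good set. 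You instead invoke an abstract measurable selection theorem for the closed-valued measurable multifunction $x \mapsto \{q \in \R^M: \tau(x,q)+\sigma|q-\tr u(x)| \le \hat\tau(x,\tr u(x))+\delta\}$ and repair the possible non-integrability of the selection by truncating and falling back to $\tr\tilde u$ on $\{|q^{(\delta)}|>R\}$. This buys something real: both reductions of \Cref{simplifications} become unnecessary (the borderline case $\|L\|_{L^\infty(\partial\o)}=\sigma$ is absorbed by the truncation rather than by perturbing $\tau$, and $u$ never needs to be continuous up to the boundary). The price is the appeal to a selection theorem (Kuratowski--Ryll-Nardzewski/Aumann) whose hypotheses---nonempty closed values and measurability of the multifunction, the latter following because the defining inequality involves a Carath\'eodory integrand and the measurable functions $\tr u$ and $\hat\tau(\cdot,\tr u(\cdot))$---you assert without verification; this is standard enough that I would not call it a gap, but it is exactly the machinery the paper's explicit construction is designed to avoid. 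The remaining steps (integrability of the truncated datum, the limits $\delta\to 0$ and $R\to\infty$, the application of \Cref{IF-lem}, and the diagonalization over $\e=1/k$) are sound and parallel the paper's treatment of the error term $\mathcal{R}(\e,\eta)$.
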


Before proceeding with the proof of the limsup inequality, we show that the desired result holds whenever it holds under certain additional assumptions.

\begin{lemma}
\label{simplifications}
For the purpose of proving \Cref{limsup-prop}, it is not restrictive to assume that:
\begin{itemize}
\item[$(i)$] $u \in W^{1,1}(\o; \R^M) \cap C(\overline{\o}; \R^M)$;
\item[$(ii)$] $\|L\|_{L^{\infty}(\partial \o)} < \sigma$.
\end{itemize}
\end{lemma}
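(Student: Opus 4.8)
The plan is to reformulate \Cref{limsup-prop} as the single assertion that $\overline{\f}(u) \le \h(u)$ for every $u \in L^1(\o;\R^M)$ (the existence of a recovery sequence with $\limsup_n \f(u_n) \le \h(u)$ being equivalent to this, since $\overline{\f}$ is attained along a sequence), and to establish the two reductions separately. Both are soft arguments resting on two structural facts: the relaxed functional $\overline{\f}$ is $L^1$-lower semicontinuous (being an $L^1$-lower semicontinuous envelope), and the relaxation is monotone with respect to pointwise inequalities between energies, i.e.\ $\f \le \g$ implies $\overline{\f} \le \overline{\g}$.

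For reduction $(i)$ I would fix the data (so in particular $\|L\|_{L^\infty(\partial\o)}\le\sigma$, which makes $\hat\tau$ well defined) and suppose $\overline{\f}(u)\le\h(u)$ is known for every $u \in W^{1,1}(\o;\R^M)\cap C(\overline{\o};\R^M)$. Given an arbitrary $u\in L^1(\o;\R^M)$, if $u\notin BV(\o;\R^M)$ then $\h(u)=\infty$ and there is nothing to prove, so I may assume $u\in BV(\o;\R^M)$. Invoking \Cref{smoothapprox}, I produce $\{u_k\}_{k\in\N}\subset W^{1,1}(\o;\R^M)\cap C(\overline{\o};\R^M)$ with $u_k\to u$ in $L^1$, $|Du_k|(\o)\to|Du|(\o)$, and $\tr u_k\to\tr u$ in $L^1(\partial\o;\R^M)$. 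The crucial point is that $\h(u_k)\to\h(u)$: the bulk term converges by strict convergence, while for the surface term the $\sigma$-Lipschitz continuity of $\hat\tau(x,\cdot)$ gives $\bigl|\int_{\partial\o}\hat\tau(x,\tr u_k)-\hat\tau(x,\tr u)\,d\hno\bigr|\le\sigma\int_{\partial\o}|\tr u_k-\tr u|\,d\hno\to0$, all quantities being finite by \Cref{Hfinite}. Since $\overline{\f}$ is lower semicontinuous along $u_k\to u$ and $\overline{\f}(u_k)\le\h(u_k)$ by hypothesis, I conclude $\overline{\f}(u)\le\liminf_k\overline{\f}(u_k)\le\liminf_k\h(u_k)=\h(u)$. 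Thus the smooth case already forces the general $L^1$ case, within the fixed (possibly non-strict) data.

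For reduction $(ii)$ I would fix $u\in BV(\o;\R^M)$ (the case $u\notin BV$ again being trivial) and data with $\|L\|_{L^\infty(\partial\o)}\le\sigma$, and introduce the perturbed coefficient $\sigma_\delta\coloneqq\sigma+\delta$ for $\delta>0$, writing $\f_\delta$, $\h_\delta$, $\hat\tau_{\sigma_\delta}$, $\overline{\f_\delta}$ for the corresponding objects. Then $\|L\|_{L^\infty(\partial\o)}\le\sigma<\sigma_\delta$, so the data $(\sigma_\delta,L,c)$ lies in the strict regime and, by reduction $(i)$ (now available for all competitors), $\overline{\f_\delta}(u)\le\h_\delta(u)$. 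Since $\f\le\f_\delta$ pointwise on $L^1$, monotonicity of the relaxation yields $\overline{\f}(u)\le\overline{\f_\delta}(u)\le\h_\delta(u)$. It then remains to let $\delta\to0^+$ and show $\h_\delta(u)\to\h(u)$: the bulk part $\sigma_\delta|Du|(\o)\to\sigma|Du|(\o)$ is immediate, and for the surface part I would argue that $\hat\tau_{\sigma_\delta}(x,p)$ decreases to $\hat\tau_\sigma(x,p)$ pointwise as $\delta\downarrow0$ — monotonicity in the Yosida parameter is clear from \eqref{eq:sigma}, and the limit identity follows by testing the infimum against near-optimal competitors $q$ — after which I pass to the limit under the integral sign by monotone (or dominated) convergence, with the integrable bounds supplied by \Cref{Hfinite}. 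This gives $\overline{\f}(u)\le\h(u)$ in the general non-strict case.

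The only genuinely delicate point is the passage $\h_\delta(u)\to\h(u)$ in reduction $(ii)$, where both the pointwise convergence $\hat\tau_{\sigma_\delta}\to\hat\tau_\sigma$ and an integrable majorant must be produced; both become routine once \Cref{Hfinite} is invoked. Everything else is a standard combination of strict approximation in $BV$ with the lower semicontinuity and monotonicity of the relaxation, and the two reductions compose in the stated order — first $(i)$, extending from smooth competitors to all of $L^1$ within any fixed data, then $(ii)$, removing the strictness — so that validity of \Cref{limsup-prop} under the extra assumptions $(i)$ and $(ii)$ indeed recovers the full statement.
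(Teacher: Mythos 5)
Your argument is correct. Reduction $(i)$ is essentially the paper's: both rest on \Cref{smoothapprox}, on the convergence $\h(v_n)\to\h(u)$ along the strict approximating sequence (bulk term by strict convergence, surface term by the $\sigma$-Lipschitz bound on $\hat{\tau}(x,\cdot)$ together with \Cref{Hfinite}), and on either an explicit diagonal extraction (the paper) or, equivalently, the $L^1$-lower semicontinuity of $\overline{\f}$ (your phrasing). The only genuine divergence is in reduction $(ii)$: the paper keeps $\sigma$ fixed and perturbs the surface density, setting $\tau_k(x,p)\coloneqq\tau(x,p)+\frac{1}{k}|p|$, so that the effective coefficient becomes $L_k=L-\frac{1}{k}$ with $\|L_k\|_{L^{\infty}(\partial\o)}<\sigma$ and $\f\le\f_k$; you instead keep $\tau$ fixed and inflate the bulk coefficient to $\sigma_\delta=\sigma+\delta$, so that $\|L\|_{L^{\infty}(\partial\o)}<\sigma_\delta$ and $\f\le\f_\delta$. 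The two perturbations are dual to one another and leave exactly the same remaining work: pointwise convergence of the perturbed Yosida transforms to $\hat{\tau}$ by testing near-optimal competitors (your computation is the paper's \eqref{ptw-k} with the roles of the two perturbations exchanged), an integrable majorant built from $\tilde{u}$ as in \Cref{Hfinite}, dominated convergence, and a final diagonalization (or monotonicity of the relaxation). Neither version buys anything over the other; in yours one should just record, as you implicitly do, that the perturbed data still satisfy the hypotheses of \Cref{limsup-prop}, in particular $\f_\delta(\tilde{u})=\f(\tilde{u})+\delta\int_{\o}|\nabla\tilde{u}(x)|\,dx<\infty$, and the composition order you state --- first $(i)$ within the strict regime, then $(ii)$ to remove strictness --- is the right one.
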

Note that the second item in the lemma plays a role only in the case of sets of class $C^2$, for which we only assume $\|L\|_{L^{\infty}(\partial \o)} \le \sigma$ (see \Cref{thm:main}).

\begin{proof}[Proof of \Cref{simplifications}]
We divide the proof into two steps.
\newline
\textbf{Step 1:} Without loss of generality we can assume that $\h(u) < \infty$, since otherwise there is nothing to do. Thus $u \in BV(\o; \R^M)$, and the proof of statement $(i)$ follows by a diagonal argument; we present here the details for the reader's convenience. Fix $u \in BV(\o; \R^M)$ and let $\{v_n\}_{n \in \N}$ be given as in \Cref{smoothapprox}. Then, for every $\e > 0$ there exists $n(\e) \in \N$ such that for every $n \ge n(\e)$ we have 
\begin{equation}
\label{vnlimsup}
\mathcal{H}(v_n) \le \mathcal{H}(u) + \e \quad \text{ and } \quad \int_{\o}|v_n(x) - u(x)|\,dx \le \e.
\end{equation}
For every $n \in \N$, let $\{v_{n, k}\}_{k \in \N} \subset W^{1,1}(\o; \R^M)$ be a recovery sequence for $v_n$, i.e.\@, $v_{n, k} \to v_n$ in $L^1(\o;\R^M)$ as $k \to \infty$ and furthermore
\[
\limsup_{k \to \infty} \f(v_{n,k}) \le \mathcal{H}(v_n).
\]
Then, for every $n \in \N$ we can find $k_n \in \N$ with the property that 
\begin{equation}
\label{vnklimsup}
\f\left(v_{n, k_n}\right) \le \mathcal{H}(v_n) + \frac{1}{n} \quad \text{ and } \quad \int_{\o}|v_{n,k_n}(x) - v_n(x)|\,dx \le \frac{1}{n}.
\end{equation}
Set $u_n \coloneqq v_{n, k_n}$ and observe that by \eqref{vnlimsup} and \eqref{vnklimsup} we have
\[
\int_{\o}|u_n(x) - u(x)|\,dx \le \int_{\o}|v_{n, k_n}(x) - v_n(x)|\,dx + \int_{\o}|v_n(x) - u(x)|\,dx  \le \frac{1}{n} + \e
\]
whenever $n \ge n(\e)$. In particular, by letting first $n \to \infty$ and then $\e \to 0$, we see that $u_n \to u$ in $L^1(\o;\R^M)$. Similarly, since 
\[
\f(u_n) \le \mathcal{H}(v_n) + \frac{1}{n} \le \mathcal{H}(u) + \e + \frac{1}{n},
\]
we readily obtain that 
\[
\limsup_{n \to \infty} \f(u_n) \le \mathcal{H}(u) + \e.
\]
Once again, letting $\e \to 0$ yields the desired result.
\newline
\textbf{Step 2:} For $k \in \N$, we let $\tau_k \colon \partial \o \times \R^M \to \R$ be defined via 
\[
\tau_k(x, p) \coloneqq \tau(x, p) + \frac{1}{k}|p|.
\]
As one can readily check, $\tau_k$ satisfies a similar lower bound to \eqref{tau-LB}, with $L$ replaced by 
\[
L_k(x) \coloneqq L(x) - \frac{1}{k}.
\]
Since $\|L_k\|_{L^{\infty}(\partial \o)} < \sigma$, we have that for every $u \in BV(\o; \R^M)$ there exists a sequence $\{u^k_n\}_{n \in \N}$ of functions in $W^{1,1}(\o; \R^M)$ such that $u_n^k \to u$ in $L^1(\o; \R^M)$ as $n \to \infty$, and 
\[
\limsup_{n \to \infty} \f_k(u^k_n) \le \sigma|Du|(\o) + \int_{\partial \o}\hat{\tau}_k(x, u(x))\,d\hno,
\]
where $\f_k$ is the functional obtained by replacing $\tau$ with $\tau_k$ in the definition of $\f$ (see \eqref{F}). Similarly, for every $x \in \partial \o$, $\hat{\tau}_k(x, \cdot)$ is defined as the $\sigma$-Yosida transform of $\tau_k$ (see \eqref{eq:sigma} for the precise definition). We claim that $\hat{\tau}_k(x, u(x)) \to \hat{\tau}(x, u(x))$ for $\hno$-a.e.\@ $x \in \partial \o$. Indeed, if we fix $\delta > 0$ and let $p_x$ be such that 
\[
\hat{\tau}(x, u(x)) + \delta \ge \tau(x, p_x) + \sigma|p_x - u(x)|,
\]
then, for $\hno$-a.e.\@ $x \in \partial \o$ we have
\begin{equation}
\label{ptw-k}
\hat{\tau}(x, u(x)) \le \hat{\tau}_k(x, u(x)) \le \tau_k(x, p_x) + \sigma|p_x - u(x)| \le \hat{\tau}(x, u(x)) + \frac{1}{k}|p_x| + \delta.
\end{equation}
The claim readily follows by letting first $k \to \infty$ and then $\delta \to 0$. Let $\tilde{u}$ be given as in \eqref{F<infty}; then, for $\hno$-a.e.\@ $x \in \partial \o$ we have that 
\begin{align*}
- c(x) - \sigma|u(x)| \le \hat{\tau}_k(x, u(x)) & \le \tau_k(x, \tr \tilde{u}(x)) + \sigma |u(x) - \tr \tilde{u}(x)|   \\
& = \tau(x, \tr \tilde{u}(x)) + \frac{1}{k}|\tr \tilde{u}(x)| + \sigma|u(x) - \tr \tilde{u}(x)|,
\end{align*}
and therefore there exists a function $g \in L^1(\partial \o)$ such that $|\hat{\tau}_k(x, u(x))| \le g(x)$ holds for $\hno$-a.e.\@ $x \in \partial \o$. In turn, by Lebesgue's dominated convergence theorem we have that
\[
\lim_{k \to \infty} \int_{\partial \o} \hat{\tau}_k(x, u(x)) \,d\hno = \int_{\partial \o}\hat{\tau}(x, u(x))\,d\hno.
\]
Since $\f \le \f_k$, the rest of the proof follows by a standard diagonal argument, as in the previous step; we omit the details.
\end{proof}

\begin{proof}[Proof of \Cref{limsup-prop}]
In view of \Cref{simplifications}, throughout the proof we can assume that $\|L\|_{L^{\infty}(\partial \o)} < \sigma$ and furthermore, we fix a function $u \in W^{1,1}(\o; \R^M) \cap C(\overline{\o}; \R^M)$. We divide the proof into two steps.
\newline
\textbf{Step 1:} Let $\hat{\tau}$ be given as in \eqref{eq:sigma} and notice that since both $\tau$ and $\hat{\tau}$ are Carath\'eodory functions, by \Cref{scorza-dragoni} we have that for every $\eta > 0$ there exists a compact set $C_{\eta} \subset \partial \o$, with
\[
\hno(\partial \o \setminus C_{\eta}) \le \eta,
\]
such that the restrictions of $\tau$ and $\hat{\tau}$ to $C_{\eta} \times \R^M$ are real-valued continuous functions. Assume without loss of generality that $\|c\|_{L^1(\partial \o)} > 0$ and let
\[
\widetilde{C}_{\eta} \coloneqq C_{\eta} \cap \left\{x \in \partial \o : c(x) \le A_{\eta} \coloneqq \eta^{-1}\|c\|_{L^1(\partial \o)}\right\}.
\]
Then, as one can readily check, we have that 
\begin{equation}
\label{smallhno1}
\hno (\partial \o \setminus \widetilde{C}_{\eta} ) \le \hno \left(\partial \o \setminus C_{\eta} \right) + \hno \left( \{x \in \partial \o : c(x) > A_{\eta}\} \right) \le 2\eta. 
\end{equation}
Finally, by the regularity of the Hausdorff measure and in view of \eqref{smallhno1}, we can find a compact set $D_{\eta} \subset \widetilde{C}_{\eta}$ such that 
\begin{equation}
\label{smallhno}
\hno(\partial \o \setminus D_{\eta}) \le 3 \eta
\end{equation}
and with the property that \eqref{tau-LB} holds for every $x \in D_{\eta}$. The purpose of this step is to show that, if $u$ is given as above, for every $\e > 0$ there exists a measurable function $p_{\e, \eta} \colon \partial \o \to \R^M$ with the property that for every $x \in D_{\eta}$ we have
\begin{equation}
\label{approxsigma}
\hat{\tau}(x, u(x)) + \e \ge \tau(x, p_{\e, \eta}(x)) + \sigma |u(x) - p_{\e, \eta}(x)|.
\end{equation}
To prove the claim, notice that if $p \in \R^M$ is such that 
\begin{equation}
\label{pbound}
\hat{\tau}(x, u(x)) + 1 \ge \tau(x, p) + \sigma |u(x) - p|,
\end{equation}
then \eqref{tau-LB} implies that
\begin{align*}
\sigma |p| & \le \sigma |u(x) - p| + \sigma |u| \\
& \le \hat{\tau}(x, u(x)) - \tau(x, p) + 1 + \sigma |u(x)| \\
& \le \hat{\tau}(x, u(x)) + c(x) + L(x)|p| + 1 + \sigma |u(x)|.
\end{align*}
Therefore, for every $x \in D_{\eta}$ we have
\[
\left\{\sigma - \|L\|_{L^{\infty}(\partial \o)}\right\}|p| \le A_{\eta} + 1 + \max\left\{\hat{\tau}(x, u(x)) + \sigma |u(x)| : x \in D_{\eta}\right\}.
\]
In particular, we have shown that there exists a positive number $R_{\eta}$ such that if $x \in D_{\eta}$ and $p$ satisfies \eqref{pbound}, then $|p| \le R_{\eta}$. Next, observe that since $u$ is continuous on the set $D_{\eta}$, so is the map $x \mapsto \hat{\tau}(\cdot , u(\cdot))$; in turn, both maps are uniformly continuous in $D_{\eta}$. Similarly, $\tau$ is uniformly continuous when restricted to the compact set $D_{\eta} \times \overline{B(0,R_{\eta})}$. Thus, for every $\e > 0$ there exists a number $\delta > 0$ such that for every $x \in D_{\eta}$, every $y \in B(x, \delta) \cap D_{\eta}$, and every $p \in \overline{B(0,R_{\eta})}$ we have 
\begin{align}
\sigma |u(x) - u(y)| & \le \frac{\e}{4}, \label{unifcont1} \\
|\hat{\tau}(x, u(x)) - \hat{\tau}(y,u(y))| & \le \frac{\e}{4}, \label{unifcont2}\\
|\tau(x, p) - \tau(y,p)| & \le \frac{\e}{4}. \label{unifcont3}
\end{align}
Once again, form the compactness of $D_{\eta}$ we see that there exists a set of points $\{x^1, \dots, x^k\} \subset D_{\eta}$ such that $D_{\eta}$ is contained in the union of the balls $\{B(x^j, \delta)\}_{j \le k}$. By definition of $\hat{\tau}$, for every $j \in \{1, \dots, k\}$ we can find $p_j \in \overline{B(0,R_{\eta})}$ with the property that 
\begin{equation}
\label{pi-def}
\hat{\tau}(x^j, u(x^j)) + \frac{\e}{4} \ge \tau (x^j, p_j) + \sigma |p_j - u(x^j)|.
\end{equation}
Then, in view of the inequalities in \eqref{unifcont1}, \eqref{unifcont2}, \eqref{unifcont3}, and \eqref{pi-def}, for every $x \in B(x^j, \delta) \cap D_{\eta}$ we have
\begin{align}
\hat{\tau}(x, u(x)) & \ge \hat{\tau}(x^j, u(x^j)) - |\hat{\tau} (x, u(x)) - \hat{\tau} (x^j, u(x^j))| \notag \\
& \ge \tau(x^j, p_j) + \sigma |p_j - u(x^j)| - \frac{\e}{2} \notag  \\
& \ge \tau(x, p_j) - |\tau(x^j, p_j) - \tau(x, p_j)| + \sigma|p_j - u(x)| - \sigma |u(x^j) - u(x)| - \frac{\e}{2} \notag  \\
& \ge \tau(x, p_j) + \sigma|p_j - u(x)| - \e. \label{peINEQ}
\end{align}
Let $p_{\e, \eta}$ be defined via
\[
p_{\e, \eta}(x) \coloneqq 
\left\{
\arraycolsep=5pt\def\arraystretch{1.5}
\begin{array}{ll}
\displaystyle p_1 & \text{ if } x \in D_{\eta} \cap B(x^1, \delta) , \\
\displaystyle p_j & \text{ if } x \in D_{\eta} \cap B(x^j, \delta) \setminus \bigcup_{i = 1}^{j - 1}B(x^i, \delta), \\
\displaystyle \tr \tilde{u} & \text{ if } x \in \partial \o \setminus D_{\eta}, 
\end{array}
\right.
\]
where $\tilde{u}$ is given as in \eqref{F<infty}.
\newline
\textbf{Step 2:} In this step we carry out the construction of an approximate recovery sequence for $u$. Observe that the function $p_{\e, \eta}$ defined in the previous step belongs to the space $L^1(\partial \o; \R^M)$; consequently, by \Cref{IF-lem} we can find a sequence $\{u_n\}_{n \in \N}$ of functions in $W^{1,1}(\o; \R^M)$ with trace $p_{\e, \eta}$, such that $u_n \to u$ in $L^1(\o; \R^M)$ and 
\[
\limsup_{n \to \infty} \int_{\o}|\nabla u_n(x)|\,dx \le \int_{\o}|\nabla u(x)|\,dx + \int_{\partial \o}|u(x) - p_{\e, \eta}(x)|\,d\hno.
\]
In turn, by \eqref{approxsigma} we have that
\begin{align*}
\limsup_{n \to \infty} \f(u_n) & \le  \int_{\o}\sigma |\nabla u(x)|\,dx + \int_{\partial \o}\tau(x, p_{\e, \eta}(x)) + \sigma |u(x) - p_{\e, \eta}(x)|\,d\hno \\
& \le \mathcal{H}(u) + \mathcal{R}(\e, \eta),
\end{align*}
where 
\[
\mathcal{R}(\e, \eta) \coloneqq \e \hno(D_{\eta}) + \int_{\partial \o \setminus D_{\eta}}\sigma |u(x) - \tr \tilde{u}(x)| + \tau(x, \tr \tilde{u}(x)) - \hat{\tau}(x, u(x))\,d\hno. 
\]
As one can readily check, \Cref{Hfinite} and \eqref{smallhno} imply that
\[
\lim_{\e \to 0^+} \lim_{\eta \to 0^+} \mathcal{R}(\e, \eta) = 0.
\]
This concludes the proof.
\end{proof}

\begin{remark}
The first step in the proof of the \Cref{limsup-prop} can be simplified significantly if $\tau$ is independent of $x$. Furthermore, in this case it is enough to require that $\tau$ is only Borel measurable. Indeed, for a given function $u \in W^{1,1}(\o;\R^M) \cap C(\overline{\o}; \R^M)$ and $\e > 0$, our aim in this case is to find an integrable function $p_{\e} \colon \partial \Omega \to \R^M$ with the property that 
\begin{equation}
\label{1no-x}
\hat{\tau}(u(x)) + \e \ge \tau(p_{\e}(x)) + \sigma |u(x) - p_{\e}(x)|
\end{equation}
for $\hno$-a.e.\@ $x \in \partial \o$. Reasoning as above, we can find a positive number $\delta$ such that $\sigma|u(x) - u(y)| \le \e/3$ whenever $|x - y| < \delta$, a finite number of points $\{x^1, \dots, x^k\} \subset \partial \o$ with the property that $\{B(x^j, \delta)\}_{j \le k}$ is a covering of $\partial \o$, and finally points $p_j \in \R^M$ such that 
\[
\hat{\tau}(u(x^j)) + \frac{\e}{3} \ge \tau(p_j) + \sigma |u(x^j) - p_j|.
\]
The condition in \eqref{1no-x} follows from similar computations to \eqref{peINEQ}, provided $p_{\e}$ is opportunely defined to be a simple function taking only values in $\{p_1, \dots, p_k\}$. Furthermore, in this case, the approximation in \Cref{simplifications} $(ii)$ is no longer needed, even when $\o$ is a smooth domain with boundary of class $C^2$.
\end{remark}

%%%%%%%%%%%%%%%%%%%%%%%%%%%%%%%%%%%%%%%%%%%%
%%%%%%%%%%%%%%%%%%%%%%%%%%%%%%%%%%%%%%%%%%%%
%%%%%%%%%%%%%%%%%%%%%%%%%%%%%%%%%%%%%%%%%%%%
%%%%%%%%%%%%%%%%%%%%%%%%%%%%%%%%%%%%%%%%%%%%

\section{Further generalizations and remarks}
\label{sec-extensions}

In this final section, we discuss how to suitably modify the arguments presented above in order to obtain two variants of \Cref{thm:main}. The first and main result of the section concerns the possibility to extend our analysis to include surface densities that are not necessarily continuous in the second variable. Lastly, we examine the case of a general Lipschitz domain under rather strict assumptions on $\tau$.

\subsection{Normal integrands}
\label{sec-normal}
In this subsection we prove that, under a mild integrability condition, we can further relax the regularity assumptions on the surface energy density $\tau$. To be precise, throughout the following we assume that $\tau \colon \partial \o \times \R^M \to [- \infty, \infty)$ satisfies:
\begin{enumerate}[label=(N.\arabic*), ref=N.\arabic*]
\item \label{N1} $\tau$ is $\Sigma(\partial \o, \hno) \times \mathcal{B}(\R^M)$-measurable, where $\Sigma(\partial \o, \hno)$ and $\mathcal{B}(\R^M)$ denote the $\sigma$-algebra of Hausdorff measurable subsets of $\partial \o$ and the Borel $\sigma$-algebra on $\R^M$, respectively;
\item \label{N2} $\tau(x, \cdot)$ is upper semicontinuous for $\hno$-a.e.\@ $x \in \partial \o$.
\end{enumerate}
Let us remark that if in condition $(ii)$ we replace \emph{upper} semicontinuous with \emph{lower} semicontinuous, then the class of functions we obtain is commonly referred to as \emph{normal functions} or \emph{normal integrands} in the relevant literature. This class is particularly well suited for problems in the calculus of variations; we refer to \cite{FL} and \cite{MR0512209} for more information on the subject.

For $\tau$ as above, set
\[
M_j(x) \coloneqq \max \left\{ \sup \left\{\tau(x, p) : p \in \overline{B(0, j)} \right\}, 0\right\}.
\]
Consider a partition of unity, namely $\{\psi_j\}_{j \in \N}$, subordinated to the open covering $\{U_j\}_{j \in \N}$, where the sets $U_j$ are defined via
\[
\begin{array}{ll}
U_1 \coloneqq B(0, 2), & \\
U_j \coloneqq B(0, j + 1) \setminus \overline{B(0, j - 1)} &  \text{ for } j \ge 2,
\end{array}
\]
and define 
\begin{equation}
\label{UBT}
T(x, p) \coloneqq \sum_{j = 1}^{\infty}M_{j + 2}(x)\psi_j(p).
\end{equation}
Notice that $T \colon \partial \o \times \R^M \to [0, \infty)$ is a Carath\'eodory function with the property that $\tau \le T$. 

The main result of this subsection can be stated as follows.
\begin{theorem}
\label{normalthm}
The conclusions of \Cref{thm:main} continue to hold even if $\tau$ fails to be Carath\'eodory, provided that the following conditions are satisfied:
\begin{itemize}
\item[$(i)$] $\tau$ satisfies \eqref{N1} and \eqref{N2}, i.e.\@, $-\tau$ is a normal integrand;
\item[$(ii)$] there exists $u_1 \in L^1(\partial \o; \R^M)$ such that 
\[
T(\cdot, u_1(\cdot)) \in L^1(\partial \o),
\]
where the function $T$ is defined as in \eqref{UBT}.
\end{itemize}
\end{theorem}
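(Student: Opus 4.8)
The plan is to deduce \Cref{normalthm} from the Carath\'eodory case of \Cref{thm:main} by squeezing $\tau$ between a decreasing sequence of Carath\'eodory densities and the majorant $T$, and then passing to the limit in the liminf and limsup inequalities separately. Conditions \eqref{N1} and \eqref{N2} say exactly that $-\tau$ is a normal integrand, so there is an increasing sequence of Carath\'eodory functions $a_k \uparrow -\tau$ (see \cite{FL} and \cite{MR0512209}). Setting $\tau_k \coloneqq \min\{-a_1,\dots,-a_k,T\}$, I obtain a decreasing sequence of Carath\'eodory functions with $\tau \le \tau_k \le T$ and $\tau_k(x,\cdot)\downarrow\tau(x,\cdot)$ for $\hno$-a.e.\ $x$. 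Since $\tau_k \ge \tau$, each $\tau_k$ obeys the lower bound \eqref{tau-LB} with the same $c$ and $L$, so the condition $\|L\|_{L^\infty(\partial\o)}\le\sigma$ (and, in the almost $C^1$ case, \eqref{upperbound}) is inherited; moreover, choosing $\tilde u\in W^{1,1}(\o;\R^M)$ with $\tr\tilde u = u_1$ via \Cref{giusti-C1}, hypothesis $(ii)$ together with $\tau_k\le T$ gives $\tau_k(\cdot,\tr\tilde u(\cdot))\le T(\cdot,u_1(\cdot))\in L^1(\partial\o)$ and hence $\f_k(\tilde u)<\infty$, where $\f_k$ is obtained from \eqref{F} with $\tau_k$ in place of $\tau$. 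Thus \Cref{thm:main} applies to each $\f_k$.

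Let $\hat{\tau}_k$ denote the $\sigma$-Yosida transform of $\tau_k$. Interchanging the two infima in \eqref{eq:sigma} gives $\inf_k\hat{\tau}_k(x,p)=\inf_q\{\inf_k\tau_k(x,q)+\sigma|p-q|\}=\hat\tau(x,p)$, so $\hat\tau$ is a countable infimum of the Carath\'eodory functions $\hat{\tau}_k$; being $\sigma$-Lipschitz in $p$ and measurable in $x$, it is itself Carath\'eodory, and therefore \eqref{H} is well defined. Since the proof of \Cref{Hfinite} uses only the lower bound \eqref{tau-LB} and \eqref{F<infty}, it remains valid, so that $\hat\tau(\cdot,v(\cdot))\in L^1(\partial\o)$ for all $v\in L^1(\partial\o;\R^M)$; and for fixed $u\in BV(\o;\R^M)$ the functions $\hat{\tau}_k(\cdot,\tr u(\cdot))$ are squeezed, by monotonicity, between the $L^1$ functions $\hat\tau(\cdot,\tr u(\cdot))$ and $\hat{\tau}_1(\cdot,\tr u(\cdot))$. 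Dominated convergence then yields $\h_k(u)\to\h(u)$, where $\h_k$ is \eqref{H} with $\hat{\tau}_k$ in place of $\hat\tau$.

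With $\hat\tau$ now known to be Carath\'eodory, the liminf inequality requires no new idea: the arguments of \Cref{liminf-C2} and \Cref{liminf-C1pw} invoke $\tau$ only through the lower bound \eqref{tau-LB}, the elementary inequality $\hat\tau(x,\tr u(x))\le\tau(x,\tr u_n(x))+\sigma|\tr u_n(x)-\tr u(x)|$ coming directly from \eqref{eq:sigma}, the compactness statement \Cref{lem:cmpt} (which again uses only \eqref{tau-LB}), and \Cref{Hfinite}; continuity of $\tau(x,\cdot)$ is never used, so these proofs carry over verbatim and give $\liminf_n\f(u_n)\ge\h(u)$. For the limsup inequality, fix $u\in L^1(\o;\R^M)$; we may assume $u\in BV(\o;\R^M)$ with $\h(u)<\infty$, since otherwise $\h(u)=+\infty$ and there is nothing to prove. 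Applying \Cref{thm:main} to the Carath\'eodory density $\tau_k$ yields sequences $u^k_n\to u$ in $L^1(\o;\R^M)$ with $\limsup_n\f_k(u^k_n)\le\h_k(u)$, and $\f\le\f_k$ (because $\tau\le\tau_k$) gives $\limsup_n\f(u^k_n)\le\h_k(u)$. Since $\h_k(u)\to\h(u)$, a diagonal extraction produces a single recovery sequence, proving $\overline{\f}(u)\le\h(u)$ and, together with the liminf inequality, the theorem.

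The main difficulty is concentrated in the construction of the approximants $\tau_k$: one needs that a density which is only upper semicontinuous in $p$ --- equivalently, whose negative is a normal integrand --- can be realized as a pointwise decreasing limit of Carath\'eodory functions, and that this approximation can be kept below the Carath\'eodory majorant $T$ without destroying monotonicity. This is precisely where the structural assumptions \eqref{N1}--\eqref{N2} and the integrability hypothesis $(ii)$ enter: the former supply the Carath\'eodory representation, while the latter, through $T$, simultaneously guarantees \eqref{F<infty} for every $\f_k$ and the $L^1$-domination that makes the passage $\h_k(u)\to\h(u)$ legitimate.
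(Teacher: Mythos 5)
Your proof is correct and follows essentially the same strategy as the paper's: approximate $\tau$ from above by a decreasing sequence of Carath\'eodory densities dominated by $T$, apply the Carath\'eodory case of \Cref{thm:main} to each $\f_k$, and pass to the limit via dominated convergence (using hypothesis $(ii)$ for the $L^1$ majorant of $\hat{\tau}_k$), with the liminf inequality carrying over unchanged. The only difference is cosmetic: the paper constructs the approximants explicitly as $\tau_k = T + t_k$ with $t_k(x,p) = \sup_q\{\tau(x,q) - T(x,q) - k|p-q|\}$ (\Cref{Car-approx}), whereas you truncate a cited abstract Carath\'eodory approximation of $-\tau$ by $T$; both yield the same monotone sandwich $\tau \le \tau_k \le T$.
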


We begin by recalling a well-known approximation result, which states that every normal integrand is the pointwise limit of an increasing sequence of Carath\'eodory functions (see, for example, Remark 1 in \cite{MR344980} and Corollary 1 in \cite{MR699706}). A proof of this fact, adapted to our setting, is included here for the reader's convenience. 

\begin{lemma}
\label{Car-approx}
Let $\tau \colon \partial \o \times \R^M \to [- \infty, \infty)$ be as in \eqref{N1} and \eqref{N2}. Then there exists a decreasing sequence of Carath\'eodory functions, namely $\{\tau_k\}_{k \in \N}$, such that $\tau_k(x, \cdot) \to \tau(x, \cdot)$ pointwise in $\R^M$ for $\hno$-a.e.\@ $x \in \partial \o$.
\end{lemma}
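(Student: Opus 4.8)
The plan is to reduce the statement to the classical inf-convolution (Yosida-type) regularization, after first rendering $\tau$ sign-definite by means of the Carath\'eodory majorant $T$ from \eqref{UBT}. Since $\tau \le T$ with $T$ a nonnegative Carath\'eodory function, the function $\tilde g := T - \tau$ is nonnegative, jointly $\Sigma(\partial \o, \hno) \times \mathcal{B}(\R^M)$-measurable, and lower semicontinuous in $p$ for $\hno$-a.e.\@ $x$, being the sum of a continuous function and the lower semicontinuous function $-\tau$ (recall \eqref{N2}). In other words, $\tilde g$ is a nonnegative normal integrand, and it suffices to produce an \emph{increasing} sequence of Carath\'eodory functions $\tilde g_k \uparrow \tilde g$, since then $\tau_k := T - \tilde g_k$ will be the desired decreasing sequence.

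For each $k \in \N$, I would define the inf-convolution
\[
\tilde g_k(x, p) := \inf_{q \in \R^M}\{\tilde g(x, q) + k|p - q|\}.
\]
Taking $q = p$ gives $\tilde g_k \le \tilde g$, while the elementary inequality $k|p - q| \le k'|p - q|$ for $k \le k'$ shows that $\{\tilde g_k\}_{k \in \N}$ is increasing in $k$. Each slice $\tilde g_k(x, \cdot)$ is $k$-Lipschitz, hence continuous, and on the measurable set where $\tilde g(x, \cdot) \not\equiv +\infty$ (equivalently $\tau(x, \cdot) \not\equiv -\infty$) the infimum is finite, by comparison with any point at which $\tilde g(x, \cdot)$ is finite. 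The measurability of $x \mapsto \tilde g_k(x, p)$ for fixed $p$ follows from the measurable projection theorem: since $(x, q) \mapsto \tilde g(x, q) + k|p - q|$ is again a normal integrand and $\Sigma(\partial \o, \hno)$ is complete, the map $x \mapsto \inf_q\{\cdots\}$ is $\Sigma(\partial \o, \hno)$-measurable. This is precisely the content of the cited results, and together with the Lipschitz continuity in $p$ it shows that $\tilde g_k$ is Carath\'eodory.

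The convergence $\tilde g_k(x, p) \uparrow \tilde g(x, p)$ is then verified pointwise. Fixing $(x, p)$ and setting $\bar g := \sup_k \tilde g_k(x, p) \le \tilde g(x, p)$, I may assume $\bar g < \infty$. Choosing near-minimizers $q_k$ with $\tilde g(x, q_k) + k|p - q_k| \le \tilde g_k(x, p) + 1/k$ and exploiting $\tilde g \ge 0$ yields $|p - q_k| \le (\bar g + 1)/k \to 0$, so $q_k \to p$; the lower semicontinuity of $\tilde g(x, \cdot)$ then gives $\tilde g(x, p) \le \liminf_k \tilde g(x, q_k) \le \bar g$, whence $\bar g = \tilde g(x, p)$. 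Finally I would set $\tau_k := T - \tilde g_k$ on the measurable set $\{x : \tau(x, \cdot) \not\equiv -\infty\}$ and $\tau_k := -k$ on its complement; both pieces are finite and Carath\'eodory, the sequence $\{\tau_k\}$ is decreasing, and $\tau_k(x, \cdot) \to \tau(x, \cdot)$ pointwise for $\hno$-a.e.\@ $x$.

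The main obstacle is the measurability of the inf-convolution in the variable $x$, which genuinely requires the completeness of the Hausdorff $\sigma$-algebra together with the measurable projection theorem; without the prior normalization to the nonnegative integrand $\tilde g$, the infimum could also take the value $-\infty$, so the role of the Carath\'eodory majorant $T$ is essential. Once these two points are in place, the monotonicity, Lipschitz continuity, and pointwise convergence are routine.
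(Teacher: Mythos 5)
Your proof is correct and takes essentially the same route as the paper: your inf-convolution of $\tilde g = T - \tau$ is exactly the negative of the paper's sup-convolution $t_k(x,p) = \sup_q\{t(x,q) - k|p-q|\}$ of $t = \tau - T$, so your $\tau_k = T - \tilde g_k$ coincides with the paper's $\tau_k = t_k + T$. You merely spell out the measurability of the marginal infimum and the treatment of the set where $\tau(x,\cdot) \equiv -\infty$ (which is $\hno$-negligible anyway under the standing lower bound \eqref{tau-LB}), details the paper delegates to the cited references.
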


\begin{proof}
For $T$ as in \eqref{UBT}, let $t(x, p) \coloneqq \tau(x, p) - T(x, p)$, set 
\[
t_k(x, p) \coloneqq \sup \left\{t(x, q) - k |p - q| : q \in \R^M\right\},
\]
and define
\[
\tau_k(x, p) \coloneqq t_k(x, p) + T(x,p).
\]
We claim that the sequence $\{\tau_k\}_{k \in \N}$ has all the desired properties. Indeed, it follows readily from the definition that $\{\tau_k\}_{k \in \N}$ is a decreasing sequence of Carath\'eodory functions. Thus, to conclude it is enough to show that $\tau_k(x, \cdot) \to \tau(x, \cdot)$ for $\hno$-a.e.\@ $x \in \partial \o$. This, in turn, follows from the fact that $t(x, \cdot)$ is upper semicontinuous for $\hno$-a.e.\@ $x \in \partial \o$. Indeed, for every such $x$ we have that $t_k(x, \cdot) \to t(x, \cdot)$ pointwise in $\R^M$ (see, for example, Lemma 5.30 in \cite{FL}).
\end{proof}

\begin{proof}[Proof of \Cref{normalthm}]
Observe that the proof of the liminf inequality remains unchanged. Thus, in the following we describe how to suitably modify the arguments presented in the previous section. Let $\{\tau_k\}_{k \in \N}$ be given as in \Cref{Car-approx}. Then, reasoning as in the proof of \Cref{simplifications}, it is enough to show that 
\begin{equation}
\label{nts-normal}
\int_{\partial \o} \hat{\tau}_k(x, u(x))\,d\hno \to \int_{\partial \o} \hat{\tau}(x, u(x))\,d\hno 
\end{equation}
for all $u \in L^1(\partial \o; \R^M)$. To this end, let $u \in L^1(\partial \o; \R^M)$ be given. As one can readily check (for example, by reproducing the argument used in \eqref{ptw-k}), $\hat{\tau}_k(\cdot, u(\cdot)) \to \hat{\tau}(\cdot, u(\cdot))$ pointwise almost everywhere in $\partial \o$. Notice that for $u_1$ as in the statement, we have that $\tau_1(\cdot, u_1(\cdot)) \in L^1(\partial \o)$. Indeed, since $t_1 \le 0$, for $\hno$-a.e.\@ $x \in \partial \o$ we have that
\[
- c(x) - \sigma |u_1(x)| \le \tau(x, u_1(x)) \le \tau_1(x, u_1(x)) = t_1(x, u_1(x)) + T(x, u_1(x)) \le T(x, u_1(x)).
\]
In turn, reasoning as in \eqref{hatL1}, we see that $\hat{\tau}_1(\cdot, u(\cdot)) \in L^1(\partial \o)$ for all $u \in L^1(\partial \o; \R^M)$. In particular, since 
\[
|\hat{\tau}_k(x, u(x))| \le \max\left\{c(x) + \sigma |u(x)|, |\hat{\tau}_1(x, u(x))|\right\}
\]
holds for $\hno$-a.e.\@ $x \in \partial \o$, we are in a position to apply Lebesgue's dominated convergence theorem and \eqref{nts-normal} readily follows. This concludes the proof.
\end{proof}

%%%%%%%%%%%%%%%%%%%%%%%%%%%%%%%%%%%%%%%%%%%%
%%%%%%%%%%%%%%%%%%%%%%%%%%%%%%%%%%%%%%%%%%%%

\subsection{Lower semicontinuity on Lipschitz domains}
\label{sec-Lip}
We conclude with a brief discussion of the case in which $\o$ is a general Lipschitz domain. To be precise, we show that the techniques presented in \Cref{sec-liminf} can be readily modified to show that $\f$ admits a lower semicontinuous extension in $BV(\o; \R^M)$. The increased generality on the domain, however, comes with the drawback that we significantly restrict the class of surface densities for which the result applies to those which satisfy a certain Lipschitz condition. This is made precise in the following proposition.
 
\begin{proposition}
\label{Lip-dom}
Let $\o$ be a bounded open subset of $\R^N$ with Lipschitz continuous boundary. Let $\tau \colon \partial \o \times \R^M \to [- \infty, \infty)$ be a Carath\'eodory function such that 
\begin{equation}
\label{well-def-Lip}
\tau(x, p) \ge - c(x) - \beta |p|
\end{equation}
for $\hno$-a.e.\@ $x \in \partial \o$ and for all $p \in \R^M$, where $c \in L^1(\partial \o)$ is nonnegative and $\beta > 0$. Moreover, assume that
\begin{equation}
\label{Lip-lip}
|\tau(x, p) - \tau(x, q)| \le \gamma(x)|p - q|
\end{equation}
for $\hno$-a.e.\@ $x \in \partial \o$ and for all $p, q \in \R^M$, where $\gamma \colon \partial \o \to [0, \infty)$ is a continuous function. Given $\sigma > 0$, let $q_{\partial \o}$ be given as in \Cref{q-def}, and assume that for some $\e_0 > 0$
\begin{equation}
\label{Lip-gamma}
\gamma(x)q_{\partial \o}(x) \le (1 - 2\e_0)\sigma
\end{equation}
for all $x \in \partial \o$. Furthermore, let $\f$ and $\overline{\f}$ be given as in \eqref{F} and \eqref{Fbar}, respectively. Then, for every $u \in BV(\o; \R^M)$ we have that
\[
\overline{\f}(u) = |Du|(\o) + \int_{\partial \o}\tau(x, \tr u(x))\,d\hno.
\]
\end{proposition}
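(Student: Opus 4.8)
The plan begins with the remark that under the present hypotheses the $\sigma$-Yosida transform $\hat{\tau}$ in \eqref{eq:sigma} coincides with $\tau$, so that the right-hand side of the asserted identity is precisely $\h(u)$ (see \eqref{H}). To see this, note first that $q_{\partial\o}(x) \ge 1$ for every $x \in \partial\o$ (compare \Cref{rem:Q}; this follows from \Cref{q-def} upon testing the supremum with thin slabs adherent to $\partial\o$ near $x$). Hence \eqref{Lip-gamma} forces $\gamma(x) \le \gamma(x) q_{\partial\o}(x) \le (1 - 2\e_0)\sigma < \sigma$, and by \eqref{Lip-lip} the map $\tau(x, \cdot)$ is $\sigma$-Lipschitz for $\hno$-a.e.\@ $x$. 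Since $\hat{\tau}(x, \cdot)$ is the greatest $\sigma$-Lipschitz function lying below $\tau(x, \cdot)$, we get $\hat{\tau}(x, \cdot) = \tau(x, \cdot)$, whence $\h(u) = \sigma|Du|(\o) + \int_{\partial\o}\tau(x, \tr u(x))\,d\hno$. Thus the proposition reduces to proving $\overline{\f}(u) = \h(u)$ for every $u \in BV(\o; \R^M)$, which I would establish by the usual two inequalities.

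For the upper bound $\overline{\f} \le \h$, I would avoid the delicate recovery construction of \Cref{limsup-prop}: when $\tau$ is Lipschitz, the smooth approximation of \Cref{smoothapprox} already suffices on an arbitrary Lipschitz domain. Assuming $\h(u) < \infty$ (otherwise there is nothing to prove, and in this case $\int_{\partial\o}\tau(x,\tr u)\,d\hno$ is finite), take $\{v_n\} \subset W^{1,1}(\o;\R^M) \cap C(\overline{\o};\R^M)$ with $v_n \to u$ strictly in $BV$ and $\tr v_n \to \tr u$ in $L^1(\partial\o;\R^M)$. Strict convergence gives $\sigma\int_{\o}|\nabla v_n|\,dx = \sigma|Dv_n|(\o) \to \sigma|Du|(\o)$, while, $\gamma$ being bounded on the compact set $\partial\o$, the Lipschitz bound \eqref{Lip-lip} yields $|\int_{\partial\o}\tau(x,\tr v_n)\,d\hno - \int_{\partial\o}\tau(x,\tr u)\,d\hno| \le \|\gamma\|_{L^\infty(\partial\o)}\|\tr v_n - \tr u\|_{L^1(\partial\o)} \to 0$. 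Hence $\f(v_n) \to \h(u)$, which is even stronger than needed.

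The heart of the matter is the lower bound. Let $\{u_n\} \subset W^{1,1}(\o;\R^M)$ with $u_n \to u$ in $L^1$; after passing to a subsequence I may assume $\lim_n \f(u_n) = \liminf_n \f(u_n) < \infty$. From \eqref{Lip-lip} and the linearity of the trace, $\f(u_n) - \h(u) \ge \sigma|Du_n|(\o) - \sigma|Du|(\o) - \int_{\partial\o}\gamma(x)|\tr u_n - \tr u|\,d\hno$, so everything rests on controlling the last term, and the key idea is to localize it to an arbitrarily thin boundary collar. Fix $t > 0$, set $\o_t := \{x \in \o : \dist(x, \partial\o) > t\}$, and choose $\psi_t \in C^\infty_c(\o; [0,1])$ with $\psi_t \equiv 1$ on $\o_t$. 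Applying \Cref{local-AG} with weight $s = \gamma/\sigma$ --- admissible precisely because of \eqref{Lip-gamma} --- to $v := (1 - \psi_t)(u_n - u) \in BV(\o;\R^M)$, whose trace equals $\tr u_n - \tr u$ since $1 - \psi_t \equiv 1$ near $\partial\o$, and estimating $|Dv|(\o)$ by Leibniz's rule, I obtain $\int_{\partial\o}\gamma|\tr u_n - \tr u|\,d\hno \le (1 - \e_0)\sigma|D(u_n - u)|(\o \setminus \o_t) + R_n^t$, where for fixed $t$ the remainder $R_n^t$ (which collects the $L^1(\o)$ error terms, with constants depending on $t$) tends to $0$ as $n \to \infty$.

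To conclude I would split $\o = \o_t \cup (\o \setminus \o_t)$ and bound $|D(u_n - u)|(\o \setminus \o_t) \le |Du_n|(\o \setminus \o_t) + |Du|(\o \setminus \o_t)$; the crucial feature is that the coefficient $1 - \e_0$ is strictly below $1$, so that the nonnegative term $\e_0\sigma|Du_n|(\o \setminus \o_t)$ may simply be discarded, leaving $\f(u_n) - \h(u) \ge \sigma|Du_n|(\o_t) - \sigma|Du|(\o) - (1 - \e_0)\sigma|Du|(\o \setminus \o_t) - R_n^t$. Taking $\liminf_n$ (by lower semicontinuity of the total variation on the open set $\o_t$) and then letting $t \to 0^+$, so that $|Du|(\o_t) \to |Du|(\o)$ and $|Du|(\o \setminus \o_t) \to 0$, produces $\liminf_n \f(u_n) \ge \h(u)$. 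The main obstacle is precisely this localization: one must carry the entire boundary trace onto a vanishingly thin collar so that the uncontrolled interior variation $|Du_n|(\o)$ is never pitted against the trace term. It is worth emphasizing that, in contrast with Step 1 of \Cref{liminf-C1pw}, the favorable constant $1 - \e_0 < 1$ renders the compactness estimate of \Cref{lem:cmpt} unnecessary here.
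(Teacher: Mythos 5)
Your argument is correct and is precisely the one the paper intends: the authors omit the proof of \Cref{Lip-dom}, pointing to \Cref{liminf-C2}, and your write-up carries out exactly that scheme with the only substitution the Lipschitz setting forces, namely Giusti's weighted trace inequality (\Cref{local-AG}, with weight $\gamma/\sigma$, admissible by \eqref{Lip-gamma}) in place of \Cref{trace-C2}, together with the direct use of \eqref{Lip-lip} instead of the Yosida-transform bound. Your localization via a cutoff $(1-\psi_t)$ rather than an auxiliary collar domain $V_\gamma$ is an immaterial variant, and your closing observation is accurate: the strict constant $1-\e_0<1$ lets you discard $\e_0\sigma|Du_n|(\o\setminus\o_t)$, so neither the uniform gradient bound of \Cref{lem:cmpt} nor the $\Lambda$-bookkeeping of \Cref{liminf-C1pw} is needed here; the limsup half via \Cref{smoothapprox} and the $L^\infty$ bound on $\gamma$ is likewise the expected one.
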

\begin{proof}
The proof follows from a nearly identical (but simpler) argument to that used in \Cref{liminf-C2} and therefore we omit it. 
%Let $\{u_n\}_{n \in \N} \subset W^{1,1}(\o; \R^M)$ be such that $u_n \to u$ in $L^1(\o; \R^M)$ for $u \in BV(\o; \R^M)$. Fix $\e > 0$ such that 
%\begin{equation}
%\label{sigma-gamma}
%\sigma \ge \tilde{\sigma} (Q_{\partial \o} + \e)
%\end{equation} 
%and let $V_m$ be an open subset of $\o$ such that $\partial \o \subset \partial V_m$ and $V_m \subset \{x \in \o : \dist(x, \partial \o) < 1/m\}$. Furthermore assume that $\partial V_m \setminus \partial \o$ is of class $C^1$, so that $Q_{\partial V_m} = Q_{\partial \o}$. Then 
%\begin{align*}
%\f(u_n) - \h(u) & = \sigma |Du_n|(\o) - \sigma |Du|(\o) + \int_{\partial \o}\tau(x, \tr u_n(x)) - \tau(x, \tr u(x))\,d\hno \\
%& \ge \sigma |Du_n|(\o) - \sigma |Du|(\o) - \tilde{\sigma} \int_{\partial \o} |\tr u_n(x) - \tr u(x)|\,d\hno \\
%& \ge \sigma |Du_n|(\o) - \sigma |Du|(\o) -\tilde{\sigma} (Q_{\partial \o} + \e)|D(u_n - u)|(V_m) \\
%& \qquad  - \tilde{\sigma} C(V_m, \e)\int_{V_m}|u_n(x) - u(x)|\,dx \\
%& \ge \sigma|Du_n|(\o \setminus V_m) - \sigma |Du|(\o \setminus V_m) + (\sigma - \tilde{\sigma} (Q_{\partial \o} + \e))|Du_n|(V_m) \\
%& \qquad - (\sigma + \tilde{\sigma} (Q_{\partial \o} + \e))|Du|(V_m) - \tilde{\sigma} C(V_m, \e)\int_{V_m}|u_n(x) - u(x)|\,dx.
%\end{align*}
%Thus, by \eqref{sigma-gamma} we obtain
%\[
%\liminf_{n \to \infty}\f(u_n) - \h(u) \ge - (\sigma + \tilde{\sigma} (Q_{\partial \o} + \e))|Du|(V_m),
%\]
%and the desired result follows by letting $m \to \infty$.
\end{proof}

\begin{remark}
\label{lip-rem}
Some comments on the assumptions of \Cref{Lip-dom} are in order.
\begin{itemize} 
\item[$(i)$] Condition \eqref{well-def-Lip} is only required so that $\f$ is well-defined. In addition, notice that if in \eqref{well-def-Lip} we could replace the positive constant $\beta$ with a continuous function $L$ as in \eqref{upperbound}, we would then also obtain that $\overline{\f}(u) = \infty$ for all $u \in L^1(\o; \R^M) \setminus BV(\o; \R^M)$ by the same argument used in \Cref{lem:cmpt}. In turn, this condition is readily satisfied if, for example, there exists $q \in \R^M$ such that $\tau(\cdot, q) \in L^{\infty}(\partial \o)$. Indeed, this would imply that for $\hno$-a.e.\@ $x \in \partial \o$ 
\[
\tau(x, p) \ge - \|\tau(\cdot, q)\|_{L^{\infty}(\partial \o)} - \gamma(x) |p - q| \ge - \|\tau(\cdot, q)\|_{L^{\infty}(\partial \o)} - \gamma(x) |q| - \gamma(x) |p|,
\]
and the desired result follows by \eqref{Lip-gamma}.
\item[$(ii)$] An analogous condition to \eqref{Lip-gamma} was previously identified by Giusti (see \cite{MR482506}) in the context of non-parametric surfaces of prescribed mean curvature, where it is customary to assume that $\tau(x, p) \coloneqq \gamma(x)p$, for $p \in \R$.
\item[$(iii)$] While on one hand \Cref{thm:main} already shows that a much finer result holds for more regular domains, it is worth remarking that a Lipschitz assumption of the form \eqref{Lip-gamma} is also not optimal in the case of a general Lipschitz domain. Indeed, in \cite{MR921549} Modica proved that for $\sigma = 1$, if we let $\tau(x,p) \coloneqq |p - \psi(x)|$, where $\psi \in L^1(\partial \o)$, then $\f$ is lower semicontinuous on any Lipschitz domain. 
\end{itemize}
\end{remark}

%%%%%%%%%%%%%%%%%%%%%%%%%%%%%%%%%%%%%%%%%%%%%%%%
%%%%%%%%%%%%%%%%%%%%%%%%%%%%%%%%%%%%%%%%%%%%%%%%
%%%%%%%%%%%%%%%%%%%%%%%%%%%%%%%%%%%%%%%%%%%%%%%%
%%%%%%%%%%%%%%%%%%%%%%%%%%%%%%%%%%%%%%%%%%%%%%%%

\subsection*{Acknowledgements}
The research of the first author has been supported by Grant Nos.\@ EP/R013527/1 and EP/R013527/2 ``Designer Microstructure via Optimal Transport Theory" of David Bourne during the period at Heriot-Watt University. The work of the second author was funded by the research support programs of Charles University under Grant Nos.\@ PRIMUS/19/SCI/01 and UNCE/SCI/023, and by the Czech Science Foundation (GA\v{C}R) under Grant No.\@ GJ19-11707Y. The authors would also like to thank Giovanni Leoni for his helpful insights.

%%%%%%%%%%%%%%%%%%%%%%%%%%%%%%%%%%%%%%%%%%%%%%%%
%%%%%%%%%%%%%%%%%%%%%%%%%%%%%%%%%%%%%%%%%%%%%%%%
%%%%%%%%%%%%%%%%%%%%%%%%%%%%%%%%%%%%%%%%%%%%%%%%
%%%%%%%%%%%%%%%%%%%%%%%%%%%%%%%%%%%%%%%%%%%%%%%%

\bibliographystyle{siam}
\bibliography{References}

\end{document}